\theoremstyle{plain}
  \newtheorem{thm}{Theorem}[section]
  \newtheorem{lem}[thm]{Lemma}
  \newtheorem{prop}[thm]{Proposition}
          \newtheorem*{prop*a}{Proposition A}
	\newtheorem*{thm*a}{Theorem A}
	\newtheorem*{thm*b}{Theorem B}
\theoremstyle{definition}
  \newtheorem{defn}[thm]{Definition}
  \newtheorem{rmk}[thm]{Remark}
  \newtheorem*{ack*}{Acknowledgement}
  \newtheorem*{ques*}{Question}
\theoremstyle{plain}
\numberwithin{equation}{section}
\newcommand\ip[2]{\langle{#1},{#2}\rangle}
\newcommand\RRM[4]{R({e}_{#1},{e}_{#2},{e}_{#3},{e}_{#4})}
\newcommand\RM[4]{{R}_{#1 #2 #3 #4}}
\newcommand\RTD[4]{{R}_{\tilde{#1} \tilde{#2} \tilde{#3} \tilde{#4}}}
\newcommand\hm[3]{{h}_{#1 #2 #3}}
\newcommand\hmd[4]{{h}_{#1 #2 #3 ; #4}}
\newcommand\htd[3]{\tilde{h}_{#1 #2 #3}}
\newcommand\OMTD[1]{\tilde{\Omega}_{#1}}
\newcommand\px[1]{\frac{\partial~}{\partial x^{#1}}}
\newcommand\py[1]{\frac{\partial~}{\partial y^{#1}}}
\newcommand\ltn[2]{|\!|#1|\!|_{H^{#2}}}
\newcommand\bn{\overline{\nabla}}
\newcommand\nt{\nabla^{\Gamma_t}}
\newcommand\nres{\tilde{\nabla}}
\newcommand\pt[1]{\partial_{\,\tilde{#1}}}
\newcommand\htdf[4]{\tilde{h}_{#1 #2 #3, #4}}
\newcommand\pl{\partial}
\newcommand\oh{\frac{1}{2}}
\newcommand\dd{{\mathrm d}}
\newcommand\w{\wedge}
\newcommand\sm{\sigma}
\newcommand\dt{\delta}
\newcommand\ep{\epsilon}
\newcommand\vep{\varepsilon}
\newcommand\vph{\varphi}
\newcommand\om{\omega}
\newcommand\ta{\theta}
\newcommand\gm{\gamma}
\newcommand\kp{\kappa}
\newcommand\af{\alpha}
\newcommand\bt{\beta}
\newcommand\Om{\Omega}
\newcommand\Sm{\Sigma}
\newcommand\Gm{\Gamma}
\newcommand\Ld{\Lambda}
\newcommand\CC{\mathcal{C}}
\newcommand\CO{\mathcal{O}}
\newcommand\CH{\mathcal{H}}
\newcommand\CN{\mathcal{N}}
\newcommand\CV{\mathcal{V}}
\newcommand\CA{\mathcal{A}}
\newcommand\CR{\mathcal{R}}
\newcommand\CS{\mathcal{S}}
\newcommand\CP{\mathcal{P}}
\newcommand\CT{\mathcal{T}}
\newcommand\CE{\mathcal{E}}
\newcommand\CF{\mathcal{F}}
\newcommand\RO{\mathrm{O}}
\newcommand\RSO{\mathrm{SO}}
\newcommand\BS{\mathbb{S}}
\newcommand\BCP{\mathbb{CP}}
\newcommand\BR{\mathbb{R}}
\newcommand\BN{\mathbb{N}}
\newcommand\fs{\mathfrak{s}}
\newcommand\fR{\mathfrak{R}}
\newcommand\ul{\underline}
\newcommand\td{\tilde}
\newcommand\ot{\otimes}
\newcommand\me{e}
\newcommand\te{\tilde{e}}
\newcommand\tom{\tilde{\om}}
\newcommand\namb{\nabla}
\newcommand\nsub{\nabla^\Sm}
\newcommand\nnor{\nabla^\perp}
\newcommand\Dt{\Delta^{\Gm_t}}
\newcommand\N{N}
\newcommand\bx{\mathbf{x}}
\newcommand\by{\mathbf{y}}
\newcommand\bp{\mathbf{p}}
\newcommand\bq{\mathbf{q}}
\newcommand\bs{\mathbf{s}}
\newcommand\bu{\mathbf{u}}
\newcommand\bv{\mathbf{v}}
\newcommand\bo{\mathbf{0}}
\DeclareMathOperator{\tr}{tr}
\DeclareMathOperator{\Hess}{Hess}
\DeclareMathOperator{\vol}{vol}
\DeclareMathOperator{\spn}{span}
\newcommand{\II}{{\mathrm I}\!{\mathrm I}}
\newcommand\IIt{\II^{t}}
\newcommand\Ric{\mathrm{Ric}}
\newcommand\dv{\mathrm{dvol}}
\begin{document}

\title{A strong stability condition on minimal submanifolds and its implications}

\author{Chung-Jun Tsai}
\address{Department of Mathematics\\
National Taiwan University\\ Taipei 10617\\ Taiwan}
\email{cjtsai@ntu.edu.tw}

\author{Mu-Tao Wang}
\address{Department of Mathematics\\
Columbia University\\ New York\\ NY 10027\\ USA}
\email{mtwang@math.columbia.edu}


\thanks{Supported in part by Taiwan MOST grants 105-2115-M-002-012, 106-2115-M-002-005-MY2 and NCTS Young Theoretical Scientist Award (C.-J.\ Tsai). This material is based upon work supported by the National Science Foundation under Grants No. DMS-1405152 and No. DMS-1810856 (Mu-Tao Wang).  Part of this work was carried out when Mu-Tao Wang was visiting the National Center of Theoretical Sciences at National Taiwan University in Taipei, Taiwan. }

\begin{abstract} We identify a strong stability condition on minimal submanifolds that implies uniqueness and dynamical stability properties. In particular, we prove a uniqueness theorem and a $\CC^1$ dynamical stability theorem of the mean curvature flow for minimal submanifolds that satisfy this condition. The latter theorem states that the mean curvature flow of any other submanifold in a $\CC^1$ neighborhood of such a minimal submanifold exists for all time, and converges exponentially to the minimal one. This extends our previous uniqueness and stability theorem \cite{ref_TsaiW} which applies only to calibrated submanifolds of special holonomy ambient manifolds. 

\end{abstract}

\maketitle


\section{Introduction}

In our previous work \cite{ref_TsaiW}, we study the uniqueness and $\CC^1$ dynamical stability of calibrated submanifolds in manifolds of special holonomy with explicitly constructed Riemannian metrics.  The result is extended to minimal submanifolds of general Riemannian manifolds in this paper.  The assumption for the uniqueness and dynamical stability theorem is identified as a strongly stable condition which implies the stability of the minimal submanifold in the usual sense of the second variation of the volume functional.  Recall that the mean curvature flow is the negative gradient flow of the volume functional.  It is thus natural to ask whether a local minimizer (a stable minimal submanifold) of the volume functional is stable under the mean curvature flow.  Such a question of great generality has been addressed in the celebrated work of L.\ Simon \cite{ref_Simon}: when is a local minimizer dynamically stable under the gradient flow, i.e.\ does the gradient flow of a small perturbation of a local minimizer still converge back to the local minimizer?  The question in the context of \cite{ref_Simon} concerns a nonlinear parabolic system defined on a compact manifold, and it was proved that the analyticity of the functional and the smallness in $\CC^2$ norm are sufficient for the validity of the dynamical stability.
The question we addressed here corresponds to the specialization to the volume functional of compact submanifolds.  A natural measurement of the distance between two submanifolds is the $\CC^1$ (or Lipschitz) norm, {in terms of which the ``closeness" condition of our current result is formulated.} \footnote{It was suggested by a reviewer that, since the volume functional is well-defined for varifolds, it is possible that some measure
theoretical ``closeness" condition for varifolds works for such a dynamical stability theorem. Indeed, a recent preprint by J. D. Lotay and F. Schulze ``Consequences of strong stability of minimal submanifolds" (arXiv: 1802.03941) generalized our result to the setting of integral currents under the enhanced Brakke flow.}

As derived in \cite[\S3]{ref_Simons}, the Jacobi operator of the second variation of the volume functional is $(\nnor)^*\nnor + \CR - \CA$, where $(\nnor)^*\nnor$ is the Bochner Laplacian of the normal bundle, $\CR$ is an operator constructed from the restriction of the ambient Riemann curvature, and $\CA$ is constructed from the second fundamental form.  The precise definition can be found in \S\ref{sec_stable}.  A minimal submanifold is said to be strongly stable if $\CR-\CA$ is a positive operator, see \eqref{sstable}.  Since $(\nnor)^*\nnor$ is a non-negative operator, strong stability implies stability in the sense of the second variation of the volume functional. In particular, the strong stability condition is satisfied by all the calibrated submanifolds considered in \cite{ref_TsaiW} which include ($M$ denotes the ambient Riemannian manifold and $\Sigma$ denotes the minimal submanifold):

\begin{enumerate}
\item $M$ is the total space of the cotangent bundle of a sphere, $T^*S^n$ (for $n>1$), with the Stenzel metric \cite{ref_St}, and $\Sigma$ is the zero section;

\item $M$ is the total space of the cotangent bundle of a complex projective space, $T^*\BCP^n$, with the Calabi metric \cite{ref_Calabi} and $\Sigma$ is the zero section;

\item $M$ is the total space of  one of the vector bundles $\BS(S^3)$, $\Lambda^2_-(S^4)$, $\Lambda^2_-(\BCP^2)$, and $\BS_-(S^4)$ with the Ricci flat metric constructed by Bryant--Salamon \cite{ref_BS}, where $\BS$ is the spinor bundle and $\BS_-$ is the spinor bundle of negative chirality, and $\Sigma$ is the zero section of the respective vector bundle.

\end{enumerate}
These are all metrics of special holonomy that are known to be written in a closed form, {and to have simplest non-trivial topology}. 
Note that in all these examples, the metrics of the total space are Ricci flat, and the zero sections are totally geodesic.  Hence, the strong stability in these examples is equivalent to the positivity of the operator $\CR$. In \cite{ref_TsaiW}, we proved uniqueness and dynamical stability theorems for the corresponding calibrated submanifolds and the proofs rely on the explicit knowledge of the ambient metric, whose coefficients are governed by solutions of ODE systems. A natural question was how general such rigidity phenomenon is. In this article, we discover that the strong stability condition is precisely the condition that makes everything work. 
Moreover, we identify more examples that satisfy the strong stability condition:

\begin{prop*a}

Each of the following pairs $(\Sigma, M)$ of minimal submanifolds $\Sigma$ and their ambient Riemannian manifolds $M$ satisfy the strong stability condition \eqref{sstable} :

\begin{enumerate}
\item $M$ is any Riemannian manifold of negative sectional curvature and $\Sigma$ a totally geodesic submanifold; {in particular, geodesics in hyperbolic surfaces or $3$-manifolds are stongly stable;}
\item $M$ is any K\"ahler manifold and $\Sigma$ is a complex submanifold whose normal bundle has positive holomorphic curvature.  
\item $M$ is any Calabi--Yau manifold and $\Sigma$ is a special Lagrangian with positive Ricci curvature;
\item $M$ is any $G_2$ manifold and $\Sigma$ is a coassociative submanifold with positive definite $-2W_- + \frac{s}{3}$ on $\Ld_-^2$; {this is a curvature condition on $g|_\Sm$, see \eqref{coa_stable}.}
\end{enumerate}

\end{prop*a}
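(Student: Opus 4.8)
The plan is to handle the four cases separately; the unifying idea is to rewrite the operator $\CR-\CA$ on $N\Sm$ in a form to which the stated curvature hypothesis applies directly. Case (1) is immediate: a totally geodesic $\Sm$ has vanishing second fundamental form, so $\CA\equiv 0$ and strong stability reduces to positivity of $\CR$; and by the definition in \S\ref{sec_stable}, for any orthonormal frame $\{e_i\}$ of $T\Sm$ and unit $V\in N\Sm$ one has $\langle\CR V,V\rangle=-\sum_i K^M(e_i\w V)\,|e_i\w V|^2>0$, because $e_i$ and $V$ are linearly independent and $M$ has negative sectional curvature. Geodesics in hyperbolic surfaces or $3$-manifolds are the special case $\dim\Sm=1$.

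For case (2) I would use that a complex submanifold of a K\"ahler manifold is minimal and that its normal bundle is naturally a Hermitian holomorphic vector bundle whose Chern connection is the one induced from the ambient metric. The defining relation $\II(JX,Y)=J\,\II(X,Y)$, equivalently $A_V\circ J=-J\circ A_V$ on $T\Sm$, combined with the K\"ahler symmetries of $R^M$, rewrites $\CR-\CA$ acting on $N^{1,0}\Sm$ as $\ii\,\Ld\,\Theta$, the trace of the curvature $\Theta$ of $N^{1,0}\Sm$ with respect to the K\"ahler form. (Equivalently: the complexified Jacobi operator on $N^{1,0}\Sm$ is $2\,\Box_{\bar\pl}$, and the Bochner--Kodaira--Nakano identity on $(0,0)$-forms gives $2\,\Box_{\bar\pl}=\nabla^*\nabla+\ii\,\Ld\,\Theta$.) Hence $\CR-\CA>0$ precisely when $\ii\,\Ld\,\Theta$ is a positive Hermitian endomorphism, which is guaranteed by the positive-holomorphic-curvature hypothesis on $N\Sm$.

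For cases (3) and (4) I would invoke McLean's deformation theory together with a Weitzenb\"ock formula. Here $\Sm$ is calibrated --- by $\re\Om$, resp.\ by $\ast\vph$ --- hence minimal, and $M$ is Ricci-flat. McLean's isomorphism identifies $N\Sm$ with $T^*\Sm$ (special Lagrangian case, via $V\mapsto(\iota_V\om)|_\Sm$), resp.\ with $\Ld^2_-T^*\Sm$ (coassociative case, via $V\mapsto(\iota_V\vph)|_\Sm$, for the appropriate orientation), in such a way that the induced metric and normal connection become the Riemannian metric and Levi-Civita connection on forms, and the second-variation operator $(\nnor)^*\nnor+\CR-\CA$ becomes the Hodge Laplacian $\dd\dd^*+\dd^*\dd$ on $1$-forms, resp.\ on anti-self-dual $2$-forms. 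Since $(\nnor)^*\nnor$ then corresponds to the rough Laplacian $\nabla^*\nabla$, the Weitzenb\"ock formula gives $\CR-\CA=\Ric_\Sm$ on $T^*\Sm$, resp.\ $\CR-\CA=-2W_-+\tfrac{s}{3}$ on $\Ld^2_-T^*\Sm$, and the hypotheses of (3) and (4) are precisely the positivity of these. One can also verify this directly in case (3): the Gauss equation with $\Ric_M=0$ gives $\langle\CR(JX),JX\rangle=\Ric_\Sm(X,X)+\sum_i|\II(X,e_i)|^2$, while full symmetry of $(X,Y,Z)\mapsto\langle\II(X,Y),JZ\rangle$ gives $|A_{JX}|^2=\sum_i|\II(X,e_i)|^2$, so the second fundamental form terms cancel.

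The routine part is the sign and normalization bookkeeping. I expect the main obstacle to be the operator identifications underlying (2)--(4): one must check that, under McLean's (or the holomorphic) bundle isomorphism, the normal connection $\nnor$ corresponds to the Levi-Civita (resp.\ Chern) connection --- so that $(\nnor)^*\nnor$ matches the rough Laplacian --- and that the full second-variation operator matches the relevant Laplacian rather than a perturbation of it. This is precisely where the parallelism of the ambient calibrating form, and the Ricci-flatness in cases (3) and (4), are used; granting these (essentially classical) identifications, the four conclusions follow from the Weitzenb\"ock and Bochner--Kodaira formulas and the elementary estimate in case (1).
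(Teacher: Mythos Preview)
Your proposal is correct. For (i), (ii), and the direct verification you sketch for (iii) at the end, you are doing exactly what the paper does: the paper dismisses (i) in one line, cites Simons for the identification in (ii) of $\CR-\CA$ with the normal-bundle curvature contracted against $\om_\Sm$ (your $\ii\Ld\Theta$), and in Appendix~\ref{apx_Lag} proves (iii) by precisely the Gauss-equation-plus-total-symmetry-of-$C_{kij}$ argument you give.

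Where you differ is in your primary route for (iii) and (iv): identifying the Jacobi operator with the Hodge Laplacian under McLean's bundle isomorphism and reading off $\CR-\CA$ as the Weitzenb\"ock curvature term. The paper instead carries out direct frame computations in both cases (Appendix~\ref{apx_stable}), using Ricci-flatness, the Gauss equation, and for (iv) the $G_2$ curvature identities~\eqref{G2_curv} together with the second-fundamental-form relations~\eqref{coa1}. The paper does remark, at the end of Appendix~\ref{apx_coa}, that for (iv) ``this equivalence can also be seen by combining \cite[Theorem~4.9]{ref_McLean} and the Weitzenb\"ock formula,'' so your route is legitimate there. What your approach buys is conceptual clarity and uniformity; what the direct computation buys is self-containment, since McLean's theorem identifies the \emph{moduli} of calibrated deformations with harmonic forms rather than the full Jacobi operator with the Hodge Laplacian. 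The latter is the stronger statement, and establishing it independently (for Lagrangians, via \cite{ref_Oh}; for coassociatives, one still has to check the connection and curvature-term identifications) is exactly the ``main obstacle'' you flag. The paper sidesteps that by computing $\CR-\CA$ directly.
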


For example (i), the strong stability can be checked directly. The examples (ii), (iii), and (iv) will be explained in \S\ref{sec_ss} and Appendix \ref{apx_stable}.

We now state the main results of this paper.  The first one says that a strongly stable minimal submanifold is rather unique.

\begin{thm*a}
Let $\Sm^n\subset (M,g)$ be a compact, minimal submanifold which is strongly stable in the sense of \eqref{sstable}.  Then there exists a tubular neighborhood $U$ of $\Sm$ such that $\Sm$ is the only compact minimal submanifold in $U$ with dimension no less than $n$.
\end{thm*a}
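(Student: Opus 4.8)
The plan is to use the tubular neighborhood of $\Sm$ as a normal coordinate chart and test candidate minimal submanifolds against it. Let $\pi : U \to \Sm$ be the projection of a tubular neighborhood, realized via the normal exponential map, and let $r$ denote the distance to $\Sm$. The guiding idea is that the function $r^2$ (or a suitable modification of it) is strictly subharmonic — in the sense of the trace of its Hessian over any $n$-plane — on $U \setminus \Sm$, as a consequence of the strong stability condition \eqref{sstable}. If $\Sm' \subset U$ is another compact minimal submanifold of dimension $k \ge n$, then restricting such a function to $\Sm'$ produces a subharmonic function on a closed manifold, forcing it to be constant, hence $\Sm' = \Sm$.

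The key steps, in order. \emph{Step 1:} Set up Fermi (normal) coordinates around $\Sm$ and expand the ambient metric to second order in the normal directions; the coefficients of the quadratic terms are exactly the second fundamental form of $\Sm$ and the restricted ambient curvature, so the operator $\CR - \CA$ appears naturally. \emph{Step 2:} Compute, for a unit normal position vector field, the Hessian of $r^2$ along directions tangent to a competitor submanifold, keeping track of the contributions: the flat part contributes $2$ per tangential direction, and the corrections are governed by $\CR - \CA$ acting on the normal component together with the second fundamental form of the competitor. \emph{Step 3:} Because the competitor $\Sm'$ is minimal, the trace over its tangent space of its own second fundamental form term drops out when one computes the Laplacian on $\Sm'$ of the restricted function; what survives is a term controlled from below by the strong stability quantity, so after possibly shrinking $U$ the restricted function is strictly subharmonic on $\Sm' \setminus \Sm$. \emph{Step 4:} A strictly subharmonic function on a compact manifold without boundary attains its maximum, contradiction unless $\Sm' \setminus \Sm = \emptyset$; combined with the dimension hypothesis $k \ge n$ and $\Sm' \subset U$, conclude $\Sm' = \Sm$.

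The main obstacle is \emph{Step 2--3}: one must verify that the strong positivity of $\CR - \CA$ on the normal bundle of $\Sm$ really does dominate all the error terms that arise when the competitor $\Sm'$ is tilted relative to the fibers of $\pi$, not just normal to $\Sm$ along a single fiber. In particular the Hessian of $r^2$ involves the full ambient second fundamental form of the geodesic spheres/tubes, and one needs a clean algebraic inequality showing that, for any $n$-plane in $T_pM$ with $p$ close to $\Sm$, the trace of $\Hess(r^2)$ over that plane is bounded below by a positive multiple of $r^2$ — uniformly. This is where the precise form of \eqref{sstable} (positivity of $\CR - \CA$ as an operator, with a definite spectral gap by compactness of $\Sm$) must be invoked, and it is plausible that $r^2$ itself needs to be replaced by a weighted version such as $r^2 e^{-c\, r^2}$ or by $f(r^2)$ for a carefully chosen convex $f$, to absorb the cross terms. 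I would also need the elementary fact that the dimension constraint $k \ge n$ is used only to guarantee the trace is taken over enough directions; the argument in fact works verbatim whenever the competitor has dimension $\ge n$, which is exactly the stated generality.
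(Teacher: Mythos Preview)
Your approach is correct and is essentially the paper's: Proposition~\ref{prop_cvx} establishes precisely the inequality you describe, $\tr_L\Hess(\psi)\ge c\big(\fs(L)^2+\psi\big)$ for every $n$-plane $L$ in a tube around $\Sm$ (and no weighted modification of $\psi=r^2$ turns out to be needed), after which Theorem~\ref{uniqueness} runs the maximum-principle argument exactly as in your Step~4. Two small corrections worth noting: the ``flat part'' of $\Hess(\psi)$ is $2$ only along \emph{vertical} directions and vanishes to zeroth order along horizontal ones --- it is the minimality of $\Sm$ (not of the competitor) that kills the linear-in-$y$ horizontal term, and strong stability then makes the quadratic remainder positive; and the case $k>n$ is not quite verbatim, since one uses that the extra $k-n$ tangent directions of $\Gm$ are forced into the vertical distribution $\CV$ (because $\dim\CH=n$), where each contributes $\approx 2$ to the trace by \eqref{Hess_vv}.
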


The second one is on the dynamical stability of a strongly stable minimal submanifold.

\begin{thm*b}
Let $\Sm\subset (M,g)$ be a compact, oriented minimal submanifold which is strongly stable in the sense of \eqref{sstable}.  If $\Gm$ is a submanifold that  is close to $\Sm$ in $\CC^1$, the mean curvature flow $\Gm_t$ with $\Gm_0 = \Gm$ exists for all time, and $\Gm_t$ converges to $\Sm$ smoothly as $t\to\infty$.
\end{thm*b}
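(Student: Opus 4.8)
The plan is to write the mean curvature flow $\Gm_t$ as a graph over $\Sm$ inside a tubular neighborhood, reduce the flow to a quasilinear parabolic system for a section $V_t$ of the normal bundle $N\Sm$, and then run a standard energy/continuity argument whose linearization is controlled by the strong stability condition \eqref{sstable}. First I would fix the tubular neighborhood $U$ from Theorem A (or a possibly smaller one) and use the normal exponential map to identify a $\CC^1$-neighborhood of $\Sm$ with the space of $\CC^1$-small sections $V$ of $N\Sm$; writing $\Gm = \operatorname{graph}(V_0)$, the mean curvature flow becomes $\partial_t V = \CF(V,\nabla^\perp V,(\nabla^\perp)^2 V)$ where $\CF$ is quasilinear, $\CF(0)=0$ (since $\Sm$ is minimal), and the linearization of $\CF$ at $V=0$ is (up to tangential reparametrization) minus the Jacobi operator $L = (\nnor)^*\nnor + \CR - \CA$.

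The heart of the matter is the a priori estimate. Here I would exploit that strong stability gives $\CR - \CA \ge \lambda_0 >0$ as an operator on sections of $N\Sm$, hence $L \ge \lambda_0$. The key step: show that along the flow, as long as $\Gm_t$ stays $\CC^1$-close to $\Sm$, one has a differential inequality of the form
\begin{equation*}
\frac{\dd}{\dd t}\,\|V_t\|_{L^2}^2 \le -2\lambda_0 \|V_t\|_{L^2}^2 + (\text{higher order error})\cdot\|V_t\|_{L^2}^2,
\end{equation*}
where the higher-order error is controlled by the $\CC^1$-norm of $V_t$ (the quadratic and higher terms in $\CF$, plus the discrepancy between the actual flow operator and its linearization at $0$). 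Combined with parabolic smoothing estimates — using that the principal part stays uniformly parabolic in the $\CC^1$-regime — one upgrades $L^2$-smallness to $\CC^1$-smallness (and then to all higher $\CC^k$ via interior Schauder/bootstrap), so the $\CC^1$-closeness is preserved for all time once $\|V_0\|_{\CC^1}$ is small enough. This is the bootstrap/continuity scheme: let $T$ be the maximal time on which $\|V_t\|_{\CC^1} \le \epsilon$; the energy inequality forces exponential decay $\|V_t\|_{L^2} \le e^{-\lambda_0 t}\|V_0\|_{L^2}$, the smoothing estimates then show $\|V_t\|_{\CC^1}$ is actually strictly smaller than $\epsilon$ for all $t<T$, contradicting maximality unless $T=\infty$.

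The main obstacle I anticipate is the interplay between the quasilinear (rather than semilinear) nature of the flow and the need to control the nonlinear error terms purely by the $\CC^1$-norm — in \cite{ref_Simon} the analogous step required $\CC^2$-closeness and analyticity, so improving to $\CC^1$ demands a more careful treatment: one must choose the ``gauge'' (the tangential component of the deformation) so that the resulting PDE for $V_t$ has its lower-order nonlinearities genuinely of the form $\CC^0(\text{coeffs depending on }V,\nabla^\perp V)\cdot (\nabla^\perp)^2 V + (\text{quadratic})$, and then absorb the bad terms using the strict positivity $\lambda_0>0$ together with interpolation inequalities $\|\nabla^\perp V\|^2 \le C\|V\|\,\|(\nabla^\perp)^2 V\|$. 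A secondary technical point is short-time existence for merely $\CC^1$ initial data, which I would handle by first smoothing the initial submanifold (the flow instantly regularizes) or by invoking the local existence theory for graphical mean curvature flow with Lipschitz initial data; once past time $\delta>0$ the solution is smooth and the energy machinery applies. Finally, smooth convergence $\Gm_t \to \Sm$ as $t\to\infty$ follows from the exponential $L^2$-decay plus uniform $\CC^k$-bounds via interpolation.
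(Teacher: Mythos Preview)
Your approach is genuinely different from the paper's, and the difference matters for the strength of the conclusion. You reduce to a quasilinear parabolic system for a section $V_t$ of $N\Sm$ and run an $L^2$--energy / continuity argument whose only input from strong stability is the \emph{spectral} bound $L = (\nnor)^*\nnor + \CR - \CA \geq \lambda_0$. But that bound is exactly the (strict) stability condition, not the strong stability condition, and the paper explicitly discusses in \S\ref{only_stable} that under mere stability the energy method of Naito \cite{ref_Naito} yields dynamical stability only for initial data close in $H^r$ with $r > \tfrac{n}{2}+2$, hence in $\CC^{2,\af}$, not in $\CC^1$. The step in your outline where this bites is ``upgrade $L^2$-smallness to $\CC^1$-smallness via parabolic smoothing'': Sobolev embedding eats $> n/2$ derivatives, so to land in $\CC^1$ from an $L^2$--level estimate you must already control more than $n/2+1$ derivatives, which your bootstrap does not provide for merely $\CC^1$--close initial data. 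Your remark that this is ``the main obstacle'' is correct, but the proposed fixes (gauge choice, interpolation $\|\nnor V\|^2 \le C\|V\|\,\|(\nnor)^2 V\|$) do not close the gap: you still need a priori control of the second fundamental form to make the smoothing constants uniform, and the evolution equation for $|\II^{\Gm_t}|^2$ has a genuine quartic term that an $L^2$ argument alone does not tame.

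What the paper does instead is use strong stability \emph{pointwise} via the maximum principle, which is precisely what the extra hypothesis buys. It extends the volume form of $\Sm$ to an $n$-form $\Om$ on a tubular neighborhood and works with the scalar quantities $\psi = \sum (y^\af)^2$ and $*\Om$ restricted to $\Gm_t$. Proposition~\ref{prop_cvx} (convexity of $\psi$, using \eqref{sstable} pointwise) gives $(\tfrac{\dd}{\dd t} - \Dt)\psi \leq -c(\psi + \fs^2)$, and careful estimates on $\nabla\Om$, $\nabla^2\Om$ (Lemmas~\ref{lem_cov1}--\ref{lem_cov*}) turn the evolution equation \eqref{*Omega} into $(\tfrac{\dd}{\dd t} - \Dt)(*\Om) \geq \tfrac{1}{3}(*\Om)|\II^{\Gm_t} - \II^\Sm|^2 - c(\fs^2 + \psi)$. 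These combine via the maximum principle to keep $1 - *\Om + c_6\psi$ non-increasing, giving direct $\CC^0$ and $\CC^1$ control with no Sobolev loss. The quartic blow-up in $|\II^{\Gm_t}|^2$ is then absorbed by coupling with $(*\Om)^p$ for large $p$ (the good term $\tfrac{p}{4}\eta |\II^{\Gm_t} - \II^\Sm|^2$ in \eqref{evol4} beats the bad $c_8|\II^{\Gm_t}|^4$), again by maximum principle. Only after long-time existence and $\CC^1$ convergence are secured does the paper pass to integral estimates and a Moser iteration (Appendix~\ref{apx_Moser}) for $\CC^2$ and smooth convergence. In short: your scheme is the natural functional-analytic one, but it uses only the spectral shadow of \eqref{sstable} and so reproduces Theorem~\ref{stable}, not Theorem~\ref{longtime}; the paper's maximum-principle route is what exploits the pointwise positivity of $\CR - \CA$ to get away with $\CC^1$ closeness.
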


The precise statements can be found in Theorem \ref{uniqueness} (Theorem A) and Theorem \ref{longtime} (Theorem B), respectively.
{For defining a measurement for the slope, the minimal submanifold $\Sm$ in Theorem B is required to be oriented.  The slope measurement is based on certain extension of the volume form of $\Sm$ (see \S\ref{sec_linear}).}

The $\CC^1$ dynamical stability of the mean curvature flow for those calibrated submanifolds considered in \cite{ref_TsaiW} was proved in the same paper.  In this regard, this theorem is a generalization of our previous result.

Here are some remarks on the strong stability condition.  In the viewpoint of the second variational formula, the condition is natural, and is stronger than the positivity of the Jacobi operator.  The main results of this paper are basically saying that the strong stability has nice geometric consequences.
In particular, the minimal submanifold $\Sm$ needs not be totally geodesic, while most known results about the convergence of higher codimensional mean curvature flow are under the totally geodesic assumption, e.g. \cite{ref_W3}.

\smallskip

{{\bf Note addded.}
One may wonder whether the stability condition already implies the dynamical stability.  More precisely, if the Jacobi operator has only positive spectrum, is the minimal submanifold $\Sm^n$ stable under the mean curvature flow?  The answer is yes, provided one requires more on the initial condition.  This was studied by Naito \cite{ref_Naito} for general negative gradient flows, and by Deckelnick \cite{ref_Dkk} for surface mean curvature flows in $\BR^3$ with Dirichlet boundary condition.  The result of Naito says that if $\Gm$ is close to $\Sm$ in $H^r (= L^2_r)$ for $r>\frac{n}{2}+2$, then the mean curvature flow $\Gm_t$ exists for all time, and converges to $\Sm$ in $H^r$ as $t\to\infty$.  Since $r>\frac{n}{2}+2$, $H^r\hookrightarrow \CC^{2,\af}$ for some $\af\in(0,1]$, and $\Gm$ is close to $\Sm$ in $\CC^{2,\af}$.
Section \ref{only_stable} is added to explain more on the results of Naito.

}

\begin{ack*}
The authors are grateful to Prof.\ Gerhard Huisken for his comments on the stability of the mean curvature flow and for pointing out the reference \cite{ref_Dkk}.  The authors would like to thank Yohsuke Imagi for helpful discussions, and to thank the anonymous referee for helpful comments on the earlier version of this paper.
\end{ack*}

\section{Local geometry near a submanifold}\label{sec_local}

\subsection{Notations and basic properties}\label{sec_notation}

Let $(M, g)$ be a Riemannian manifold of dimension $n+m$, and $\Sm\subset M$ be a compact (embedded) submanifold of dimension $n$. We use $\langle \cdot, \cdot  \rangle$ to denote the evaluation of two tangent vectors by the metric tensor $g$. The notation $\langle \cdot, \cdot  \rangle$ is also abused to denote the evaluation with respect to the induced metric on $\Sigma$. Denote by $\namb$ the Levi-Civita connection of $(M,g)$, and by $\nsub$ the Levi-Civita connection of the induced metric on $\Sigma$.

Denote by $\N\Sm$ the normal bundle of $\Sm$ in $M$.  The metric $g$ and its Levi-Civita connection induce a bundle metric (also denoted by $\langle \cdot, \cdot  \rangle$ ) and a metric connection for $\N\Sm$.  The bundle connection on $N\Sigma$ will be denoted by $\nnor$.

In the following discussion, we are going to choose a local orthonormal frame $\{\me_1 ,\cdots,\me_n,$ $\me_{n+1},\cdots,\me_{n+m}\}$ for $TM$ near a point $p\in\Sm$ such that the restriction of $\{\me_1,\cdots,\me_n\}$ on $\Sm$ is a frame for $T\Sm$ and the restrictions of $\{\me_{n+1},\cdots,\me_{n+m}\}$ is a frame for $\N\Sm$.  The indexes $i, j, k$ range from $1$ to $n$, the indexes $\af, \bt, \gm$ range from $n+1$ to $n+m$, the indexes $A, B, C$ range from $1$ to $n+m$, and repeated indexes are summed.

The convention of the Riemann curvature tensor is
\begin{align*}
R(e_C,e_D)e_B &= \namb_{\me_C}\namb_{\me_D}\me_B - \namb_{\me_D}\namb_{\me_C}\me_B - \namb_{[\me_C,\me_D]}\me_B ~, \\
\RM{A}{B}{C}{D} &= \RRM{A}{B}{C}{D} = \ip{R(e_C,e_D)e_B}{\me_A} ~.
\end{align*}
What follows are some basic properties of the geometry of a submanifold.  The details can be found in, for example \cite[ch. 6]{ref_doCarmo}.
\begin{enumerate}
\item $\nsub$ is the projection of $\namb$ onto $T\Sm\subset TM|_\Sm$, and $\nnor$ is the projection of $\namb$ onto $\N\Sm\subset TM|_\Sm$.  Their curvatures are denoted by
\begin{align*}
\RM{\;k}{l}{i}{j}^\Sm &= \ip{\nsub_{\me_i}\nsub_{\me_j}\me_l - \nsub_{\me_j}\nsub_{\me_i}\me_l - \nsub_{[\me_i,\me_j]}\me_l}{\me_k} ~, \\
\RM{\;\af}{\bt}{i}{j}^\perp &= \ip{\nnor_{\me_i}\nnor_{\me_j}\me_\bt - \nnor_{\me_j}\nnor_{\me_i}\me_\bt - \nnor_{[\me_i,\me_j]}\me_\bt}{\me_\af} ~,
\end{align*}
\item Given any two tangent vectors $X, Y$ of $\Sm$, the \emph{second fundamental form} of $\Sm$ in $M$ is defined by $\II(X,Y) = (\namb_X Y)^\perp$, where $(\cdot)^\perp: TM\rightarrow N\Sigma$ is the projection onto the normal bundle.   The \emph{mean curvature} of $\Sm$ is the normal vector field defined by $H = \tr_\Sm\II$.  With a normal vector $V$, $\II(X,Y,V)$ is defined to be $\ip{\II(X,Y)}{V} = \ip{\namb_X Y}{V}$.  In terms of the frame,
\begin{align*}
\hm{\af}{i}{j} = \II(\me_i,\me_j,\me_\af)  \quad\text{and}\quad  H = \hm{\af}{i}{i}\,\me_\af ~.
\end{align*}
\item For any tangent vectors $X,Y,Z$ of $\Sm$ and a normal vector $V$, the Codazzi equation says that
\begin{align}
\ip{R(X,Y)Z}{V} &= (\namb_X\II)(Y,Z,V) - (\namb_Y\II)(X,Z,V)
\label{Codazzi1} \end{align}
where
\begin{align}
(\namb_X\II)(Y,Z,V) &= X\left( \II(Y,Z,V) \right) - \II(\nsub_X Y,Z,V) - \II(Y, \nsub_X Z,V) - \II(X,Y,\nnor_X V) ~.
\label{Codazzi2} \end{align}
In terms of the frame, denote $(\namb_{\me_i}\II)(\me_j,\me_k,\me_\af)$ by $\hmd{\af}{j}{k}{i}$, and \eqref{Codazzi1} is equivalent to that $\RM{\af}{k}{i}{j} = \hmd{\af}{j}{k}{i} - \hmd{\af}{i}{k}{j}$.
\end{enumerate}

\subsection{Geodesic coordinate and geodesic frame}\label{sec_coordinate}

For any $p\in\Sm$, we can construct a ``partial" geodesic coordinate and a geodesic frame on a neighborhood of $p$ in $M$ as follows:

\begin{enumerate}
\item Choose an oriented, orthonormal basis $\{\me_1,\cdots,\me_n\}$ for $T_p\Sm$.  The map
\begin{align*}
F_0: \bx = (x^1,\cdots, x^n) &\mapsto \exp^\Sm_p(x^j\me_j)
\end{align*}
parametrizes an open neighborhood of $p$ in $\Sm$, where $\exp^\Sm$ is the exponential map of the induced metric on $\Sm$.  For any $\bx$ of unit length, the curve $\gm(t) = F_0(t\bx)$ is called a \emph{radial geodesic on $\Sm$} (\emph{at $p$}).  By using $\nsub$ to parallel transport $\{\me_1,\cdots,\me_n\}$ along these radial geodesics, we get a local orthonormal frame for $T\Sm$ on a neighborhood of $p$ in $\Sm$.  The frame is still denoted by $\{\me_1,\cdots, \me_n\}$.
\item Choose an orthonormal basis $\{\me_{n+1},\cdots,\me_{n+m}\}$ for $\N_p\Sm$.  By using $\nnor$ to parallel transport $\{\me_{n+1},\cdots,\me_{n+m}\}$ along radial geodesics on $\Sm$, we obtain a local orthonormal frame for $\N\Sm$ on a neighborhood of $p$ in $\Sm$.  This frame is still denoted by $\{\me_{n+1},\cdots,\me_{n+m}\}$. It is clear that $\{\me_1,\cdots,\me_n,$ $\me_{n+1},\cdots,\me_{n+m}\}$ is a local orthonormal frame for $TM|_\Sm$.
\item The map
\begin{align*}
F: (\bx,\by) = \big((x^1,\cdots, x^n), (y^{n+1},\cdots,y^{n+m})\big) &\mapsto \exp_{F_0(\bx)}(y^\af \me_{\af})
\end{align*}
parametrizes an open neighborhood of $p$ in $M$.  The map $\exp$ is the exponential map of $(M,g)$.  For any $\by$ of unit length, the curve $\sm(t) = F(\bx,t\by) = \exp_{F_0(\bx)}(t\by)$ is called a \emph{normal geodesic for $\Sm\subset M$}.
\item For any $\bx$, step (ii) gives an orthonormal basis $\{\me_1,\cdots,\me_{n+m}\}$ for $T_{F(\bx,0)}M$.  By using $\namb$ to parallel transport it along normal geodesics, we have an orthonormal frame for $TM$ on a neighborhood of $p$ in $M$.  This frame is again denoted by $\{\me_1,\cdots,\me_{n+m}\}$.
\end{enumerate}
The freedom in the above construction is the choice of $\{\me_1,\cdots,\me_n\}$ and $\{\me_{n+1},\cdots,\me_{n+m}\}$ at $p$, which is $\RO(n)\times\RO(m)$.  A particular choice will be made later on.  {When $\Sm$ is \emph{oriented}, $\{\me_1,\cdots,\me_n\}$ is required to form an oriented frame.  In this case, the freedom is $\RSO(n)\times\RO(m)$.}

\begin{rmk}
We will consider the curves $s \mapsto\exp_p^\Sm(x^i\me_i+s\me_j)$ and $s \mapsto \exp_{F_0(\bx)}(y^\bt\me_\bt + s\me_\af)$ in the following discussion.  They will be abbreviated as $F_0(\bx+s\me_j)$ and $F(\bx,\by+s\me_\af)$, respectively.
\end{rmk}

\begin{rmk}
The frames $\{e_1,\cdots,e_n,e_{n+1},\cdots,e_{n+m}\}$ are constructed by parallel transport along radial geodesics on $\Sm$ and then normal geodesic for $\Sm$.  They are indeed {smooth}.  We briefly explain the smoothness of $\{e_1,\cdots,e_n\}$ on a neighborhood of $p$ in $\Sm$.  Write $e_i = S_{ij}(\bx)\px{j}$.  The smoothness of the frame is equivalent to the smoothness of $S_{ij}(\bx)$.  Let $\Gm_{jk}^l(\bx)$ be the Christoffel symbols of $\nsub$, i.e. $\nsub_{\px{j}}{\px{k}} = \Gm_{jk}^l(\bx)\px{l}$.  The Christoffel symbols $\Gm_{jk}^l(\bx)$ are smooth functions.  Since $e_i$ is parallel along radial geodesics,
\begin{align*}
\nsub_{x^l\px{l}}e_i = 0 = \left( x^l\frac{\pl\,S_{ij}(\bx)}{\pl x^l} + x^l S_{ik}(\bx)\Gm_{ik}^j(\bx) \right) \px{j} ~.
\end{align*}
To avoid confusion, fix $\xi = (\xi^1,\cdots,\xi^n)\in\BR^n$.  Let $\gm(t) = t\xi$ for $t\in[0,1]$.  Since $\frac{\dd}{\dd t}f(\gm(t)) = \frac{1}{t}(x^l\frac{\pl}{\pl x^l}f(\bx))|_{\gm(t)}$,
\begin{align*}
\frac{\dd S_{ij}(t\xi)}{\dd t} &= \xi^l\,S_{ik}(t\xi)\,\Gm_{ik}^j(t\xi) ~.
\end{align*}
In other words, $[S_{ij}(\xi)]$ is the solution to the ODE system of the form $\frac{\dd S}{\dd t} = F(S,t,\xi)$ at $t=1$, with the identity as the initial condition.  Therefore, $S_{ij}(\xi)$ is smooth in $\xi$.
\end{rmk}

\subsubsection{The tubular neighborhood $U_\vep$ and the distance function}\label{sec_distance}

\begin{defn} \label{u_epsilon}
For any $\dt>0$, let $U_{\dt}$ be the image of $\{V\in\N\Sm~|~|V|<\dt\}$ under the exponential map along $\Sigma$.
By the implicit function theorem, there exists $\vep>0$, which is determined by the geometry of $\Sm$ and $M$, such that the following statements hold for $U_\vep$:

(1) The map $\exp:\{V\in\N\Sm~|~|V|<2\vep\}\to U_{2\vep}$ is a diffeomorphism. 

(2) There exist the local coordinate system $(x^1, \cdots x^n, y^{n+1}, \cdots y^{n+m})$ and the frame $ \{ e_1, \cdots, e_{n+m}\}$ constructed in the last subsection.

(3) The function $\sum_{\af}(y^\af)^2$ is a well-defined smooth function on $U_{\vep}$. 

(4) On $U_{\vep}$, the square root of $\sum_{\af}(y^\af)^2$ is the distance function to $\Sm$.

(5) For any $q\in U_\vep$, there exists a unique $p\in\Sm$ such that there is a unique normal geodesic in $U_\vep$ connecting $p$ and $q$.
\end{defn}

We now analyze the gradient of the function $\sum_{\af}(y^\af)^2$.  To avoid confusion, let
\begin{align*}
\xi=(\xi^1,\cdots,\xi^n)\in\BR^n  \qquad\text{and}\qquad  \eta = (\eta^{n+1},\cdots,\eta^{n+m})\in\BR^m
\end{align*}
be constant vectors.  Consider the normal geodesic $\sm(t) = F(\xi, t\eta)$; its tangent vector field is $\sm'(t) = \eta^\af\py{\af}$.  On the other hand, $\sm'(0)$ is also equal to $\eta^\af\me_\af$, and $\eta^\af\me_\af$ is defined and parallel along $\sm(t)$.  Thus, $\eta^\af\py{\af} = \eta^\af e_\af$ on $\sm(t)$.  Since the $y$-coordinate of $\sm(t)$ is $t\eta$, we find that
\begin{align}
y^\af\py{\af}\big|_{\sm(t)} &= t\eta^\af\py{\af}\big|_{\sm(t)} = t\eta^\af\,e_{\af} = t\,\sm'(t) ~; \label{tangent}
\end{align}
at $t=1$ it gives
\begin{align}
y^\af\py{\af} &= y^\af\me_\af ~. \label{grad1}
\end{align}

By modifying the standard geodesic argument \cite[p.4--9]{ref_CB}, the vector field $y^\af\py{\af}|_{\sm(t)}$ is half of the gradient vector field of $\sum_\af(y^\af)^2$.  In addition, note that \eqref{grad1} implies that $\ip{y^\af\py{\af}}{y^\af\py{\af}} = \sum_\af(y^\af)^2$.  The Gauss lemma implies that $\ip{y^\af\py{\af}}{s^\bt \py{\bt}} = 0$ if $\sum_{\af}y^\af s^\af = 0$.  By considering the first variational formula of the one-parameter family of geodesics $\sm(t,s) = \exp_{F_0(\xi+s\me_j)}(t\eta)$, one finds that $\ip{y^\af \py{\af}}{\px{j}} = 0$.  It follows from these relations that
\begin{align}
\namb\left(\sum_\af(y^\af)^2\right) = 2y^\af\py{\af} ~.
\label{grad2} \end{align}

For a locally defined smooth function near $p$, the following lemma establishes its expansion in terms of the coordinate system constructed above.

\begin{lem} \label{Taylor}
Let $U_\vep$ be a neighborhood of $p\in\Sm$ in $M$ as in Definition \ref{u_epsilon} with the coordinate system $(\bx,\by)=(x^1, \cdots x^n, y^{n+1}, \cdots y^{n+m})$ and the frame $\{e_1, \cdots, e_n, e_{n+1}, \cdots e_{n+m}\}$.  Then, any smooth function $f(\bx,\by)$ on $U_\vep$ has the following expansion:
\begin{align*}
f(\bx,\by) = f(\bo,\bo) + x^i\,e_i(f)|_p + y^\af\,e_\af(f)|_p + \CO(|\bx|^2 + |\by|^2) ~.
\end{align*}
More precisely, it means that $\left| f(\bx,\by) - f(\bo,\bo) - x^i\,e_i(f)|_p - y^\af\,e_\af(f)|_p \right| \leq c(|\bx|^2 + |\by|^2)$ for some constant $c$ determined by the $\CC^2$-norm of $f$ and the geometry of $M$ and $\Sm$.
\end{lem}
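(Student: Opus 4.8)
The plan is to reduce the statement to the classical multi-variable Taylor theorem with remainder, applied in the coordinate chart $(\bx,\by)$, and then to identify the first-order coefficients with the frame derivatives $e_i(f)|_p$ and $e_\af(f)|_p$. The starting point is that $F:(\bx,\by)\mapsto \exp_{F_0(\bx)}(y^\af e_\af)$ is a smooth diffeomorphism from a neighborhood of the origin in $\BR^{n+m}$ onto $U_\vep$ (Definition \ref{u_epsilon}(1)--(2)), so $f\circ F$ is a genuine smooth function of $(\bx,\by)$ on a convex neighborhood of $\bo$. Taylor's theorem with the integral (or Lagrange) form of the remainder then gives
\begin{align*}
(f\circ F)(\bx,\by) = (f\circ F)(\bo) + x^i\,\partial_{x^i}(f\circ F)|_{\bo} + y^\af\,\partial_{y^\af}(f\circ F)|_{\bo} + R(\bx,\by) ~,
\end{align*}
where $|R(\bx,\by)| \le c\,(|\bx|^2 + |\by|^2)$ with $c$ controlled by the $\CC^2$-norm of $f\circ F$ on the chart, hence by the $\CC^2$-norm of $f$ together with the $\CC^2$-geometry of the chart map $F$ (which in turn is determined by the geometry of $M$ and $\Sm$). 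Here $(f\circ F)(\bo) = f(\bo,\bo) = f(p)$.

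The remaining point is to check that $\partial_{x^i}(f\circ F)|_{\bo} = e_i(f)|_p$ and $\partial_{y^\af}(f\circ F)|_{\bo} = e_\af(f)|_p$; equivalently, that $\px{i}|_p = e_i|_p$ and $\py{\af}|_p = e_\af|_p$ as tangent vectors of $M$ at $p$. For the normal directions this is exactly \eqref{grad1}: at $\by = \xi\eta$ with $\xi = 0$ one has $y^\af\py{\af} = y^\af e_\af$, and differentiating in each $\eta^\af$ (equivalently, evaluating along the normal geodesic $t\mapsto F(\bo,t e_\af)$ whose velocity at $t=0$ is $e_\af$ by construction in step (iii)--(iv) of \S\ref{sec_coordinate}) shows $\py{\af}|_p = e_\af|_p$. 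For the tangential directions, the curve $t\mapsto F(t e_i,\bo) = \exp^\Sm_p(t e_i)$ is the radial geodesic on $\Sm$ through $p$ with velocity $e_i$ at $t=0$; since $F(\bx,\bo) = F_0(\bx)$ lies in $\Sm$ and $\px{i}$ is its coordinate vector field, $\px{i}|_p$ equals this velocity, which is $e_i|_p$ (the frame $\{e_1,\dots,e_n\}$ being defined at $p$ as the chosen orthonormal basis). This identifies the linear terms and completes the argument.

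I do not expect any serious obstacle: the statement is essentially a bookkeeping consequence of Taylor's theorem once one has the smooth chart from Definition \ref{u_epsilon}. The only mild care needed is (a) tracking that the remainder constant $c$ depends only on $\|f\|_{\CC^2}$ and on fixed geometric data — this follows because $\partial^2(f\circ F)$ is a sum of terms involving $\partial^2 f$, $\partial f$ contracted with $\partial F$, $\partial^2 F$, and these $F$-derivatives are bounded on the (fixed) chart by the geometry of $M$ and $\Sm$; and (b) the verification that the coordinate vector fields agree with the frame \emph{at the center} $p$ (they need not agree away from $p$), which is precisely what the radial-geodesic and normal-geodesic constructions of \S\ref{sec_coordinate} are designed to give and what \eqref{grad1} records in the normal directions.
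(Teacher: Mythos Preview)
Your proposal is correct. It differs mildly from the paper's argument in technique: you apply the full $(n+m)$-variable Taylor theorem once in the coordinate chart and then identify the first-order coefficients by checking $\px{i}|_p=e_i|_p$ and $\py{\af}|_p=e_\af|_p$. The paper instead performs two one-variable Taylor expansions along geodesics, first along the radial geodesic $\sm_0(t)=F_0(t\xi)$ in $\Sm$ and then along the normal geodesic $\sm(t)=F(\xi,t\eta)$, exploiting that $\sm_0'(t)=\xi^i e_i$ and $\sm'(t)=\eta^\af e_\af$ hold along the \emph{entire} geodesic (not just at $p$) because the frames were built by parallel transport along those curves. Your route is a bit more streamlined; the paper's route has the advantage that the remainder is written explicitly as integrals of iterated frame derivatives $e_j(e_i(f))$, $e_\bt(e_\af(f))$, $e_j(e_\af(f))$ along geodesics, so the dependence of $c$ on $\|f\|_{\CC^2}$ is visible without needing to invoke bounds on the derivatives of the chart map $F$.
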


\begin{proof}
Let $q\in U_\vep$ be any point.  To avoid confusion, denote the coordinate of $q$ by $(\xi,\eta)$, where $\xi\in\BR^n$ and $\eta\in\BR^m$ are regarded as constant vectors.  Let $q_0\in\Sm$ be the point with normal coordinate $(\xi,\bo)$, and consider the radial geodesic on $\Sm$ joining $q_0$ and $p$, $\sm_0(t) = F_0(t\xi)$.  Applying Taylor's theorem on $f(\sm_0(t))$ gives
\begin{align*}
f(\xi,\bo) = f(\bo,\bo) + \frac{\dd}{\dd t}\big|_{t=0}f(\sm_0(t)) + \int_0^1(1-t)\frac{\dd^2\,f(\sm_0(t))}{\dd t^2}\,\dd t~.
\end{align*}
Since $\sm_0'(t) = \xi^i\,e_i$, we find that
\begin{align}
f(\xi,\bo) = f(\bo,\bo) + \xi^i\,e_i(f)|_p + \xi^i\xi^j\int_0^1(1-t)\,(e_j(e_i(f)))(\sm_0(t))\,\dd t ~.
\label{Taylor1} \end{align}

Next, consider the normal geodesic joining $q$ and $q_0$, $\sm(t) = F(\xi, t\eta)$.  Remember that $\sm'(t) = \eta^\af e_\af$.  By considering $f(\sm(t))$,
\begin{align}
f(\xi,\eta) &= f(\xi,\bo) + \eta^\af\,(e_\af(f))|_{q_0} + \eta^\af\eta^\bt\int_0^1(1-t)\,(e_\bt(e_\af(f)))(\sm(t))\,\dd t ~.
\label{Taylor2} \end{align}
Similar to \eqref{Taylor1}, $(e_\af(f))|_{q_0} = (e_\af(f))|_{p} + \xi^j\int_0^1e_j(e_\af(f))(\sm_0(t))\,\dd t$.  Putting these together finishes the proof of this lemma.
\end{proof}

\subsubsection{The expansions of coordinate vector fields}

\begin{lem}
Let $U_\vep$ be a neighborhood of $p\in\Sm$ in $M$ as in Definition \ref{u_epsilon} with the coordinate system $(\bx,\by)=(x^1, \cdots x^n, y^{n+1}, \cdots y^{n+m})$ and the frame $\{e_1, \cdots, e_n, e_{n+1}, \cdots e_{n+m}\}$.  Write
\begin{align*}
\px{i} = \ip{\px{i}}{e_A}e_A  \quad\text{ and }\quad  \py{\mu} = \ip{\py{\mu}}{e_A}e_A ~,
\end{align*}
then $\ip{\px{i}}{e_A}$ and $\ip{\py{\mu}}{e_A}$, considered as locally defined multi-indexed functions, has the following expansions:
\begin{align}  \begin{split}
\ip{\px{i}}{e_j}\big|_{(\bx, \by)} &= \dt_{ij} - y^\af \hm{\af}{i}{j}\big|_p + \CO(|\bx|^2 + |\by|^2) ~,\\
\ip{\py{\mu}}{e_\bt}\big|_{(\bx, \by)} &= \dt_{\mu\bt} + \CO(|\bx|^2 + |\by|^2) ~,
\end{split} \label{vf_expansion}  \end{align}
and both $\ip{\px{i}}{e_\bt}\big|_{(\bx, \by)}$ and $\ip{\py{\mu}}{e_j}\big|_{(\bx, \by)}$ are of the order $|\bx|^2 + |\by|^2$.  By inverting the matrices,
\begin{align}
e_i = \frac{\pl}{\pl x^i} + y^\af\hm{\af}{i}{j}\frac{\pl}{\pl x^j} + \CO(|\bx|^2 + |\by|^2)
\qquad\text{and}\qquad  e_\af &= \frac{\pl}{\pl y^\af} +  \CO(|\bx|^2 + |\by|^2) ~.
\label{vf_expansion1} \end{align}
\end{lem}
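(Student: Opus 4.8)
The plan is to apply Lemma~\ref{Taylor} to each of the (smooth) component functions $\ip{\px{i}}{e_A}$ and $\ip{\py{\mu}}{e_A}$; it then suffices to compute their value at $p$ together with the first-order data $e_j(\cdot)\big|_p$ and $e_\af(\cdot)\big|_p$, since the $\CC^2$ norms of these functions are controlled by the geometry of $M$ and $\Sm$.

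First I would record the zeroth-order and tangential information. At $p=F(\bo,\bo)$ the coordinate fields $\px{i}$, $\py{\mu}$ are the initial velocities of the curves defining $F$, so $\px{i}\big|_p=\me_i$ and $\py{\mu}\big|_p=\me_\mu$; this gives the four values $\dt_{ij},\,0,\,0,\,\dt_{\mu\bt}$. For the tangential derivatives, note that the restriction of $F$ to $\by=\bo$ is exactly the geodesic normal coordinate system of $\Sm$ at $p$ together with the $\nsub$-- and $\nnor$--parallel frames along radial geodesics; in particular $\py{\mu}=\me_\mu$ identically on $\Sm$, so $e_j\big(\ip{\py{\mu}}{e_\bt}\big)\big|_p=e_j\big(\ip{\py{\mu}}{e_k}\big)\big|_p=0$, and $\px{i}=\me_i+\CO(|\bx|^2)$ on $\Sm$ (both have value $\me_i$ and vanishing $\nsub$--covariant derivative at $p$, the former because the $x$--coordinates are geodesic normal on $\Sm$, the latter because $\{e_i\}$ is radially parallel), whence $e_j\big(\ip{\px{i}}{e_k}\big)\big|_p=e_j\big(\ip{\px{i}}{e_\bt}\big)\big|_p=0$.

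The heart of the argument is the normal derivatives. Since $g$ is $\namb$--parallel, these reduce by the Leibniz rule to knowing $\namb_{\me_\mu}\px{i}$, $\namb_{\me_\mu}\py{\nu}$ and $\namb_{\me_\mu}e_A$ at $p$. As $\{e_A\}$ is $\namb$--parallel along normal geodesics and $\me_\mu\big|_p$ is the initial velocity of such a geodesic, $\namb_{\me_\mu}e_A\big|_p=0$; as a normal geodesic has velocity $\eta^\nu\py{\nu}$ and satisfies the geodesic equation, polarization gives $\namb_{\me_\mu}\py{\nu}\big|_p=0$. The remaining term produces the second fundamental form: because $\px{i}$ and $\py{\mu}$ are coordinate fields they commute, so torsion--freeness gives $\namb_{\me_\mu}\px{i}\big|_p=\namb_{\py{\mu}}\px{i}\big|_p=\namb_{\px{i}}\py{\mu}\big|_p$, and since $\px{i}\big|_p=\me_i\in T_p\Sm$ while $\py{\mu}$ restricts to $\me_\mu$ along $\Sm$, this equals $\namb_{\me_i}\me_\mu\big|_p$, whose tangential part is $-\hm{\mu}{i}{j}\big|_p\,\me_j$ (differentiate $\ip{\me_\mu}{\me_j}=0$ on $\Sm$ and use $\nsub_{\me_i}\me_j\big|_p=0$) and whose normal part vanishes ($\nnor_{\me_i}\me_\mu\big|_p=0$). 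Hence $\namb_{\me_\mu}\px{i}\big|_p=-\hm{\mu}{i}{j}\big|_p\,\me_j$, and feeding this back gives $e_\mu\big(\ip{\px{i}}{e_j}\big)\big|_p=-\hm{\mu}{i}{j}\big|_p$ together with $e_\mu\big(\ip{\px{i}}{e_\bt}\big)\big|_p=e_\mu\big(\ip{\py{\nu}}{e_j}\big)\big|_p=e_\mu\big(\ip{\py{\nu}}{e_\bt}\big)\big|_p=0$. Lemma~\ref{Taylor} then yields \eqref{vf_expansion}.

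Finally, \eqref{vf_expansion1} is obtained by inverting the matrix of components: with respect to the ordered bases $(\me_1,\dots,\me_{n+m})$ and $(\px{1},\dots,\px{n},\py{n+1},\dots,\py{n+m})$ this matrix equals the identity plus the block whose $(i,j)$ entry is $-y^\af\hm{\af}{i}{j}\big|_p$ plus an error of order $|\bx|^2+|\by|^2$; since that correction block has entries of size $|\by|$, its square is already $\CO(|\bx|^2+|\by|^2)$, so the inverse is the identity minus that block up to the same order, and reading off the rows for $\me_i$ and $\me_\af$ gives \eqref{vf_expansion1}. I expect the one genuinely delicate point to be the identification $\namb_{\me_\mu}\px{i}\big|_p=-\hm{\mu}{i}{j}\big|_p\,\me_j$: one must not replace $\px{i}$ by $\me_i$ directly (they agree only at $p$), and it is precisely the transfer of the derivative onto $\py{\mu}$ — which does coincide with $\me_\mu$ on all of $\Sm$ — via commuting coordinate fields and torsion--freeness that makes the computation work.
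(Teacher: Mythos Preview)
Your proof is correct and follows the same overall approach as the paper: apply Lemma~\ref{Taylor} and compute the zeroth-- and first--order coefficients of each component function at $p$. The only difference is tactical --- for several of the vanishing first--order terms the paper differentiates identities such as $x^j=x^\ell\ip{\px{\ell}}{e_j}$ and $y^\mu=y^\nu\ip{\py{\nu}}{e_\mu}$ and argues by symmetry, whereas you compute $\namb_{\me_\mu}\px{i}\big|_p$ and $\namb_{\me_\mu}\py{\nu}\big|_p$ directly from the geodesic and parallel--transport properties of the construction; both routes are equally valid.
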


\begin{proof}
We apply Lemma \ref{Taylor} to these locally defined functions.

By construction, $\ip{\px{i}}{e_j}|_p = \dt_{ij}$.  With a similar argument as that for \eqref{grad1}, $x^i\px{i} = x^i e_i$ on $\Sm\cap U_\vep$.  It follows that
\begin{align*}
x^j &= x^\ell\ip{\px{\ell}}{e_j} ~.
\end{align*}
Differentiating the above equation first with respect to $x^i$ and then with respect to $x^k$, and then evaluating at $p$ which has $x^\ell=0$ for all $\ell$, we obtain
\begin{align*}
\left.\left(\px{k}\ip{\px{i}}{e_j}\right)\right|_p + \left.\left(\px{i}\ip{\px{k}}{e_j}\right)\right|_p & = 0 ~.
\end{align*}
On the other hand, it follows from the construction that $(\nsub e_j)|_p = 0$, and
\begin{align*}
\left.\left(\px{k}\ip{\px{i}}{e_j}\right)\right|_p = \left.\ip{\nsub_{\px{k}}\px{i}}{e_j}\right|_p = \left.\ip{\nsub_{\px{i}}\px{k}}{e_j}\right|_p = \left.\left(\px{i}\ip{\px{k}}{e_j}\right)\right|_p ~.
\end{align*}
Hence, $\px{k}\ip{\px{i}}{e_j}$ is zero at $p$.

Since $e_A$ is parallel with respect to $\namb$ along normal geodesics and $e_\af = \py{\af}$ at $p$, $(\namb_{e_\af}e_A)|_p = 0 = (\namb_{\py{\af}}e_A)|_p$.  It follows that
\begin{align*}
\left.\left(\py{\af}\ip{\px{i}}{e_j}\right)\right|_p &= \left.\ip{\namb_{\py{\af}}\px{i}}{e_j}\right|_p = \left.\ip{\namb_{\px{i}}\py{\af}}{e_j}\right|_p = -\left.\ip{\py{\af}}{\namb_{\px{i}}e_j}\right|_p = -\hm{\af}{i}{j}\big|_p
\end{align*}
where the third equality follows from the fact that $\ip{\py{\af}}{e_j}\equiv0$ on $\Sm\cap U_\vep$.

Note that $\ip{\px{i}}{e_\bt}$ vanishes on $\Sm\cap U_\vep$.  Since $e_\bt$ is parallel with respect to $\namb$ along normal geodesics, $(\namb_{e_\af} e_\bt)|_p = 0$, and then
\begin{align*}
\left.\left(\py{\af}\ip{\px{i}}{e_\bt}\right)\right|_p &= \left.\ip{\namb_{\py{\af}}\px{i}}{e_\bt}\right|_p = \left.\ip{\namb_{\px{i}}\py{\af}}{e_\bt}\right|_p ~.
\end{align*}
By construction, $\py{\af} = e_\af$ on $\Sm\cap U_\vep$ and $(\nnor e_\af)|_p = 0$.  Therefore, $\py{\af}\ip{\px{i}}{e_\bt}$ is zero at $p$.

The term $\ip{\py{\mu}}{e_j}$ also vanishes on $\Sm\cap U_\vep$.  It follows from \eqref{grad1} that $y^\mu\ip{\py{\mu}}{e_j} = 0$. Differentiating the above equation first with respect to $y^\alpha$ and then with respect to $y^\beta$, we obtain \begin{align*}
\left.\left(\py{\af}\ip{\py{\bt}}{e_j}\right)\right|_p + \left.\left(\py{\bt}\ip{\py{\af}}{e_j}\right)\right|_p = 0 ~.
\end{align*}
Since $\namb_{e_\nu} e_j = 0$, the above two terms are always equal to each other, and thus both vanish.

For $\ip{\py{\mu}}{e_\bt}$, it follows from the construction that $\ip{\py{\mu}}{e_\bt} = \dt_{\mu\bt}$ on $\Sm\cap U_\vep$.  According to \eqref{grad1}, $y^\mu = y^\nu\ip{\py{\nu}}{e_\mu}$.  By a similar argument as that for $\px{k}\ip{\px{i}}{e_j}$, $\py{\nu}\ip{\py{\mu}}{e_\bt}$ also vanishes at $p$.
\end{proof}

\subsubsection{The expansions of connection coefficients}

\begin{prop}\label{exp_connection}
Let $U_\vep$ be a neighborhood of $p\in\Sm$ in $M$ as in Definition \ref{u_epsilon} with the coordinate system $(\bx,\by)=(x^1, \cdots x^n, y^{n+1}, \cdots y^{n+m})$ and the frame $\{e_1, \cdots, e_n, e_{n+1}, \cdots e_{n+m}\}$.
 Let
$$ \ta_A^B = \ip{\namb_{e_C} e_A}{e_B} \om^C = \ta_A^B(e_C) \om^C $$
be the connection $1$-forms of the frame fields on $U_\vep$, where $\{\om^A\}_{A=1}^{n+m}$ is the dual coframe of $\{e_A\}_{A=1}^{n+m}$.  Then, at a point $q\in U_\vep$ with coordinates $(\bx, \by)$,  
$\theta_A^B(e_C)$, considered as locally defined multi-indexed functions, has the following expansions:
\begin{align}
\begin{split}
\ta_i^j (e_k) |_{(\bx, \by)} &= \oh x^l\,\RM{j}{i}{l}{k}^\Sm\big|_p + y^\af\,\RM{j}{i}{\af}{k}\big|_p + \CO(|\bx|^2+|\by|^2) ~, \\
\ta_i^j(e_\bt) |_{(\bx, \by)} &=  \oh y^\af\,\RM{j}{i}{\af}{\bt}\big|_p + \CO(|\bx|^2+|\by|^2) ~,  \end{split} \label{conn01} \\
\begin{split}
\ta_i^\af (e_j) |_{(\bx, \by)} &= \hm{\af}{i}{j}\big|_p + x^k\,\hmd{\af}{i}{j}{k}\big|_p + y^\bt\,(\RM{\af}{i}{\bt}{j}+\sum_k h_{\alpha ik} h_{\beta jk})\big|_p + \CO(|\bx|^2 + |\by|^2) ~, \end{split} \label{conn02} \\
\begin{split}
\ta_i^\af (e_\bt) |_{(\bx, \by)} &=  \oh y^\gm\,\RM{\af}{i}{\gm}{\bt}\big|_p + \CO(|\bx|^2 + |\by|^2) ~, \\
  \ta_\bt^\af (e_i) |_{(\bx, \by)} &= \oh x^j\,\RM{\af}{\bt}{j}{i}^\perp\big|_p + y^\gm\,\RM{\af}{\bt}{\gm}{i}\big|_p 
 + \CO(|\bx|^2+|\by|^2) ~\\
\ta_\bt^\af (e_\gm)|_{(\bx, \by)} &=  \oh y^\dt\,\RM{\af}{\bt}{\dt}{\gm}\big|_p  + \CO(|\bx|^2+|\by|^2) ~, \end{split} \label{conn03}
\end{align}
where $\RM{j}{i}{l}{k}^\Sm\big|_p$, $\RM{j}{i}{\af}{k}\big|_p$, $\RM{j}{i}{\af}{\bt}\big|_p$ $\hm{\af}{i}{j}\big|_p$, $\hmd{\af}{i}{j}{k}\big|_p$,
$ \RM{\af}{i}{\bt}{j}\big|_p$, $\RM{\af}{i}{\gm}{\bt}\big|_p$,$\RM{\af}{\bt}{j}{i}^\perp\big|_p$, $\RM{\af}{\bt}{\gm}{i}\big|_p$, $\RM{\af}{\bt}{\dt}{\gm}\big|_p$
all represent the evaluation of the corresponding tensors at $p$ and with respect to the frame fields $\{e_i\}_{i=1}^n$ and $\{e_\af\}_{\af=n+1}^{n+m}$. 
\end{prop}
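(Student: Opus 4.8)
The plan is to compute each connection coefficient $\ta_A^B(e_C)|_{(\bx,\by)}$ via a first-order Taylor expansion at $p$ using Lemma \ref{Taylor}, so that the whole proof reduces to: (a) identifying the value of $\ta_A^B(e_C)$ at $p$, and (b) identifying the first-order derivatives $e_i(\ta_A^B(e_C))|_p$ and $e_\af(\ta_A^B(e_C))|_p$, i.e. the directional derivatives along the coordinate/frame directions at $p$. For the value at $p$: along $\Sm$, the normal-bundle part $\ta_i^\af(e_j)|_p = \hm{\af}{i}{j}|_p$ by definition of the second fundamental form, while $\ta_i^j(e_k)|_p$, $\ta_\bt^\af(e_i)|_p$, etc. all vanish because $\{e_i\}$ is $\nsub$-parallel along radial geodesics (so $\nsub e_j|_p=0$) and $\{e_\af\}$ is $\nnor$-parallel (so $\nnor e_\af|_p = 0$); the remaining mixed-frame evaluations $\ta_A^B(e_\bt)|_p$ all vanish because the full frame $\{e_A\}$ is $\namb$-parallel along normal geodesics, so $\namb_{e_\bt}e_A|_p = 0$.

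For the first-order part, I would organize the computation by the two types of directions. The derivatives in normal directions $e_\bt$ (equivalently $\py{\bt}$ at $p$) of any $\ta_A^B(e_C) = \ip{\namb_{e_C}e_A}{e_B}$ are read off from the curvature: since $\namb_{e_\bt}e_A|_p = 0$ and $e_\bt$ commutes suitably with the radial-geodesic-generated frame on $\Sm$, one gets $e_\bt\big(\ip{\namb_{e_C}e_A}{e_B}\big)|_p = \ip{\namb_{e_\bt}\namb_{e_C}e_A}{e_B}|_p = \ip{R(e_\bt,e_C)e_A}{e_B}|_p$ up to a commutator $\namb_{[e_\bt,e_C]}e_A$ that vanishes at $p$ (using that the bracket of the normal-parallel frame with itself or with the $\Sm$-frame has no component producing a nonzero $\namb(\cdot)e_A|_p$); this is exactly the half-curvature terms multiplying $y^\af$, $y^\gm$, $y^\dt$. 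The factor $\oh$ in the ``diagonal blocks'' $\ta_i^j(e_\bt)$, $\ta_i^\af(e_\bt)$, $\ta_\bt^\af(e_\gm)$ versus the absence of $\oh$ in e.g. the $y^\af\,\RM{j}{i}{\af}{k}|_p$ term of $\ta_i^j(e_k)$ is the standard phenomenon in normal coordinates: the radial gauge forces $\ta_A^B(y^\bt e_\bt) = 0$ identically, which upon differentiation gives $e_\gm(\ta_A^B(e_\bt))|_p + e_\bt(\ta_A^B(e_\gm))|_p = 0$, pinning the purely-normal derivative to half the curvature; whereas the $x^l$- and $y^\af$-derivatives of a coefficient evaluated on an $\Sm$-direction $e_k$ are governed by the structure equation $\dd\ta + \ta\w\ta = \Omega$ without such a symmetrization, giving the full curvature coefficient. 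Concretely I would derive each $x^l$-coefficient from differentiating the identity $x^\ell\ip{\px{\ell}}{e_j} = x^j$-type relations together with the first structure equation $\dd\om^A = -\ta_B^A\w\om^B$ restricted to $\Sm$, which produces $\oh x^l\RM{j}{i}{l}{k}^\Sm|_p$ and $\oh x^j\RM{\af}{\bt}{j}{i}^\perp|_p$; and each $x^k$-coefficient of $\ta_i^\af(e_j)$ from the Codazzi relation $\hmd{\af}{i}{j}{k} = e_k(\hm{\af}{i}{j})$ at $p$ (since all connection terms in \eqref{Codazzi2} vanish at $p$).

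The genuinely delicate step — and the one I expect to be the main obstacle — is the coefficient of $y^\bt$ in \eqref{conn02}, namely $\big(\RM{\af}{i}{\bt}{j} + \sum_k h_{\alpha ik}h_{\beta jk}\big)|_p$, because it is the place where the ``full'' curvature appears together with a quadratic second-fundamental-form correction, rather than ``half curvature.'' This requires computing $\py{\bt}\big(\ip{\namb_{\px{i}}e_i\text{-direction... }}{e_\af}\big)|_p$ carefully: one must expand $\namb_{e_i}e_\af$ not merely using $\namb_{e_\bt}e_A|_p = 0$ but keeping track of how $e_i$ differs from $\px{i}$ to first order (via \eqref{vf_expansion1}, $e_i = \px{i} + y^\gm h_{\gm ij}\px{j}+\cdots$), which is exactly what feeds the $\sum_k h_{\af ik}h_{\bt jk}$ term when the $\px{j}$-component hits $\ta_j^\af$ whose leading term is $\hm{\af}{j}{k}$. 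So the bookkeeping there must combine: (i) the curvature term $\ip{R(e_\bt,e_j)e_\af}{e_i}|_p = \RM{\af}{i}{\bt}{j}|_p$ from $e_\bt$-differentiating $\namb_{e_j}e_\af$ through a vanishing-at-$p$ connection, with (ii) the cross term from $e_i$ vs. $\px{i}$ and $e_j$ vs. $\px{j}$ corrections interacting with the order-zero part $\hm{\af}{i}{j}|_p$ of $\ta_i^\af$. I would do this one explicitly and cite it as the model computation, then indicate that the remaining entries follow by the same mechanism together with the antisymmetries $\ta_A^B = -\ta_B^A$ and the curvature symmetries, invoking Lemma \ref{Taylor} to upgrade the pointwise first-order data to the stated $\CO(|\bx|^2+|\by|^2)$ estimates.
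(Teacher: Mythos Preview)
Your overall plan---Taylor-expand via Lemma~\ref{Taylor}, compute the value and first derivatives at $p$, and use the radial-gauge conditions to explain the $\oh$-factors---is correct and matches the paper. But your claim that ``$\namb_{[e_\bt,e_C]}e_A$ vanishes at $p$'' is wrong exactly in the delicate case: for $C=j$ tangential one has $[e_\bt,e_j]|_p = \namb_{e_\bt}e_j|_p - \namb_{e_j}e_\bt|_p = 0 - (-h_{\bt jk}e_k) = h_{\bt jk}e_k$, and hence $\ip{\namb_{[e_\bt,e_j]}e_i}{e_\af}|_p = h_{\bt jk}h_{\af ik}$, which \emph{is} the $h\!\cdot\! h$ correction in \eqref{conn02}. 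So that term is precisely the commutator contribution, not something extra on top of it. Your alternative mechanism via $e_j = \px{j}+y^\gm h_{\gm jk}\px{k}+\cdots$ does give the same answer---passing to commuting coordinate vectors simply moves the commutator into the $e_j$-vs-$\px{j}$ discrepancy---but these are two packagings of one term, not two independent effects.

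The paper's argument is organized more cleanly and avoids this confusion: it records once the curvature formula \eqref{curv} (whose last two terms already encode the commutator $[e_C,e_D]$), together with the gauge identities and their differentiated consequences \eqref{geod01}--\eqref{geod08}. Each expansion is then a two-line computation: invoke the relevant antisymmetry to write the first derivative as (half of) the right side of \eqref{curv}, then observe which quadratic-in-$\ta$ terms survive at $p$. They all vanish except in the $e_\bt(\ta_i^\af(e_j))$ case, where $\ta_i^\af(e_k)\,\ta_\bt^k(e_j)|_p=-h_{\af ik}h_{\bt jk}$ supplies the correction directly. Finally, your plan to get the $x^l$-coefficients from the \emph{first} structure equation $\dd\om^A=-\ta^A_B\wedge\om^B$ is off-target; what is actually used is the \emph{second} structure equation \eqref{curv} together with the tangential radial gauges $x^k\ta_i^j(e_k)|_\Sm=0$ and $x^k\ta_\mu^\nu(e_k)|_\Sm=0$, which yield the antisymmetry needed for the $\oh R^\Sm$ and $\oh R^\perp$ terms.
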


\begin{proof}
Since the restriction of the frame $\{e_i\}_{i=1}^n$ on $\Sm$ is parallel with respect to $\nsub$ along the radial geodesics, $x^k\ta_i^j(e_k)\big|_{(\bx,\bo)} = 0$ for any $i,j\in\{1,\ldots,n\}$.  It follows that
\begin{align}
\ta_i^j(e_k)\big|_{(\bx,\bo)} &= -x^l\frac{\pl\, \ta_i^j(e_l)}{\pl x^k}\big|_{(\bx,\bo)}
\quad\text{and thus }~ \ta_i^j(e_k)\big|_{p} = 0 ~.
\label{geod01} \end{align}
By taking the partial derivative in $x^l$ and evaluating at $p=(\bo,\bo)$, we find that
\begin{align}
\frac{\pl\,\ta_i^j(e_k)}{\pl x^l}\big|_p = - \frac{\pl\, \ta_i^j(e_l)}{\pl x^k}\big|_p ~, \quad\text{or equivalently, }\quad
e_l(\ta_i^j(e_k))|_p = - e_k(\ta_i^j(e_l))|_p
\label{geod02} \end{align}
Similarly, since the restriction of $\{e_\mu\}_{\mu = n+1}^{n+m}$ on $\Sm$ is parallel with respect to $\nnor$ along radial geodesics, $x^k\ta_\mu^\nu(e_k)\big|_{(\bx,\bo)} = 0$ for any $\mu,\nu\in\{n+1,\ldots,n+m\}$.  It follows that
\begin{align}
\ta_\mu^\nu(e_k)\big|_p &= 0  \qquad\text{and}  \label{geod07} \\
e_l(\ta_\mu^\nu(e_k))\big|_p &= - e_k(\ta_\mu^\nu(e_l))\big|_p ~.  \label{geod08}
\end{align}

Since the frame $\{e_A\}_{i=1}^{n+m}$ is parallel with respect to $\namb$ along normal geodesics, $y^\mu\ta_A^B(e_\mu) = 0$ and it follows that
\begin{align}
\ta_A^B(e_\mu) = - y^\nu\frac{\pl\,\ta_A^B(e_\nu)}{\pl y^\mu} \quad\Rightarrow\quad \ta_A^B(e_\mu)\big|_{(\bx,\bo)} = 0.
\label{geod03} \end{align}
By taking partial derivatives,
\begin{align}
\frac{\pl\,\ta_A^B(e_\mu)}{\pl x^k} = - y^\nu\frac{\pl^2\,\ta_A^B(e_\nu)}{\pl x^k\pl y^\mu}  \quad\text{and}\quad
\frac{\pl\,\ta_A^B(e_\mu)}{\pl y^\nu} = - \frac{\pl\,\ta_A^B(e_\nu)}{\pl y^\mu} - y^\dt \frac{\pl^2\,\ta_A^B(e_\dt)}{\pl y^\nu\pl y^\mu} ~.
\label{geod04} \end{align}
Note that on $\Sm$, $\{\px{i}\}_{i=1}^n$ and $\{e_i\}_{i=1}^n$ are both bases for $T\Sm$.  Therefore,
\begin{align}
e_k(\ta_A^B(e_\mu))\big|_{(\bx,\bo)} = 0 ~.
\label{geod05} \end{align}
By construction, $\py{\mu} = e_\mu$ on $\Sm = \{y^\mu = 0 \text{ for all }\mu\}$.  It follows from \eqref{geod04} that
\begin{align}
e_\nu(\ta_A^B(e_\mu))\big|_{(\bx,\bo)} = - e_\mu(\ta_A^B(e_\nu))\big|_{(\bx,\bo)} ~.
\label{geod06} \end{align}

In terms of the connection $1$-forms, the components of the Riemann curvature tensor are
\begin{align}
&\RM{A}{B}{C}{D} \notag \\
=\,& \ip{\namb_{\me_C}\namb_{\me_D}\me_B - \namb_{\me_D}\namb_{\me_C}\me_B - \namb_{[\me_C,\me_D]}\me_B}{\me_A} \notag \\
=\,& e_C(\ta_B^A(e_D)) - e_D(\ta_B^A(e_C)) - (\ta_B^E\w\ta_E^A)(e_C,e_D) - \ta_B^A(e_E)\ta_D^E(e_C)  + \ta_B^A(e_E)\ta_C^E(e_D) ~.
\label{curv} \end{align}

With these preparations, we proceed to prove all the expansion formulae:

(The expansion of $\ta_i^j(e_k)$)~  It follows from \eqref{geod01} that the zeroth order term is zero.  By \eqref{geod02}, the coefficient of $x^l$ in the expansion is
\begin{align*}
e_l(\ta_i^j(e_k))|_p &= \oh\left.\left[e_l(\ta_i^j(e_k)) - e_k(\ta_i^j(e_l))\right]\right|_p = \oh\left.\RM{j}{i}{l}{k}^\Sm\right|_p ~.
\end{align*}
Note that for $\RM{\af}{\bt}{j}{i}^\Sm$, all the indices of summation in \eqref{curv} go from $1$ to $n$.  Due to \eqref{geod05}, the coefficient of $y^\af$ in the expansion is
\begin{align*}
\left.e_\af(\ta_i^j(e_k))\right|_{p} = \left.\left[ e_\af(\ta_i^j(e_k)) - e_k(\ta_i^j(e_\af))\right]\right|_p = \left.\RM{j}{i}{\af}{k}\right|_p ~.
\end{align*}

(The expansion of $\ta_i^j(e_\bt)$)~  By \eqref{geod03}, the zeroth order term is zero, and the coefficient of $x^l$ in the expansion is zero.  According to \eqref{geod06}, the coefficient of $y^\af$ in the expansion is
\begin{align*}
\left.e_\af(\ta_i^j(e_\bt))\right|_{p} = \oh\left.\left[ e_\af(\ta_i^j(e_\bt)) - e_\bt(\ta_i^j(e_\af))\right]\right|_p = \left.\RM{j}{i}{\af}{\bt}\right|_p ~.
\end{align*}

(The expansion of $\ta_i^\af(e_j)$)~  On $\Sm\cap U_\vep$, $\ta_i^\af(e_j) = \ip{\namb_{e_j}e_i}{e_\af} = \hm{\af}{i}{j}$.  Its derivative along $e_k$ is
\begin{align*}
e_k(\II(e_i,e_j,e_\af)) &= (\namb_{e_k}\II)(e_i,e_j,e_\af) + \II(\nsub_{e_k}e_i,e_j,e_\af) + \II(e_i,\nsub_{e_k}e_j,e_\af) + \II(e_i,e_j,\nnor_{e_k}e_\af) ~.
\end{align*}
Due to \eqref{geod01} and \eqref{geod07}, the last three terms vanish at $p$.  It follows that $e_k(\II(e_i,e_j,e_\af))|_p$ is equal to $\hmd{\af}{i}{j}{k}|_p$.

The coefficient of $y^\bt$ is $e_\bt(\ta_i^\af(e_j))|_p$.  By \eqref{geod05} and \eqref{geod03},
\begin{align*}
\left.\RM{\af}{i}{\bt}{j}\right|_p &= \left.\left[ e_\bt(\ta_i^\af(e_j)) + \ta_i^\af(e_k)\ta_\bt^k(e_j) \right]\right|_p = \left.\left[ e_\bt(\ta_i^\af(e_j)) - \hm{\af}{i}{k}\hm{\bt}{k}{j} \right]\right|_p ~.
\end{align*}

(The expansion of $\ta_i^\af(e_\bt)$)~  According to \eqref{geod03}, the zeroth order term is zero, and the coefficient of $x^l$ in the expansion is zero.  By \eqref{geod06} and \eqref{geod03},
\begin{align*}
\left.e_\gm(\ta_i^\af(e_\bt))\right|_p &= \oh\left.\left[ e_\gm(\ta_i^\af(e_\bt)) - e_\bt(\ta_i^\af(e_\gm)) \right]\right|_p = \oh\left.\RM{\af}{i}{\gm}{\bt}\right|_p ~.
\end{align*}

(The expansion of $\ta_\bt^\af(e_i)$)~  By \eqref{geod07}, the zeroth order term vanishes.  With \eqref{geod08}, \eqref{geod07} and \eqref{geod01},
\begin{align*}
\left.e_j(\ta_\af^\bt(e_i))\right|_p &= \oh\left.\left[ e_j(\ta_\af^\bt(e_i)) - e_i(\ta_\af^\bt(e_j)) \right]\right|_p = \oh\left.\RM{\af}{\bt}{j}{i}^\perp\right|_p ~.
\end{align*}
Note that for $\RM{\af}{\bt}{j}{i}^\perp$, the index of summation in the third term of \eqref{curv} goes from $n+1$ to $n+m$, and the indices of summation in the last two terms of \eqref{curv} go from $1$ to $n$.  By \eqref{geod05}, \eqref{geod03} and \eqref{geod07},
\begin{align*}
\left.e_\gm(\ta_\af^\bt(e_i))\right|_p &= \left.\left[ e_\gm(\ta_\af^\bt(e_i)) - e_i(\ta_\af^\bt(e_\gm)) \right]\right|_p = \left.\RM{\af}{\bt}{\gm}{i}\right|_p ~.
\end{align*}

(The expansion of $\ta_\bt^\af(e_\gm)$)~  Due to \eqref{geod03}, $\ta_\bt^\af(e_\gm)$ vanishes on $\Sm\cap U_\vep$.  According to \eqref{geod06} and \eqref{geod03},
\begin{align*}
\left.e_\dt(\ta_\bt^\af(e_\gm))\right|_p &= \oh \left.\left[ e_\dt(\ta_\bt^\af(e_\gm)) - e_\gm(\ta_\bt^\af(e_\dt)) \right]\right|_p = \oh\left.\RM{\af}{\bt}{\dt}{\gm}\right|_p ~.
\end{align*}
This finishes the proof of this proposition.
\end{proof}

\subsubsection{Horizontal and vertical subspaces}\label{sec_linear}

For any $q\in U_\vep\subset M$, there exists a unique $p\in\Sm$ such that there is a unique normal geodesic inside $U_\vep$ connecting $q$ and $p$.  Any tensor defined on $\Sm$ can be extended to $U_\vep$ by parallel transport of $\namb$ along normal geodesics.  Here are some notions that will be used in this paper.

Assume that $\Sm$ is oriented.
The parallel transport of $T\Sm$ along normal geodesics defines an $n$-dimensional distribution of $TM|_{U_\vep}$, which is called the \emph{horizontal distribution}, and is denoted by $\CH$.  Its orthogonal complement in $TM$ is called the \emph{vertical distribution}, and is denoted by $\CV$.  It is clear that $\CH = \spn\{\me_1,\cdots,\me_n\}$ and $\CV = \spn\{\me_{n+1},\cdots,\me_{n+m}\}$.  The parallel transport of the volume form of $\Sm$ along normal geodesics defines an $n$-form on $U_\vep$, which is  denoted by $\Om$.  Let $\{\om^1,\cdots,\om^n,$ $\om^{n+1},\cdots,\om^{n+m}\}$ be the dual coframe of $\{\me_1,\cdots,\me_n,$ $\me_{n+1},\cdots,\me_{n+m}\}$.  In terms of the coframe,
\begin{align}
\Om = \om^1\w\cdots\w\om^n ~.
\end{align}
From the construction, $\Om$ has comass $1$.  That is to say, the evaluation of $\Om$ on any oriented $n$-plane takes the value between $-1$ and $1$.

For any $q\in U_\vep$ and any oriented $n$-plane $L\subset T_q M$, consider the orthogonal projection onto $\CV_q$, $\pi_\CV$, and the evaluation of $\Om$ on $L$.  Suppose that $\Om(L)>0$.  By the singular value decomposition, there exist oriented orthonormal basis $\{\me_1,\cdots,\me_n\}$ for $\CH_q$, orthonormal basis $\{\me_{n+1},\cdots,\me_{n+m}\}$ for $\CV_q$ and angles $\phi_1,\cdots,\phi_n\in[0,\pi/2)$ such that
\begin{align}
\{\te_j = \cos\phi_j\,\me_j + \sin\phi_j\,\me_{n+j}\}_{j=1}^n
\label{linear1} \end{align}
constitutes an oriented, orthonormal basis for $L$.  If $n>m$, $\phi_j$ is set to be zero for $j>m$.  It follows that
\begin{align}
\Om(L) = \cos\phi_1\cdots\cos\phi_n ~,
\label{linear3} \end{align}
and the operator norm of $\pi_\CV$ is
\begin{align}
\fs(L) &:= \big|\!\big| \pi_\CV|_L \big|\!\big|_{\text{op}} = \max\{\sin\phi_1,\cdots,\sin\phi_n\} ~.
\label{linear4} \end{align}

\begin{rmk} \label{rmk_linear}
The construction \eqref{linear1} works for  $\Om(L) = 0$ as well, and some of the angles would be $\pi/2$.  The formulae \eqref{linear3} and \eqref{linear4} remain valid.  We briefly explain this linear-algebraic construction.  Consider the orthogonal projection onto $\CH_q$, $\pi_\CH$.  Let $L_\CV = \ker(\pi_\CH:L\to\CH_q)$; it is a linear subspace of $\CV_q$.  Let $L'$ be the orthogonal complement of $L_\CV$ in $L$.  Then, $L = L'\oplus L_\CV$, and $\pi_\CH:L'\to \CH_q$ is injective.  Note that $\pi_\CV(L')$ is orthogonal to $L_\CV$.  The linear subspace $L'$ is the graph of a linear map from $\pi_\CH(L')\subset \CH_q$ to $\CV_q$.  The basis \eqref{linear1} is constructed by applying the singular value decomposition to this linear map together with an orthonormal basis for $L_\CV$.
\end{rmk}

\begin{rmk} \label{rmk_o}
The singular value decomposition does not require orientation.  If $\Sm$ and $L$ are not assumed to be oriented, one can still construct the frame \eqref{linear1} for $L$ with $\phi_1,\cdots,\phi_n\in[0,\pi/2]$.  But to define $\Om(L)$, both $\Sm$ and $L$ have to be oriented.
\end{rmk}

It is easy to see that the orthogonal complement of $L$ has the following orthonormal basis:
\begin{align}
\{\te_\af = -\sin\phi_{\af}\,\me_{\af-n} + \cos\phi_{\af}\,\me_{\af}\}_{\af=n+1}^m
\label{linear2} \end{align}
where $\phi_\af = \phi_{\af-n}$.  If $m>n$, $\phi_{\af}$ is set to be zero for $\af>2n$.  The following estimates will be needed later, and are straightforward to come by:
\begin{align} \label{linear5}
\sum_{i=1}^n \left| (\om^j\ot\om^k)(\te_i,\te_i) \right| \leq n ~,\quad
\sum_{i=1}^n \left| (\om^{\af}\ot\om^j)(\te_i,\te_i) \right| \leq n\fs
\end{align}
and
\begin{align}  \begin{split}
&\left| (\om^1\w \cdots \w \om^n)(\te_{\af}, \te_1,\cdots ,\widehat{\,\te_i\,}, \cdots, \te_n) \right| \leq \fs ~, \\
&\left| (\om^1\w \cdots \w \om^n)(\te_{\af}, \te_{\bt},  \te_1, \cdots, \widehat{\,\te_i\,}, \cdots, \widehat{\,\te_j\,} \cdots, \te_n) \right| \leq \fs^2 ~, \\
&\left| (\om^{\af}\w \om^1 \w \cdots \w \widehat{\om^i} \w \cdots \w \om^n)(\te_1,  \cdots, \te_n) \right|\leq n \fs ~, \\
&\left| (\om^{\af}\w \om^1 \w \cdots \w \widehat{\om^i} \w \cdots \w \om^n)(\te_{\bt}, \te_1,\cdots, \widehat{\,\te_j\,}, \cdots, \te_n) \right| \leq 1 ~, \\
&\left| (\om^{\af} \w \om^{\bt} \w \om^1\w \cdots \w\widehat{\om^i}\w \cdots\w \widehat{\om^j} \w \cdots \w \om^n)(\te_1,\cdots, \te_n) \right| \leq n(n-1)\fs^2 \\
\end{split} \label{linear6} \end{align}
for any $i,j,k \in \{1, \ldots, n\}$ and $\af,\bt\in\{n+1, \ldots, n+m\}$.

The above estimates are the zeroth order estimate.  For the first, third and fourth inequalities of \eqref{linear6}, a more refined version will also be needed.  It follows from \eqref{linear1} and \eqref{linear2} that
\begin{align}
(\om^1\w \cdots \w \om^n)(\te_{\af}, \te_1,\cdots ,\widehat{\,\te_i\,}, \cdots, \te_n) &= (-1)^{i}\delta_{\af(n+i)}\,\frac{\sin\phi_i}{\cos\phi_i}\,\prod_{k=1}^n\cos\phi_k ~.
\label{linear7} \end{align}
Let $\tom^1,\cdots,\tom^n,\tom^{n+1},\cdots,\tom^{n+m}$ be the dual basis of $\te_1,\cdots,\te_n,\te_{n+1},\cdots,\te_{n+m}$.  According to \eqref{linear1} and \eqref{linear2},
\begin{align*}
\om^j = \cos\phi_j\,\tom^j - \sin\phi_j\,\tom^{n+j}  \quad\text{and}\quad  \om^\af = \sin\phi_\af\,\tom^{\af-n} + \cos\phi_\af\,\tom^\af ~.
\end{align*}
Hence,
\begin{align} \begin{split}
& (\om^{\af}\w \om^1 \w \cdots \w \widehat{\om^i} \w \cdots \w \om^n)(\te_1,  \cdots, \te_n) \\
=\;& \left( (\sin\phi_\af\tom^{\af-n}) \w (\cos\phi_1\tom^1) \w \cdots \w (\widehat{\cos\phi_i\tom^i}) \w \cdots \w (\cos\phi_n\tom^n) \right)(\te_1,  \cdots, \te_n) \\
=\;& (-1)^{i+1}\delta_{\af(n+i)}\,\frac{\sin\phi_i}{\cos\phi_i}\,\prod_{k=1}^n\cos\phi_k ~.
\end{split} \label{linear8} \end{align}

It follows from
\begin{align*}
& (\om^{\af}\w \om^1 \w \cdots \w \widehat{\om^j} \w \cdots \w \om^n)(\te_{\af}, \te_1,\cdots, \widehat{\,\te_j\,}, \cdots, \te_n) \\
=\,& \om^\af(\te_\af)\cdot[(\om^1 \w \cdots \w \widehat{\om^j} \w \cdots \w \om^n)(\te_1,\cdots, \widehat{\,\te_j\,}, \cdots, \te_n)] \\
& + \sum_{k=1}^{j-1} (-1)^k\om^{\af}(\te_k)\cdot[(\om^1 \w \cdots \w \widehat{\om^j} \w \cdots \w \om^n)(\te_{\af},\te_1,\cdots, \widehat{\,\te_k\,},\cdots \widehat{\,\te_j\,}, \cdots, \te_n)] \\
&\quad + \sum_{k=j+1}^{n} (-1)^{k+1}\om^{\af}(\te_k)\cdot[(\om^1 \w \cdots \w \widehat{\om^j} \w \cdots \w \om^n)(\te_{\af},\te_1,\cdots, \widehat{\,\te_j\,},\cdots \widehat{\,\te_k\,}, \cdots, \te_n)] ~,
\end{align*}
that
\begin{align}
\left| (\om^{\af}\w \om^1 \w \cdots \w \widehat{\om^j} \w \cdots \w \om^n)(\te_{\af}, \te_1,\cdots, \widehat{\,\te_j\,}, \cdots, \te_n) - \frac{\cos\phi_\af}{\cos\phi_j} \,\prod_{k=1}^n\cos\phi_k \right| \leq (n-1)\,\fs^2 ~.
\label{linear9} \end{align}

\section{Minimal submanifolds and stability conditions}

\subsection{The stability of a minimal submanifold} \label{sec_stable}

A submanifold $\Sm\subset(M,g)$ is said to be \emph{minimal} if its mean curvature vanishes, $H = 0$.  It means that $\Sm$ is a critical point of the volume functional.  A minimal submanifold $\Sm$ is said to be \emph{strictly stable} if the second variation of the volume functional is positive at $\Sm$ (stable if the second variation is non-negative).  We now recall the second variational formula of the volume functional.  The detail can be found in \cite[\S3.2]{ref_Simons}.

Suppose that $V$ is a normal vector field on $\Sm$.  There are two linear operators on $\N\Sm$ in the second variation formula.  The first one is the partial Ricci operator defined by
\begin{align*}
\CR(V) &= \tr_\Sm \big( R(\,\cdot\,,V)\,\cdot\,\big)^\perp
\end{align*}
where $R$ is the Riemann curvature tensor of $(M,g)$.  The second one is basically the norm-square of the second fundamental form along $V$.  The shape operator along $V$ is a symmetric map from $T\Sm$ to itself, and is defined by
\begin{align*}
\CS_V(X) = -(\namb_X V)^{T\Sm} = -\namb_X V + \nnor_X V ~\text{, or equivalently}~ \ip{\CS_V(X)}{Y} = \ip{\II(X,Y)}{V} 
\end{align*}
for any tangent vectors $X$ and $Y$ of $\Sm$.  By regarding $\CS$ as a map from $N\Sm$ to $\text{Sym}^2(T\Sm)$, define
\begin{align*}
\CA(V) &= \CS^t\circ\CS(V)
\end{align*}
where $\CS^t:\text{Sym}^2(T\Sm)\to N\Sm$ is the transpose map of $\CS$.

With this understanding, the second variation of the volume functional in the direction of $V$ is
\begin{align}
\int_\Sm |\nnor V|^2 + \ip{\CR(V)}{V} - \ip{\CA(V)}{V}
\label{secondv} \end{align}
Therefore, $\Sm$ is strictly stable if and only if $(\nnor)^*\nnor + \CR - \CA$ is a positive operator.  Note that $(\nnor)^*\nnor$ is always non-negative definite, and $\CR - \CA$ is a linear map on $\N\Sm$.  Hence, the positivity of $\CR - \CA$ is a condition easier to check, and implies the strict stability of $\Sm$.

\begin{defn}
A minimal submanifold $\Sm\subset(M,g)$ is said to be \emph{strongly stable} if $\CR - \CA$ is a (pointwise) positive operator on $\N\Sm$.
\end{defn}

In terms of the notations introduced in \S\ref{sec_notation}, $\Sm$ is strongly stable if there exists a constant $c_0>0$ such that
\begin{align}
- \sum_{\af,\bt,i}\RM{i}{\af}{i}{\bt} v^\af v^\bt - \sum_{\af,\bt,i,j}\hm{\af}{i}{j}\hm{\bt}{i}{j}v^\af v^\bt \geq c_0\sum_{\af} (v^\af)^2
\label{sstable}\end{align}
for any $(v^{n+1},\cdots,v^{n+m})\in\BR^{m}$.

In particular, for a hypersurface $\Sigma$, the condition is 
\[-\Ric(\nu, \nu)-|A|^2\geq c_0,\] where $\nu$ is a unit normal and $|A|^2=\sum_{i, j} h_{ij}^2$.

\subsection{Proof of Proposition A}\label{sec_ss}

It is easy to see that \eqref{sstable} holds for a totally geodesic submanifold in a manifold with negative sectional curvature. 
When the geometry has special properties, the condition \eqref{sstable} is equivalent to some natural curvature condition on the minimal submanifold.

\subsubsection{Complex submanifolds in K\"ahler manifolds}

Let $(M^{2n},g,J,\om)$ be a K\"ahler manifold, and $\Sm^{2p}\subset M$ be a complex submanifold.  The submanifold $\Sm$ is automatically minimal.  In fact, the second variation \eqref{secondv} is always non-negative.  In this case, the operator $\CR-\CA$ was studied by Simons in the famous paper \cite[\S3.5]{ref_Simons}.  We briefly summarize his results.  The strongly stable condition is equivalent to that
\begin{align*}
\left\langle -J\left(\sum_{i=1}^p R^\perp(e_i,f_i)(V)\right), V \right\rangle \geq c_0\,|V|^2
\end{align*}
where $\{e_1,\cdots,e_p,f_1,\cdots,f_p\}$ is an orthonormal frame for $T\Sm$ with $f_i = Je_i$.  In other words, the normal bundle curvature contracting with $\om_\Sm$ is positive definite.  It implies that the normal bundle of $\Sm$ admits no non-trivial holomorphic cross section.

\subsubsection{Minimal Lagrangians in K\"ahler--Einstein manifolds}

Let $(M^{2n},g,\om)$ be a K\"ahler--Einstein manifold, where $\om$ is the K\"ahler form.  Denote the Einstein constant by $c$, i.e.\ $\Ric = c\,g$.  A half-dimensional submanifold $L^n\subset M$ is said to be Lagrangian if $\om|_L$ vanishes.  Suppose that $L$ is both minimal and Lagrangian.  Then, \eqref{sstable} is equivalent to the condition that
\begin{align}
\Ric^L - c  ~\text{ is a positive definite operator on }~ TL ~,
\end{align}
where $\Ric^L$ is the Ricci curvature of $g|_L$.  For completeness, the derivation is included in Appendix \ref{apx_Lag}.
We remark that when $c<0$, a minimal Lagrangian is always stable.  That is to say, the second variation \eqref{secondv} is strictly positive for any non-identically zero $V$; see \cite{ref_Chen, ref_Oh}.

A case of particular interest is \emph{special Lagrangians} in a Calabi--Yau manifold; see \cite[\S III]{ref_HL}.  The constant $c=0$ for a Calabi--Yau manifold, and the strong stability condition \eqref{sstable} is equivalent to the positivity of $\Ric^L$.  By the Bochner formula, it implies that the first Betti number of $L$ is zero.  According to the result of McLean \cite[Corollary 3.8]{ref_McLean}, $L$ is infinitesimally rigid as a special Lagrangian submanifold.

\subsubsection{Coassociatives in $G_2$ manifolds}

A $G_2$ manifold $(M,g)$ is a $7$-dimensional Riemannian manifold whose holonomy is contained in $G_2$.  A \emph{coassociative} submanifold is a special class of minimal, $4$-dimensional submanifold in $M$.  A complete story can be found in \cite[\S IV]{ref_HL} and \cite[ch.11--12]{ref_Joyce1}, and a brief summary is included in Appendix \ref{apx_coa}.

Suppose that $\Sm^4\subset M$ is coassociative.  The strong stability condition \eqref{sstable} is equivalent to that
\begin{align}
-2W_- + \frac{s}{3}  ~\text{ is a positive definite operator on }~ \Ld^2_-(\Sm) ~,
\label{coa_stable} \end{align}
where $W_-$ is the anti-self-dual part of the Weyl curvature of $g|_\Sm$, and $s$ is the scalar curvature of $g|_\Sm$.  The computation bears its own interest in $G_2$ geometry, and is included in Appendix \ref{apx_coa}.  According to the Weitzenb\"ock formula for anti-self-dual $2$-forms \cite[Appendix C]{ref_FUl}, \eqref{coa_stable} implies that $\Sm$ has no non-trivial anti-self-dual harmonic $2$-forms.  Due to \cite[Corollary 4.6]{ref_McLean}, $\Sm$ is infinitesimally rigid as a coassociative submanifold.

\subsection{The Codazzi equation on a minimal submanifold}

Suppose that $\Sm$ is a minimal submanifold.  Choose a local orthonormal frame $\{\me_1,\cdots,\me_{n+m}\}$ such that the restriction of $\{\me_1,\cdots,\me_n\}$ on $\Sm$ are tangent to $\Sm$ and the restriction of $\{\me_{n+1},\cdots,\me_{n+m}\}$ to $\Sm$ are normal to $\Sm$.  Consider the following equation on $\Sm$:
\begin{align*}
\hmd{\af}{i}{i}{k} = \me_k(\hm{\af}{i}{i}) - 2\ip{\nsub_{\me_j}\me_i}{\me_k}\hm{\af}{k}{i} - \ip{\nnor_{\me_j} \me_\af}{\me_\bt}\hm{\bt}{i}{i} ~.
\end{align*}
Since the mean curvature vanishes, the first and third terms are zero.  For the second term, $\ip{\nsub_{\me_j}\me_i}{\me_k}$ is skew-symmetric in $i$ and $k$, and $\hm{\af}{k}{i}$ is symmetric in $i$ and $k$.  Hence, the second term is also zero.  By combining it with the Codazzi equation \eqref{Codazzi1},
\begin{align}
\RM{\af}{i}{i}{j} &= \hmd{\af}{j}{i}{i} ~.
\label{Codazzi3} \end{align}

\section{The convexity of $\psi$ and a local uniqueness theorem of minimal submanifolds}

Suppose that $\Sm$ is a minimal submanifold in $(M,g)$ and consider the function $\psi = \sum_{\af}(y^\af)^2$ on the tubular neighborhood $U_\vep$ of $\Sigma$ as in \S\ref{sec_distance}.  Similar to \cite{ref_TsaiW}, the strong stability of $\Sm$ is closely related to the positivity of the trace of $\Hess(\psi)$ over an $n$-dimensional subspace.

\begin{prop}\label{prop_cvx}
Let $\Sm^n\subset(M,g)$ be a compact minimal submanifold that is strongly stable in the sense of \eqref{sstable}.  There exist positive constants $\vep_1$ and $c$ which depend on the geometry of $M$ and $\Sm$ and which have the following property.  For any $q\in U_{\vep_1}$ and any $n$-plane $L\subset T_q M$,
\begin{align}
\tr_L\Hess(\psi) &\geq c\left( \big(\fs(L)\big)^2 + \psi(q) \right)
\end{align}
where $\fs(L)$ is defined by \eqref{linear4}.
\end{prop}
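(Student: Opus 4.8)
The plan is to compute $\Hess(\psi)$ explicitly in the geodesic frame $\{e_1,\dots,e_{n+m}\}$ of \S\ref{sec_coordinate} to second order in $(\bx,\by)$, then take the trace over the $n$-plane $L$ using the normal-form basis $\{\te_j\}$ from \eqref{linear1}. Since $\psi = \sum_\af (y^\af)^2 = \sum_\af (\om^\af \text{-coordinate})^2$ is built from the $y$-coordinates, the first step is to express $\Hess(\psi)(e_A,e_B) = e_A e_B(\psi) - (\nabla_{e_A} e_B)(\psi)$ in terms of the connection coefficients $\ta_A^B(e_C)$ whose expansions are given in Proposition \ref{exp_connection}. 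Using \eqref{grad2}, namely $\namb\psi = 2 y^\af \py{\af}$, together with \eqref{vf_expansion1} to convert $\py{\af}$ to $e_\af$, one gets $\namb\psi = 2 y^\af e_\af + \CO((|\bx|^2+|\by|^2)|\by|)$, and hence $\Hess(\psi)(e_A,e_B) = 2 e_A(y^\bt) \langle e_\bt, e_B\rangle + 2 y^\bt \langle \namb_{e_A} e_\bt, e_B\rangle + \cdots = 2\, e_A(y^\bt)\,\delta_{\bt B}$ plus terms involving $y^\bt \ta_\bt^B(e_A)$ plus higher order. The key inputs are then $e_A(y^\bt)|_p$ (which is $\delta_{A\bt}$ to leading order, by \eqref{vf_expansion1}) and the expansions of $\ta_\bt^\af(e_i)$, $\ta_\bt^\af(e_\gm)$, $\ta_i^\af(e_j)$ from \eqref{conn02}, \eqref{conn03}.

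Carrying this through, I expect the diagonal Hessian entries to look like $\Hess(\psi)(e_i,e_i) = 2\sum_\af y^\af y^\bt\big(\RM{\af}{i}{\bt}{i} + \sum_k h_{\af i k} h_{\bt i k}\big)\big|_p + \CO(\cdots)$ for tangential directions, and $\Hess(\psi)(e_\af,e_\bt) = 2\delta_{\af\bt} + \CO(|\bx|^2+|\by|^2)$ for normal directions, with the mixed entries $\Hess(\psi)(e_i,e_\af)$ of order $|\by|(|\bx|+|\by|)$. Now I take the trace over $L$ with basis $\te_j = \cos\phi_j\, e_j + \sin\phi_j\, e_{n+j}$: the normal components contribute $2\sum_j \sin^2\phi_j$ plus error, which already dominates $\fs(L)^2 = \max_j \sin^2\phi_j$; and the tangential components contribute, at the point $q$ whose $y$-coordinates are $\eta$, the term $-2\sum_{j}\cos^2\phi_j\big(-\RM{\af}{j}{\bt}{j} - \sum_k h_{\af j k}h_{\bt j k}\big)\eta^\af\eta^\bt\big|_p$, and by the strong stability hypothesis \eqref{sstable} the quadratic form $-\sum_j(\RM{\af}{j}{\bt}{j} + \sum_k h_{\af jk}h_{\bt jk})$ is $\geq c_0 \sum_\af(\cdot)^2$, so this is $\geq 2c_0(\min_j\cos^2\phi_j)\,\psi(q) \geq c_0\,\psi(q)$ once $\vep_1$ is small enough that all $\phi_j$ are bounded away from $\pi/2$ — but one must be careful: at points where $\psi$ is small this is fine, and the $\CO$-error is controlled by $\vep_1$ against the main terms. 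Combining, $\tr_L\Hess(\psi) \geq 2\sum_j\sin^2\phi_j + 2c_0(\min_j\cos^2\phi_j)\psi(q) - C(|\bx|^2+|\by|^2) \geq c(\fs(L)^2 + \psi(q))$ after absorbing the error into the positive main terms by shrinking $\vep_1$.

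The main obstacle is the bookkeeping of the $\CO(|\bx|^2+|\by|^2)$ error terms and ensuring they are genuinely lower order than \emph{both} $\fs(L)^2$ and $\psi(q)$ simultaneously. The subtlety is that $\fs(L)^2 = \max_j\sin^2\phi_j$ need not be small just because $q$ is close to $\Sm$ — $L$ is an arbitrary $n$-plane — so the error $C(|\bx|^2 + |\by|^2) = C(|\bx|^2+\psi)$ must be dominated by the normal-direction gain $2\sum_j\sin^2\phi_j$ when $\fs$ is not small, and by the $2c_0(\min\cos^2\phi_j)\psi$ term when $\fs$ is small; the $|\bx|^2$ part of the error is harmless since it is multiplied against nothing dangerous. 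One must also check that the $\te_i$-expansion of $\psi$ itself, and the replacement of $\eta$ (the $y$-coordinate of $q$) inside quadratic curvature terms evaluated at $p$ versus at $q$, only introduces $\CO(\psi^{3/2})$ or $\CO(|\bx|\psi)$ corrections — all absorbable. Once the leading-order Hessian is pinned down correctly via Proposition \ref{exp_connection} and the Codazzi relation \eqref{Codazzi3}, the positivity is a direct consequence of \eqref{sstable}, and the rest is a standard smallness-of-$\vep_1$ argument.
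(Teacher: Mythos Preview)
Your overall strategy is the same as the paper's, and the ingredients you identify --- the connection expansions of Proposition~\ref{exp_connection}, the singular-value frame \eqref{linear1}, and strong stability \eqref{sstable} --- are the right ones. However, there is a genuine gap in your computation of the tangential Hessian entries, and it is precisely the point at which the hypothesis that $\Sm$ is \emph{minimal} enters.

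Carrying out your own recipe, $\Hess(\psi)(e_i,e_j) = 2y^\bt\,\ta_\bt^j(e_i) = -2y^\bt\,\ta_j^\bt(e_i)$, and by \eqref{conn02} this equals
\[
-2y^\af\,\hm{\af}{i}{j}\big|_p \;-\; 2y^\af y^\bt\big(\RM{\af}{j}{\bt}{i} + \hm{\af}{j}{k}\hm{\bt}{i}{k}\big)\big|_p \;+\; \CO(|\by|^3)
\]
(note that one may take $\bx = 0$ since $p$ is chosen as the foot of the normal geodesic from $q$, a simplification you do not exploit). Your expected formula for $\Hess(\psi)(e_i,e_i)$ drops the first term $-2y^\af\,\hm{\af}{i}{i}|_p$, and also has the wrong sign on the quadratic term. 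The sign issue is a bookkeeping slip, but the missing linear term is fatal: it is of order $\sqrt{\psi}$, dominates the $c_0\psi$ you want to extract from strong stability, and is \emph{not} controlled by any $\CO(\cdots)$ remainder. After tracing over $L$ you obtain $-2\sum_j\cos^2\phi_j\, y^\af\,\hm{\af}{j}{j}|_p$, which for a single $j$ can have either sign. The paper handles this by writing $\cos^2\phi_j = 1 - \sin^2\phi_j$ and invoking $\sum_j\hm{\af}{j}{j} = 0$ (minimality) to kill the order-$\sqrt{\psi}$ piece; the residual $2\sum_j\sin^2\phi_j\, y^\af\,\hm{\af}{j}{j}|_p$ is then $\CO(\fs^2\sqrt{\psi})$ and absorbable into $\sum_j\sin^2\phi_j$ for $|\by|$ small. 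You never invoke minimality, so your argument as written cannot close.

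Two smaller points: the Codazzi identity \eqref{Codazzi3} is not needed here (it enters later, in Lemma~\ref{lem_cov2}); and your worry about errors when $\fs$ is large but $\psi$ is small is unfounded --- once you use $\bx=0$ the only error beyond the explicit terms is $\CO(|\by|^3) + \CO(\fs\cdot|\by|^2)$, both of which are absorbed by $\psi$ and $\fs^2$ respectively after one application of the AM--GM inequality.
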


\begin{proof}
Let $p\in\Sm$ be the point such that there is a normal geodesic in $U_\vep$ connecting $p$ and $q$.  To calculate $\Hess(\psi)$, take the frame $\{\me_1,\cdots,\me_n,$ $\me_{n+1},\cdots,\me_{n+m}\}$ constructed in \S\ref{sec_coordinate}. Let $\{\om^1,\cdots,\om^n,$ $\om^{n+1},\cdots,\om^{n+m}\}$ be the dual coframe.  According to \eqref{grad1} and \eqref{grad2}, $\dd\psi = 2y^\af\om^\af$, and thus $e_j(\psi)\equiv0$.  By \eqref{conn02},
\begin{align*}
&\quad \Hess(\psi)(\me_i,\me_j) \\
&= \me_i(\me_j(\psi)) - ({\namb_{\me_i}\me_j})(\psi) = -2y^\af\,\ta^\af_j(\me_i) \\
&= -2y^\af\, \hm{\af}{i}{j}\big|_p - 2y^\af x^k\,\hmd{\af}{i}{j}{k}\big|_p - 2y^\af y^\bt\, \RM{\af}{j}{\bt}{i}\big|_p - 2y^\af y^\bt\,(\hm{\af}{j}{k}\hm{\bt}{i}{k})\big|_p + \CO((|\bx|^2 + |\by|^2)^\frac{3}{2}) ~.
\end{align*}
By \eqref{vf_expansion1}, \eqref{conn02} and \eqref{conn03},
\begin{align}
\Hess(\psi)(\me_\af,\me_i)&= \me_\af(\me_i(\psi)) - ({\namb_{\me_\af}\me_i})(\psi) = -2y^\bt\,\ta^\bt_i(\me_\af) \notag \\
&= \CO(|\bx|^2 + |\by|^2) ~, \notag \\
\begin{split} \Hess(\psi)(\me_\af, \me_\bt) &= \me_\af(\me_\bt(\psi)) - ({\namb_{\me_\af}\me_\bt})(\psi) = 2\me_\af(y^\bt) - 2y^\gm\,\ta^\gm_\bt(\me_\af) \\
&= 2\dt_{\af\bt} + \CO(|\bx|^2 + |\by|^2) ~. \label{Hess_vv} \end{split}
\end{align}

We take the frame \eqref{linear1} for $L$ to evaluate $\tr_L\Hess(\psi)$; see Remark \ref{rmk_o}.  Note that all the $x^j$-coordinate of $q$ are zero.  By using $\sin\phi_j\leq\fs(L)$, $\cos\phi_j\leq1$ and the above expansions of $\Hess(\psi)$,
\begin{align*}
\tr_L\Hess(\psi)
&= \sum_j\Hess(\psi)(\cos\phi_j\,\me_j + \sin\phi_j\,\me_{n+j}, \cos\phi_j\,\me_j + \sin\phi_j\,\me_{n+j}) \\
&\geq \sum_j\left[ 2\cos^2\phi_j \left(-y^\af\, \hm{\af}{j}{j}\big|_p -y^\af y^\bt\, \RM{\af}{j}{\bt}{j}\big|_p - y^\af y^\bt\,(\hm{\af}{j}{k}\hm{\bt}{j}{k})\big|_p \right) + 2\sin^2\phi_j \right] \\
&\qquad -c'\,|\by|^3 - c''\,\fs(L)\cdot |\by|^2 \\
&\geq 2\sum_{j,\af,\bt}\left[ -y^\af y^\bt\, \RM{\af}{j}{\bt}{j}\big|_p - y^\af y^\bt\,(\hm{\af}{j}{k}\hm{\bt}{j}{k})\big|_p \right] + 2\sum_j\sin^2\phi_j \\
&\qquad + 2\sum_{j,\af,\bt}\sin^2\phi_j \left[{y^\af\, \hm{\af}{j}{j}\big|_p} + y^\af y^\bt\, \RM{\af}{j}{\bt}{j}\big|_p + y^\af y^\bt\,(\hm{\af}{j}{k}\hm{\bt}{j}{k})\big|_p  \right] - \fs^2(L) - c''' |\by|^3
\end{align*}
for some positive constants $c',c''$ and $c'''$.  For the last inequality, the minimal condition $\sum_j\hm{\af}{j}{j}|_p = 0$ has been used.  It is not hard to see that there exists $\vep'>0$ such that
$$\left| \sum_{j,\af,\bt}\sin^2\phi_j \left[{y^\af\, \hm{\af}{j}{j}\big|_p} + y^\af y^\bt\, \RM{\af}{j}{\bt}{j}\big|_p + y^\af y^\bt\,(\hm{\af}{j}{k}\hm{\bt}{j}{k})\big|_p  \right] \right| \leq \frac{1}{4}\sum_j\sin^2\phi_j$$
for $|\by|<\vep'$.  By using the strong stability condition \eqref{sstable}, this finishes the proof of the proposition.
\end{proof}

By the same argument as in \cite{ref_TsaiW}, the convexity of $\psi$ implies the following local uniqueness theorem of minimal submanifolds near $\Sm$.

\begin{thm}\label{uniqueness} (Theorem A) Let $\Sm^n\subset (M,g)$ be a compact minimal submanifold which is strongly stable in the sense of \eqref{sstable}.  Then, there exists a tubular neighborhood $U$ of $\Sm$ such that any compact minimal submanifold $\Gm$ in $U$ with $\dim\Gm\geq n$ must be contained in $\Sm$.
In other words, $\Sm$ is the only compact minimal submanifold in $U$ with dimension no less than $n$.
\end{thm}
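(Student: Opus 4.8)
The plan is to argue by the maximum principle applied to the function $\psi=\sum_\af(y^\af)^2$ restricted to $\Gm$, using the convexity of $\psi$ established in Proposition \ref{prop_cvx}. The starting point is the standard identity for the intrinsic Laplacian of an ambient function along a submanifold: for any smooth $f$ on $M$ and any submanifold $\Gm$, $\Delta_\Gm(f|_\Gm)=\tr_{T\Gm}\Hess(f)+\ip{\namb f}{\vec H_\Gm}$, where $\vec H_\Gm$ is the mean curvature vector of $\Gm$. Since $\Gm$ is minimal, $\vec H_\Gm\equiv 0$, so $\Delta_\Gm(\psi|_\Gm)=\tr_{T\Gm}\Hess(\psi)$. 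I would take $U=U_{\vep_1}$ from Proposition \ref{prop_cvx}, shrinking $\vep_1$ if necessary so that the error estimates below hold (recall $\psi<\vep_1^2$ on $U$). Because $\Gm$ is compact, $\psi|_\Gm$ attains its maximum at some $q\in\Gm$, and there $\Hess(\psi|_\Gm)$ is negative semi-definite; tracing, $\tr_{T_q\Gm}\Hess(\psi)(q)=\Delta_\Gm(\psi|_\Gm)(q)\le 0$.

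The crux is to show $\tr_{T_q\Gm}\Hess(\psi)(q)>0$ whenever $\psi(q)>0$; granting this, the inequality above forces $\psi(q)=0$, hence $\psi|_\Gm\equiv 0$, hence $\Gm\subset\{\psi=0\}\cap U=\Sm$, which is the assertion. To prove the claim, set $W=T_q\Gm$ and split $W=W'\oplus W''$ orthogonally with $W''=W\cap\CV_q$ and $W'$ its orthogonal complement inside $W$. A dimension count gives $\dim W''\ge\dim W-n$, so $r:=\dim W'\le n$ and $\pi_\CH$ is injective on $W'$. On $W''\subset\CV_q$ the vertical-block expansion $\Hess(\psi)(\me_\af,\me_\bt)=2\dt_{\af\bt}+\CO(\psi)$ (equation \eqref{Hess_vv}, with $\bx=\bo$ at $q$) gives $\tr_{W''}\Hess(\psi)\ge(\dim W'')\bigl(2-C\psi(q)\bigr)\ge\dim W''$. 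For $W'$ I would use the singular value decomposition of \S\ref{sec_linear}: an orthonormal basis $\{\te_j=\cos\phi_j\,\me_j+\sin\phi_j\,\me_{n+j}\}_{j=1}^r$ of $W'$ with $\me_j\in\CH_q$, $\me_{n+j}\in\CV_q$, $\phi_j\in[0,\pi/2)$. If $r=n$, then $W'$ is an $n$-plane and Proposition \ref{prop_cvx} applies verbatim: $\tr_{W'}\Hess(\psi)\ge c\bigl(\fs(W')^2+\psi(q)\bigr)\ge c\,\psi(q)$, which with $\tr_{W''}\Hess(\psi)\ge 0$ is positive when $\psi(q)>0$. If $r<n$, then $\dim W''\ge\dim W-n+1\ge 1$; here I would use only the crude consequences of the expansions in the proof of Proposition \ref{prop_cvx} — the horizontal--horizontal block of $\Hess(\psi)$ equals $-2y^\af\hm{\af}{i}{j}|_p+\CO(\psi)=\CO(\sqrt{\psi})$ and the mixed block is $\CO(\psi)$ — so evaluating $\Hess(\psi)$ on the $\te_j$ gives $\tr_{W'}\Hess(\psi)\ge-C\sqrt{\psi(q)}$, and then $\tr_W\Hess(\psi)\ge\dim W''-C\sqrt{\psi(q)}\ge 1-C\sqrt{\psi(q)}>0$ for $\vep_1$ small. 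This establishes the claim.

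The one genuine obstacle is the mismatch between the hypothesis $\dim\Gm\ge n$ and the fact that Proposition \ref{prop_cvx} controls $\tr_L\Hess(\psi)$ only for $n$-dimensional planes $L$. It is resolved by the orthogonal splitting $T_q\Gm=W'\oplus W''$: on the ``extra'' vertical directions $W''$ the Hessian of $\psi$ is uniformly positive definite (essentially $2\,\mathrm{Id}$), which dominates the merely $\CO(\sqrt{\psi})$ lower bound available on the $r$-dimensional complement $W'$ when $r<n$, whereas when $r=n$ Proposition \ref{prop_cvx} can be invoked directly on the $n$-plane $W'$.
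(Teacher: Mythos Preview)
Your proof is correct and follows the same strategy as the paper: apply the maximum principle to $\psi|_\Gm$ using $\Delta_\Gm\psi=\tr_{T\Gm}\Hess(\psi)$ for minimal $\Gm$, and show the trace is positive. The one difference is in the splitting of $T_q\Gm$: you take $W''=T_q\Gm\cap\CV_q$ in full, which may leave a complement $W'$ of dimension $r<n$ and forces your case split; the paper instead selects only an $(\bar n-n)$-dimensional subspace $L_\CV\subset\ker(\pi_\CH|_{T_q\Gm})$, so its orthogonal complement $L$ in $T_q\Gm$ always has dimension exactly $n$ and Proposition~\ref{prop_cvx} applies directly to $L$, with the vertical estimate \eqref{Hess_vv} handling $L_\CV$. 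The paper's decomposition avoids your $r<n$ case entirely, but your crude bound $\tr_{W'}\Hess(\psi)\ge -C\sqrt{\psi}$ there is valid and the extra vertical dimension absorbs it, so both arguments go through.
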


\begin{proof}
It basically follows from \cite[Lemma 5.1]{ref_TsaiW} and Proposition \ref{prop_cvx}.  The only point to check is that the estimate of Proposition \ref{prop_cvx} holds for dimension greater than $n$.  Namely, it remains to show that for any $q\in U_{\vep_1}$ and any $\bar{n}$-plane $\bar{L}\subset T_q M$ with $\bar{n}>n$,
\begin{align*}
\tr_{\bar{L}}\Hess(\psi) \geq c_0
\end{align*}
for some positive constant $c_0$.

The argument is similar to Remark \ref{rmk_linear}.  Pick an $(\bar{n}-n)$-subspace of $\ker(\pi_\CH:\bar{L}\to\CH_q)$.  Denote it by $L_\CV$.  Note that $L_\CV$ belong to $\CV_q$.  Let $L$ be the orthogonal complement of $L_\CV$ in $\bar{L}$.  The dimension of $L$ is $n$.  By Proposition \ref{prop_cvx} and \eqref{Hess_vv}, the trace of the Hessian of $\psi$ over $\bar{L}$ has the following lower bound:
\begin{align*}
\tr_{\bar{L}}\Hess(\psi) &= \tr_{L}\Hess(\psi) + \tr_{L_\CV}\Hess(\psi) \\
&\geq c\,\psi(q) + \left(2(\bar{n} - n) - c'\psi(q)\right) ~.
\end{align*}
Thus, the quantity is positive when $\psi(q)$ is sufficiently small.
\end{proof}

\section{Further estimates needed for the stability theorem}

From now on, $\Sigma$ is taken to be an \emph{oriented}, \emph{strongly stable} minimal submanifold, and we see in the last section that the distance function $\psi$ to $\Sigma$ defined on $U_\vep$ satisfies a convexity condition.

To study the dynamical stability of mean curvature flows near $\Sigma$, we need to measure how close a nearby submanifold is to $\Sigma$. The distance function $\psi$ gives such a measurement in $C^0$. In order to obtain measurements in higher derivatives, we extend the volume form and the second fundamental form of $\Sigma$ to the tubular neighborhood $U_\vep$. In particular, in \S\ref{sec_linear}, the volume form of $\Sm$ is extended to an $n$-form $\Om$ on $U_\vep$. The restriction of $\Omega$ to another $n$-dimensional submanifold $\Gamma$, which is denoted by $*\Omega$, measures how close $\Gamma$ is to $\Sigma$ in $C^1$.  The evolution equation of $*\Omega$ along the mean curvature flow plays an essential role for the estimates.  The equation naturally involves the restriction of the covariant derivatives/second covariant derivatives of $\Omega$ on $\Gamma$.  In this section, we derive estimates of these quantities in preparation for the proof of the stability theorem.

\subsection{Extension of auxiliary tensors to $U_\vep$}
We adopt the frame and coordinate constructed in \S\ref{sec_local}.

The second fundamental form of $\Sm$ can also be extended to $U_\vep$ by parallel transport along normal geodesics, as explained in \S\ref{sec_linear}.  Denote the extension by $\II^\Sm$, which, in terms of the frames, is given by
\begin{align}
\II^\Sm = \hm{\af}{i}{j}\,\om^i\ot\om^j\ot\me_\af ~.
\label{ext_II} \end{align}
In other words, for any $q\in U_\vep$,  $\hm{\af}{i}{j}(q)=\hm{\af}{i}{j}(p)$ where $p\in\Sm$ is the unique point such that there is a normal geodesic in $U_\vep$ connecting $p$ and $q$, see Definition \ref{u_epsilon}.  To avoid introducing more notations, we use the metric $g$ to lower the indices of $\II^\Sm$, and then $\II^\Sm = \hm{\af}{i}{j}\,\me_i\ot\me_j\ot\me_\af$.

Suppose that $\Gm$ is an oriented, $n$-dimensional submanifold in $U_\vep\subset M$ with $\Om(\Gm)>0$.  With the above extension, we can compare the second fundamental form of $\Gm$ with that of $\Sm$.  For any $q\in\Gm$, choose a local orthonormal frame $\{\te_1,\cdots,\te_n$, $\te_{n+1},\cdots,\te_{n+m}\}$ on a neighborhood of $q$ in $M$ such that the restriction of $\{\te_1,\cdots, \te_n\}$ on $\Gm$ form an oriented frame for $T\Gm$, and the restriction of $\{\te_{n+1},\cdots,\te_{n+m}\}$ on $\Gm$ form a frame for $\N\Gm$.  With this, the second fundamental form of $\Gm$ is
\begin{align}
\II^\Gm = \htd{\af}{i}{j}\,\te_i\ot\te_j\ot\te_\af \quad\text{where}\quad \htd{\af}{i}{j} = \ip{\namb_{\te_i}\te_j}{\te_\af} ~.
\label{second_Gm} \end{align}
As explained in \S\ref{sec_linear}, we may assume that these frames are of the form \eqref{linear1} and \eqref{linear2} at $q$.  The inverse transform reads
\begin{align}
\me_j = \cos\phi_j\,\te_j - \sin\phi_j\,\te_{n+j}  \quad\text{and}\quad  \me_\af = \sin\phi_\af\,\te_{\af-n} + \cos\phi_\af\,\te_\af ~.
\label{inverse1} \end{align}
It follows that
\begin{align}
\II^\Sm\big|_q &= \hm{\af}{i}{j} (p)\,(\cos\phi_i\,\te_i - \sin\phi_i\,\te_{n+i})\ot(\cos\phi_j\,\te_j - \sin\phi_j\,\te_{n+j})\ot(\sin\phi_\af\,\te_{\af-n} + \cos\phi_\af\,\te_\af).
\label{second_Sm} \end{align} 
Hence,
\begin{align}
\ip{\II^\Gm}{\II^\Sm}\big|_q &= \sum_{\af,i,j}\left( \cos\phi_i\cos\phi_j\cos\phi_\af\,\htd{\af}{i}{j}\,\hm{\af}{i}{j} (p) \right) ~.
\label{pairing1}\end{align}
In the above expression, $\hm{\af}{i}{j} (p)$ depends only on $p\in \Sigma$, while $\phi_i, \phi_\alpha$, and $\htd{\af}{i}{j}$ all depend
on $q\in \Gamma$.

We extend  another tensor which is related to the strong stability condition \eqref{sstable}.  Consider the parallel transport of the following tensor on $\Sm$ along normal geodesics:
\begin{align*}
\left(\RM{\af}{i}{\bt}{j} + \hm{\af}{i}{k}\hm{\bt}{j}{k}\right)\,(\om^i\ot\om^j)\ot(\me^\af\ot\me^\bt) ~,
\end{align*} which is considered to be defined on $U_\vep$. 
Pairing the last component with $\namb\psi/2$ produces a tensor of the same type as $\II^\Sm$, which is denoted by $S^\Sm$:
\begin{align}
S^\Sm|_q &= y^\bt\left(\RM{\af}{i}{\bt}{j}(p) + (\hm{\af}{i}{k}\hm{\bt}{j}{k})(p)\right)\,\om^i\ot\om^j\ot\me_\af
\label{ext_S} \end{align}
where $p\in\Sm$ is the point such that there is a unique normal geodesic in $U_\vep$ connecting $p$ and $q$.   Similarly,
\begin{align}
\ip{\II^\Gm}{S^\Sm}\big|_{q} &= \sum_{\af,\bt,i,j}\left( \cos\phi_i\cos\phi_j\cos\phi_\af\,\htd{\af}{i}{j}\,y^\bt\big(\RM{\af}{i}{\bt}{j}(p) + \sum_k(\hm{\af}{i}{k}\hm{\bt}{j}{k})(p)\big) \right) ~.
\label{pairing2} \end{align}

Again in the above expression, $ \RM{\af}{i}{\bt}{j}(p) + \sum_k(\hm{\af}{i}{k}\hm{\bt}{j}{k})(p)$ depends only on $p\in \Sigma$, while $\phi_i, \phi_\alpha$,$y^\bt$, and $\htd{\af}{i}{j}$ all depend on $q\in \Gamma$.

In the rest of this subsection, we assume $\Om(T_q\Gm) > \oh$ and estimate $\ip{\II^\Gm}{\II^\Sm}\big|_q$ and $\ip{\II^\Gm}{S^\Sm}\big|_{q}$.
We assume that $T_q\Gm$ has an oriented frame of the form \eqref{linear1}, and $\N_q\Gm$ has a frame of the form \eqref{linear2}.  Since $\Om(T_q\Gm) > \oh$, it follows from \eqref{linear3} that
\begin{align*}
\cos\phi_j \geq \cos\phi_1\cdots\cos\phi_n >\oh \quad\text{for }j\in\{1,\ldots,n\} ~.
\end{align*}
{In particular, $\sin^2\phi_j=1-\cos^2\phi_j<3/4$ for each $j$.  It can be checked directly that a real number $x$ with $x^2<\frac{3}{4}$ satisfies the inequalities $1-\sqrt{1-x^2}< \frac{2}{3} x^2$ and $\frac{x^2}{\sqrt{1-x^2}}<2x^2$. Therefore, 
\begin{align} \begin{split}
&0 < 1 - \cos\phi_j = 1 - \sqrt{1-\sin^2\phi_j} < \frac{2}{3} \left(\fs(T_q\Gm)\right)^2 ~, \\
&\left| \frac{1}{\cos\phi_j} - \cos\phi_j \right| = \frac{\sin^2\phi_j}{\cos\phi_j} < 2\left(\fs(T_q\Gm)\right)^2.
\end{split} \label{est6} \end{align}}
Suppose that $\fs$ in \eqref{linear4} is achieved at $\phi_1$, and then
\begin{align}
& \Om(T_q\Gm) \leq \cos\phi_1 = \sqrt{1 - \sin^2\phi_1} \leq 1 - \oh\sin^2\phi_1  \notag \\
\Rightarrow~ & 1-\Om(T_q\Gm) \geq \oh \left(\fs(T_q\Gm)\right)^2 ~.
\label{boundfs}\end{align}
On the other hand,
\begin{align}
1-\Om(T_q\Gm) &\leq 1 - (\Om(T_q\Gm))^2 = 1 - \prod_{j=1}^n(1-\sin^2\phi_j) \leq c(n)\,(\fs(T_q\Gm))^2
\label{boundfs1} \end{align}
for some dimensional constant $c(n)$.

Applying the estimate \eqref{est6} to \eqref{pairing1} and \eqref{pairing2}, we obtain
\begin{align} \begin{split}
\left| \ip{\II^\Gm}{\II^\Sm}\big|_q - \sum_{\af,i,j} \htd{\af}{i}{j}\,\hm{\af}{i}{j}(p) \right| &\leq c\left(\fs(q)\right)^2 |\II^\Gm|  ~, \\
\left| \ip{\II^\Gm}{S^\Sm}\big|_{q} - \sum_{\af,\bt,i,j}\left( \htd{\af}{i}{j}\,y^\bt\big(\RM{\af}{i}{\bt}{j}(p) + \sum_k(\hm{\af}{i}{k}\hm{\bt}{j}{k})(p)\big) \right) \right| &\leq c\left(\fs(q)\right)^2 \sqrt{\psi(q)}\, |\II^\Gm|
\end{split} \label{est12} \end{align}
for some constant $c$ depending on the geometry of $\Sm$ and $M$.

\subsection{Estimates involving the derivatives of $\Om$}
In this subsection, we derive estimates that involve derivatives of $\Om$, which are needed in the proof of Theorem B.  In the following three lemmas, we estimate quantities that appear naturally in the evolution equation of $*\Omega$ \eqref{*Omega}.

Let $\Gm$ be an $n$-dimensional submanifold in the tubular neighborhood of $\Sm$.
The function $*\Om$ is the Hodge star of $\Om|_\Gm$ with respect to the induced metric on $\Gm$, and is the same as $\Om(T_q\Gm)$. We assume throughout this subsection that  $*\Om(q)>\oh$ for any $q\in\Gm$.  For each $q\in\Gm$, let $p\in\Sm$ be the point such that there is a unique normal geodesic in $U_\vep$ connecting $p$ and $q$; see Definition \ref{u_epsilon}.  We use the coordinate and frame constructed in \S\ref{sec_notation} to carry out the computation.  Moreover, we assume that $T_q\Gm$ has an oriented frame of the form \eqref{linear1}, and $\N_q\Gm$ has a frame of the form \eqref{linear2}.
For $q\in \Gm$, $\psi (q)$ is a $\CC^0$ order quantity.  $*\Omega (q)$ and $\fs(q)$ are both $\CC^1$ order quantities that depend on the tangent space $T_q\Gm$ at $q$, where $\fs(q)=\fs(T_q\Gm)$ is defined in \eqref{linear4}.

\subsubsection{The restriction of the derivative of $\Om$ to $\Gamma$}

To compute $\namb\Om$, it is convenient to introduce the following shorthand notations:
\begin{align}
\Om^j &= \iota(\me_j)\Om = (-1)^{j+1}\om^1\w\cdots\w\widehat{\om^j}\w\cdots\w\om^n ~, \label{Om1} \\
\Om^{jk} &= \iota(\me_k)\iota(\me_j)\Om = \begin{cases}
(-1)^{j+k}\,\om^1\w\cdots\w\widehat{\om^k}\w\cdots\w\widehat{\om^j}\w\cdots\w\om^n   &\text{if } k<j ~, \\
(-1)^{j+k+1}\,\om^1\w\cdots\w\widehat{\om^j}\w\cdots\w\widehat{\om^k}\w\cdots\w\om^n   &\text{if } k>j ~.
\end{cases} \label{Om2}
\end{align}
The covariant derivative of $\Om$ is
\begin{align}
\namb\Om &= (\nabla\om^j)\w\Om^j = \ta^{\af}_j\ot(\om^{\af}\w\Om^j) ~.
\label{cov1} \end{align}

\begin{lem} \label{lem_cov1}
Let $\Sm^n\subset(M,g)$ be a compact, oriented minimal submanifold.  Then, there exist a positive constant $c$ which depends on the geometry of $M$ and $\Sm$ and which has the following property.  Suppose that $\Gm\subset U_{\vep}$ is an oriented $n$-dimensional submanifold with $*\Om(q)>\oh$ for any $q\in\Gm$. Then,
\begin{align*}
&\quad \left| \sum_{\af,j,k}\left[(-1)^j\,\htd{\af}{j}{k}\,(\namb_{\te_k}\Om)(\te_\af,\te_1,\cdots,\widehat{\,\te_j\,},\cdots,\te_n)\right] + (*\Om)\,\ip{\II^\Gm}{\II^\Sm + S^\Sm} \right|\\
&\leq c\left( \big(\fs(q)\big)^2 + \psi(q) \right)\! \big|\II^\Gm\!\big|
\end{align*}
at any $q\in\Gm$.  The summation is indeed a contraction between $\II^\Gm$ and $\namb\Om$, and is independent of the choice of the orthonormal frame.
\end{lem}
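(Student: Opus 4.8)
The plan is to expand the contraction explicitly, using the connection-form expansions of Proposition \ref{exp_connection} for $\namb\Om$ together with the singular-value normal form \eqref{linear1}--\eqref{linear2} and the algebraic estimates \eqref{linear5}--\eqref{linear9} for the evaluation of $n$-forms on the frame of $\Gm$, and then to match the leading term against $\ip{\II^\Gm}{\II^\Sm+S^\Sm}$ via \eqref{est12}.

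First I would fix $q\in\Gm$, let $p\in\Sm$ be the foot of the normal geodesic through $q$, and work in the geodesic coordinates and frame of \S\ref{sec_coordinate} centered at $p$; then $q$ has coordinates $(\bo,\by)$ with $|\by|^2=\psi(q)$, so every $x$-linear term in the expansions of Proposition \ref{exp_connection} drops out. By \eqref{cov1}, $\namb\Om=\ta^\gm_l\otimes(\om^\gm\w\Om^l)$ with $\gm$ a normal index and $l$ tangential, so the contraction $\sum_{\af,j,k}(-1)^j\htd{\af}{j}{k}(\namb_{\te_k}\Om)(\te_\af,\te_1,\dots,\widehat{\te_j},\dots,\te_n)$ unfolds into $\sum_{\af,j,k,\gm,l}(-1)^j\htd{\af}{j}{k}\,\ta^\gm_l(\te_k)\,(\om^\gm\w\Om^l)(\te_\af,\te_1,\dots,\widehat{\te_j},\dots,\te_n)$. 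Expanding each $\te_A$ in the $\me$-frame via \eqref{linear1}--\eqref{linear2} and carrying out the bookkeeping, the $n$-form factor $(\om^\gm\w\Om^l)(\te_\af,\dots)$ vanishes except in the principal slice $\{\,l=j,\ \gm=\af\,\}$ --- where \eqref{linear9} gives it the value $(-1)^{j+1}\frac{\cos\phi_\af}{\cos\phi_j}(*\Om)$ up to $\CO(\fs^2)$ --- and possibly in the slice $\{\,l\neq j,\ \gm=n+l,\ \af=n+j\,\}$, where it is $\CO(\fs^2)$. Since all connection coefficients are bounded and $|\htd{\af}{j}{k}|\le|\II^\Gm|$, the latter slice contributes only $\CO(\fs^2|\II^\Gm|)$, and I am left with the principal slice.

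On the principal slice I would write $\ta^\af_j(\te_k)=\cos\phi_k\,\ta^\af_j(\me_k)+\sin\phi_k\,\ta^\af_j(\me_{n+k})$; by \eqref{conn03} the second term is $\CO(\sqrt\psi)$, so after multiplying by bounded factors its total contribution is $\CO(\fs\sqrt\psi\,|\II^\Gm|)\subset\CO((\fs^2+\psi)|\II^\Gm|)$ via $\fs\sqrt\psi\le\tfrac{1}{2}(\fs^2+\psi)$. For the remaining piece, \eqref{conn02} at $\bx=\bo$ gives $\ta^\af_j(\me_k)=\hm{\af}{j}{k}(p)+y^\bt\bigl(\RM{\af}{j}{\bt}{k}(p)+\sum_a\hm{\af}{j}{a}\hm{\bt}{k}{a}(p)\bigr)+\CO(\psi)$, while $\cos\phi_k\cos\phi_\af/\cos\phi_j=1+\CO(\fs^2)$ by \eqref{est6}; collecting the signs $(-1)^j(-1)^{j+1}=-1$, the whole contraction equals $-(*\Om)\bigl[\sum_{\af,j,k}\htd{\af}{j}{k}\hm{\af}{j}{k}(p)+\sum_{\af,\bt,j,k}\htd{\af}{j}{k}\,y^\bt(\RM{\af}{j}{\bt}{k}(p)+\sum_a\hm{\af}{j}{a}\hm{\bt}{k}{a}(p))\bigr]$ modulo $\CO((\fs^2+\psi)|\II^\Gm|)$. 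By the estimates \eqref{est12} the two sums in brackets are, up to $\CO((\fs^2+\psi)|\II^\Gm|)$, equal to $\ip{\II^\Gm}{\II^\Sm}\big|_q$ and $\ip{\II^\Gm}{S^\Sm}\big|_q$; hence the contraction equals $-(*\Om)\ip{\II^\Gm}{\II^\Sm+S^\Sm}$ up to $\CO((\fs^2+\psi)|\II^\Gm|)$, which is the asserted estimate. That the original expression is a genuine contraction of $\II^\Gm$ against $\namb\Om$ --- hence independent of the chosen frame --- is clear once one observes that it is assembled by contracting the tensors $\II^\Gm$ and $\namb\Om$ with the induced metric and volume form of $\Gm$.

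The combinatorial heart of the second step --- pinning down exactly which $(\gm,l)$-slices of $\om^\gm\w\Om^l$ survive on $(\te_\af,\te_1,\dots,\widehat{\te_j},\dots,\te_n)$ and verifying that every surviving non-principal slice is genuinely $\CO(\fs^2)$ rather than merely $\CO(\fs)$ --- is, I expect, the main obstacle: the crude bounds in \eqref{linear6} are not by themselves sharp enough for the off-diagonal slices, so one must either expand directly from \eqref{linear1}--\eqref{linear2} or refine \eqref{linear6} in the manner of \eqref{linear7}--\eqref{linear9}. A secondary but pervasive point is to ensure that each mixed error $\fs^a\psi^{b/2}|\II^\Gm|$ produced by products of $\sin\phi$'s and powers of $|\by|$ lands inside $\CO((\fs^2+\psi)|\II^\Gm|)$; this always holds because in every such term either $a\ge2$ or $a\ge1$ and $b\ge1$, so $\fs^a\psi^{b/2}\lesssim\fs^2+\psi$ for $\fs,\psi$ small.
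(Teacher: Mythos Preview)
Your proposal is correct and follows essentially the same route as the paper's proof: expand $\namb\Om=\ta^\gm_l\otimes(\om^\gm\w\Om^l)$ via \eqref{cov1}, isolate the principal slice $l=j,\ \gm=\af$ using \eqref{linear9}, then feed in the expansion \eqref{conn02} of $\ta^\af_j(\te_k)$ (this is the paper's \eqref{est1}), simplify the angle factors via \eqref{est6}, and invoke \eqref{est12} to recognise $\ip{\II^\Gm}{\II^\Sm+S^\Sm}$. Your slice analysis is in fact sharper than what the paper writes out: for $l=j$ and $\gm\neq\af$ the form $(\om^\gm\w\Om^j)(\te_\af,\te_1,\dots,\widehat{\te_j},\dots,\te_n)$ vanishes \emph{identically} (two rows of the evaluation matrix are supported in the same column), and for $l\neq j$ it is nonzero only when $\af=n+j$ and $\gm=n+l$, with value $\pm\sin\phi_j\sin\phi_l\prod_{i\neq j,l}\cos\phi_i=\CO(\fs^2)$ --- so the ``main obstacle'' you flag is easily cleared.
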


\begin{proof}
By \eqref{cov1}, \eqref{conn02} and \eqref{linear9},
\begin{align*}
& \left| \sum_{\af,j,k}\left[(-1)^j\,\htd{\af}{j}{k}\,(\namb_{\te_k}\Om)(\te_\af,\te_1,\cdots,\widehat{\,\te_j\,},\cdots,\te_n)\right]\Big|_q + \sum_{\af,j,k}\left[\htd{\af}{j}{k}\ta^\af_j(\te_k)\frac{\cos\phi_\af}{\cos\phi_j}(*\Om)\right] \right| \\
\leq\,& c_0\big(\fs(q)\big)^2\big|\II^\Gm\!\big| ~.
\end{align*}
According to \eqref{conn02}, \eqref{linear1} and the fact that the $x^j$-coordinates of $q$ are all zero,
\begin{align}
\left| \ta^\af_j(\te_k)\big|_q - \cos\phi_k\,\hm{\af}{j}{k}(p) - \cos\phi_k\, y^\bt\,(\RM{\af}{j}{\bt}{k} + \hm{\af}{j}{l}\hm{\bt}{k}{l})(p) \right| &\leq c_1\left( \big(\fs(q)\big)^2 + \psi(q) \right)
\label{est1} \end{align}
at $q$.  Due to \eqref{est6},
\begin{align*}
\left| \frac{\cos\phi_k\cos\phi_\af}{\cos\phi_j} - 1 \right| &\leq 100\,\big(\fs(q)\big)^2 ~.
\end{align*}
Putting these together with \eqref{est12} finishes the proof of this lemma.
\end{proof}

\subsubsection{The restriction of the second derivative of $\Om$ to $\Gamma$}

We now compute $\nabla^2\Om$, which is a section of $(T^*M\ot T^*M)\ot\Ld^n(T^*M)$.
Since
\begin{align*}
\namb \om^{\af} &= -\ta^{\af}_i\ot\om^i - \ta^{\af}_{\bt}\ot\om^{\bt} \quad\text{and} \\
\namb \Om^j &= (\namb\om^k)\w\Om^{jk} = \ta_j^k\ot\Om^k + \ta_k^{\af}\ot(\om^{\af}\w\Om^{jk}) ~,
\end{align*}
the covariant derivative of \eqref{cov1} is
\begin{align} \begin{split}
\nabla^2\Om &= -(\ta^{\af}_i\ot\ta^{\af}_i)\ot\Om + (\ta^{\af}_k\ot\ta^{\bt}_j)\ot(\om^{\bt}\w\om^{\af}\w\Om^{jk}) \\
&\quad + \left( \namb\ta^{\af}_i + \ta^{\af}_{\bt}\ot\ta^{\bt}_i + \ta^i_k\ot\ta^{\af}_k \right)\ot(\om^{\af}\w\Om^i)
\end{split} \label{cov2} \end{align}
where $\namb\ta_i^\af$ is the covariant derivative of a local section of $T^*M$.

\begin{lem} \label{lem_cov2}
Let $\Sm^n\subset(M,g)$ be a compact, oriented minimal submanifold.  Then there exists a positive constant $c$ which depends on the geometry of $M$ and $\Sm$ and which has the following property.  Suppose that $\Gm\subset U_{\vep}$ is an oriented $n$-dimensional submanifold with $*\Om(q)>\oh$ for any $q\in\Gm$.  Then,
\begin{align*}
&\quad \left| \sum_k\left[(\namb^2_{\te_k,\te_k}\Om)(\te_1,\cdots,\te_n)\right] - \sum_{\af,i,k}\left[(-1)^i\Om(\te_\af,\te_1,\cdots,\widehat{\,\te_i\,},\cdots,\te_n)\RTD{\af}{k}{k}{i}\right] + (*\Om)\,\left|\II^\Sm\! + S^\Sm\right|^2\right| \\
&\leq c\left( \fs^2(q)  + \psi(q) \right)
\end{align*}
at any $q\in\Gm$, where $\RTD{\af}{k}{k}{j} = \ip{R(\te_k,\te_j)\te_k}{\te_\af}$ are components of the restriction of the curvature tensor of $M$ along $\Gamma$.   Note that the two summations are independent of the choice of the orthonormal frame.
\end{lem}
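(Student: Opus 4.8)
\textbf{Proof proposal for Lemma \ref{lem_cov2}.}

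The plan is to start from the general formula \eqref{cov2} for $\nabla^2\Om$ and evaluate the two contractions appearing in the statement, keeping track of which terms survive to lowest order. First I would substitute the tilde-frame \eqref{linear1}--\eqref{linear2} into \eqref{cov2}, contracting the first $T^*M\ot T^*M$ slot twice against $\te_k$ and summing, and the $\Ld^n(T^*M)$ slot against $(\te_1,\ldots,\te_n)$. The three groups of terms in \eqref{cov2} must be handled separately: (a) the $-(\ta^\af_i\ot\ta^\af_i)\ot\Om$ term, when evaluated on $(\te_1,\ldots,\te_n)$, produces $-(*\Om)\sum_{\af,k}(\ta^\af_i(\te_k))^2$ up to factors $\prod\cos\phi_j$; by \eqref{est1} and the estimate just below it, $\ta^\af_i(\te_k)|_q$ agrees with $\cos\phi_k(\hm{\af}{i}{k}+y^\bt(\RM{\af}{i}{\bt}{k}+\hm{\af}{i}{l}\hm{\bt}{k}{l}))(p)$ up to $\CO(\fs^2+\psi)$, so summing in $i,k$ recovers exactly $-(*\Om)|\II^\Sm+S^\Sm|^2$ modulo the allowed error (here I use $|\II^\Gm|$ is bounded once $*\Om>1/2$, or more carefully that the cross terms are quadratic and the frame factors cost only $\fs^2$). (b) The middle term $(\ta^\af_k\ot\ta^\bt_j)\ot(\om^\bt\w\om^\af\w\Om^{jk})$ involves $\om^\bt\w\om^\af\w\Om^{jk}$ evaluated on an $n$-tuple; by the estimates \eqref{linear6} (fifth inequality) this is $\CO(\fs^2)$, and since $\ta^\af_k(\te\cdot)$ is bounded, the whole term is $\CO(\fs^2)$. (c) The last group, $(\namb\ta^\af_i+\ta^\af_\bt\ot\ta^\bt_i+\ta^i_k\ot\ta^\af_k)\ot(\om^\af\w\Om^i)$: here $\om^\af\w\Om^i$ evaluated on $(\te_1,\ldots,\te_n)$ is $\CO(1)$ only when $\af=n+i$ (it equals $\pm\tan\phi_i\prod\cos\phi_k$, see \eqref{linear8}, hence is actually $\CO(\fs)$), while the contraction in the first $T^*M$ slot is against $\te_k$ twice — so what appears is $\sum_k(\namb^2_{\te_k,\te_k}\ta^\af_i + \ldots)$.

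The heart of the matter is to show that this last group of terms contributes precisely the curvature term $\sum_{\af,i,k}(-1)^i\Om(\te_\af,\te_1,\ldots,\widehat{\te_i},\ldots,\te_n)\RTD{\af}{k}{k}{i}$ modulo $\CO(\fs^2+\psi)$. For this I would invoke the curvature identity \eqref{curv}: $\sum_k(\te_k(\ta^\af_i(\te_k))-\te_k(\ta^\af_k(\te_i)))$ plus connection-quadratic corrections equals $\sum_k\RTD{\af}{i}{k}{k}$, which up to sign and symmetry is $-\sum_k\RTD{\af}{k}{k}{i}$. Thus $\sum_k\namb^2_{\te_k,\te_k}\ta^\af_i$ differs from the trace of the Hessian-type term $\sum_k\te_k(\ta^\af_k(\te_i))$ (which one wants to discard as higher order, or which actually contributes) by exactly a curvature trace. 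The subtle bookkeeping is that the coefficient $\om^\af\w\Om^i$ restricted to $\Gm$ is itself only $\CO(\fs)$ unless one is careful, so one needs the \emph{refined} estimate — the coefficient $(-1)^i\Om(\te_\af,\ldots,\widehat{\te_i},\ldots)$ against which $\RTD{\af}{k}{k}{i}$ is paired in the target expression is the honest first-order quantity, and the claim is that replacing $\om^\af\w\Om^i$ by this and everything else by its leading term costs only $\CO(\fs^2+\psi)$ because $\psi$ and $\fs^2$ control the difference between $\namb$ along $M$ and along $\Gm$, the difference between $\ta^\af_i(\te_k)$ and its model value, and the discrepancy between the induced frame and \eqref{linear1}.

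The main obstacle I expect is the second-derivative group (c): unlike in Lemma \ref{lem_cov1}, one now differentiates the connection $1$-forms twice, and one must convert $\sum_k\namb^2_{\te_k,\te_k}\ta^\af_i$ into a curvature term cleanly. The danger is an uncontrolled term of the form $\sum_k\te_k(\ta^\af_k(\te_i))$ (the ``divergence of the mean curvature direction''), which is \emph{not} obviously $\CO(\fs^2+\psi)$. The resolution should be that, because $\Om^i = \iota(\me_i)\Om$ and $\namb\Om^i$ reintroduces a factor $\om^\af\w(\cdots)$ that is $\CO(\fs)$ off the diagonal $\af=n+i$, this term either cancels against a piece of $\ta^i_k\ot\ta^\af_k$ or is absorbed; concretely one should reorganize the group-(c) contraction as $\sum_i(-1)^i\Om(\te_{n+i},\te_1,\ldots,\widehat{\te_i},\ldots,\te_n)\cdot[\text{trace of }\namb^2\ta]$ and then only the antisymmetrized (curvature) part of the bracket survives after using \eqref{geod02}--\eqref{geod06}-type identities and $H^\Sm=0$. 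Once that reduction is in place, everything else is a matter of collecting the $\CO(\fs^2)$ and $\CO(\psi)$ errors exactly as in the proof of Lemma \ref{lem_cov1}, using \eqref{est1}, \eqref{est6}, \eqref{linear6}, \eqref{linear8}, and \eqref{linear9}.
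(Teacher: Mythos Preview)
Your treatment of the first two groups of terms in \eqref{cov2} is essentially the paper's; in particular the identification of the $-(\ta^\af_i\ot\ta^\af_i)\ot\Om$ contribution with $-(*\Om)|\II^\Sm+S^\Sm|^2$ via \eqref{est1} is exactly what the paper does (their estimates \eqref{est2}--\eqref{est3} and the resulting \eqref{est10}).

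The gap is in your handling of group (c), and it is a genuine one. You propose to obtain the curvature term $\RTD{\af}{k}{k}{i}$ from $(\namb\ta^\af_i)(\te_k,\te_k)$ by invoking the structure equation \eqref{curv} and antisymmetrizing. This does not work: the connection $1$-forms $\ta^\af_i$ are those of the $e$-frame, so any curvature that \eqref{curv} produces would be $\ip{R(\cdot,\cdot)e_i}{e_\af}$, not $\RTD{\af}{k}{k}{i}=\ip{R(\te_k,\te_i)\te_k}{\te_\af}$; the frames are mixed. The ``divergence'' term you flag, $\sum_k\te_k(\ta^\af_k(\te_i))$, is exactly the obstruction to this antisymmetrization, and it has no natural cancellation along the lines you sketch.

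The paper takes a different and more direct route. It never tries to extract curvature from $(\namb\ta^\af_i)$. Instead, it computes $(\namb\ta^\af_i)(\te_k,\te_k)$ by brute force from the Taylor expansions in Proposition~\ref{exp_connection}: writing $\ta^\af_i=\ta^\af_i(e_j)\om^j+\ta^\af_i(e_\bt)\om^\bt$ and differentiating \eqref{conn02} gives $\dd(\ta^\af_i(e_j))\approx \hmd{\af}{i}{j}{k}(p)\,\om^k$ to leading order, so that $(\namb\ta^\af_i)(\te_k,\te_k)\approx\cos^2\phi_k\,\hmd{\af}{i}{k}{k}(p)$ up to $\CO(\fs+\sqrt\psi)$. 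Paired against $(\om^\af\w\Om^i)(\te_1,\ldots,\te_n)$, which by \eqref{linear8} is $\CO(\fs)$, the contribution of group~(c) is $(*\Om)\sin\phi_i\,\hmd{(n+i)}{i}{k}{k}(p)$ modulo $\CO(\fs^2+\psi)$. The curvature sum $\sum(-1)^i\Om(\te_\af,\ldots)\RTD{\af}{k}{k}{i}$ is then computed \emph{separately} using \eqref{linear7}; to leading order it is $-(*\Om)\sin\phi_i\,\RM{(n+i)}{k}{k}{i}(p)$. The two expressions match because of the traced Codazzi equation \eqref{Codazzi3} for the \emph{minimal} submanifold $\Sm$: $\RM{\af}{k}{k}{i}(p)=\hmd{\af}{i}{k}{k}(p)$. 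This is the missing ingredient in your outline --- minimality enters not through ``$H^\Sm=0$'' inserted into some divergence, but through Codazzi, which converts the covariant derivative of $\II^\Sm$ that actually appears into the ambient curvature component that the statement asks for.
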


\begin{proof}

We examine the components on the right hand side of \eqref{cov2}.  Due to \eqref{conn02} and \eqref{conn03},
\begin{align} \begin{split}
|\ta_i^j(\te_k)| &\leq c_2\left( \fs^2(q) + \sqrt{\psi(q)} \right) ~, \\
|\ta_\af^\bt(\te_k)| &\leq c_2\left( \fs^2(q) + \sqrt{\psi(q)} \right)
\end{split} \label{est4} \end{align}
for any $i,j,k\in\{1,\ldots,n\}$ and $\af,\bt\in\{n+1,\ldots,n+m\}$.  With \eqref{est1} and the third and fifth line of \eqref{linear6},
\begin{align} \begin{split}
\left| \big(\ta_j^\af (\te_k)\big)\, \big(\ta_i^\bt (\te_k)\big)\, \Big((\om^\bt\w\om^\af\w\Om^{ij})(\te_1,\cdots,\te_n)\Big) \right| &\leq c_3\big(\fs(q)\big)^2 ~, \\
\left| \big(\ta_\af^\bt(\te_k)\big)\, \big(\ta_j^\bt(\te_k)\big)\, \Big((\om^\af\w\Om^{j})(\te_1,\cdots,\te_n)\Big) \right| &\leq c_3\left( \big(\fs(q)\big)^2 + \psi(q) \right)~, \\
\left| \big(\ta_i^j (\te_k)\big)\, \big(\ta_i^\af(\te_k)\big)\, \Big((\om^\af\w\Om^{j})(\te_1,\cdots,\te_n)\Big) \right| &\leq c_3\left( \big(\fs(q)\big)^2 + \psi(q) \right) ~.
\end{split} \label{est2} \end{align}
According \eqref{est6} and the triangle inequality, we may replace $\cos\phi_k$ by $1$ in \eqref{est1}, and the error term is of the same order.  It follows that
\begin{align}
\left| \sum_{\af,j,k}\big(\ta^\af_j(\te_k)\big)^2 - |\II^\Sm+S^\Sm|^2 \right| &\leq c_4\left( \left(\fs(q)\right)^2 + \psi(q) \right) ~.
\label{est3} \end{align}
To estimate $(\namb^2_{\te_k,\te_k}\Om)(\te_1,\cdots,\te_n)$, apply the triangle inequality on the right hand side of $\eqref{cov2}$.  Due to \eqref{est2}, all terms, except the contribution from $-(\ta_i^\af\ot\ta_i^\af)\ot\Om$ and $(\namb\ta^\af_i)\ot(\om^\af\w\Om^i)$, can be bounded by some multiple of $\big(\fs(q)\big)^2 + \psi(q)$.  The term $\sum_{i,k}(\ta_i^\af(\te_k))^2\,\Om(\te_1,\cdots\,\te_n)$ is estimated by \eqref{est3}.  Hence,
\begin{align} \begin{split}
&\left| \left[(\namb^2_{\te_k,\te_k}\Om)(\te_1,\cdots,\te_n)\right] - \left((\namb\ta_i^\af)(\te_k,\te_k)\right)\left((\om^\af\w\Om^i)(\te_1,\cdots,\te_n)\right) + (*\Om)\,\left|\II^\Sm\! + S^\Sm\right|^2\right| \\
\leq\;& c_5 \left( \left(\fs(q)\right)^2 + \psi(q) \right) ~.
\end{split} \label{est10} \end{align}

The next step is to compute $\namb\ta^\af_i$:
\begin{align}
\ta_i^\af &= \ta_i^\af(e_j)\,\om^j + \ta_i^\af(e_\bt)\,\om^\bt  \notag \\
\Rightarrow~ \namb\ta_i^\af &= \dd(\ta_i^\af(e_j))\ot\om^j + \dd(\ta_i^\af(e_\bt))\ot\om^\bt + \ta_i^\af(e_j)\,\namb\om^j + \ta_i^\af(e_\bt)\,\namb\om^\bt ~.  \label{est7}
\end{align}
By \eqref{est4} and \eqref{est1}, we have the following estimate at $q$:
\begin{align*}
\left|(\namb\om^j)(\te_k,\te_k)\right| &= \left| \ta^j_i(\te_k)\,\om^i(\te_k) - \ta^\af_j(\te_k)\,\om^\af(\te_k) \right| \leq c_6 \left( \fs(q) + \sqrt{\psi(q)} \right) ~, \\
\left|(\namb\om^\bt)(\te_k,\te_k)\right| &= \left| \ta^\bt_j(\te_k)\,\om^j(\te_k) + \ta^\bt_\gm(\te_k)\,\om^\gm(\te_k) \right| \leq c_6 ~.
\end{align*}
Together with \eqref{conn02},
\begin{align}
\left|\left(\ta_i^\af(e_j)\,\namb\om^j + \ta_i^\af(e_\bt)\,\namb\om^\bt\right)(\te_k,\te_k)\right| &\leq c_7\left( \fs(q) + \sqrt{\psi(q)} \right) ~.
\label{est8} \end{align}
It follows from \eqref{conn02} and \eqref{vf_expansion} that
\begin{align} \begin{split}
\left| \dd(\ta_i^\af(e_j)) - \left(\hmd{\af}{i}{j}{k}\right)(p)\om^k + \left(\RM{\af}{i}{\bt}{j} + \hm{\af}{i}{k}\hm{\bt}{j}{k}\right)(p)\om^\bt \right| &\leq c_8 \sqrt{\psi(q)} ~, \\
\left| \dd(\ta_i^\af(e_\bt)) - \oh \RM{\af}{i}{\gm}{\bt}(p) \om^\gm \right| &\leq c_8
\end{split} \label{est9} \end{align}
where the norm on the left hand side is induced by the Riemannian metric $g$.  By combining \eqref{est7}, \eqref{est8} and \eqref{est9},
\begin{align*}
\left| (\namb\ta^\af_i)(\te_k,\te_k) - \cos^2\phi_k\, \hmd{\af}{i}{k}{k}(p) \right| &\leq c_9 \left( \fs(q) + \sqrt{\psi(q)} \right) ~.
\end{align*}
It together with \eqref{linear8} and \eqref{est6} gives that
\begin{align} \begin{split}
&\left| -\big((\namb\ta^\af_i)(\te_k,\te_k)\,(\om^\af\w\Om^i)(\te_1,\cdots,\te_n)\big) + (*\Om)\,{\sin\phi_i}\,\hmd{(n+i)}{i}{k}{k}(p) \right| \\
\leq\,& c_{10}\left( \left(\fs(q)\right)^2 + \psi(q) \right) ~.
\end{split} \label{est5} \end{align}

It remains to calculate the second term in the asserted inequality of the lemma.  By \eqref{linear7},
\begin{align*}
\sum_{\af,i,k}(-1)^i\Om(\te_\af,\te_1,\cdots,\widehat{\,\te_i\,},\cdots,\te_n)\RTD{\af}{k}{k}{i}
&= (*\Om)\sum_{i,k}\frac{\sin\phi_i}{\cos\phi_i}R(\te_{n+i},\te_k,\te_k,\te_i) ~.
\end{align*}
With \eqref{linear1}, \eqref{linear2} and \eqref{est6},
\begin{align*}
\left| \sum_{\af,i,k}(-1)^i\Om(\te_\af,\te_1,\cdots,\widehat{\,\te_i\,},\cdots,\te_n)\RTD{\af}{k}{k}{i} + (*\Om)\sum_{i,k}{\sin\phi_i}\RM{(n+i)}{k}{k}{i} \right| \leq c_{11} \left(\fs(q)\right)^2 ~.
\end{align*}
Since $\big|\RM{(n+i)}{k}{k}{i}|_q - \RM{(n+i)}{k}{k}{i}|_p\big|\leq c_{12}\sqrt{\psi(q)}$, we have
\begin{align}
\left| \sum_{\af,i,k}(-1)^i\Om(\te_\af,\te_1,\cdots,\widehat{\,\te_i\,},\cdots,\te_n)\RTD{\af}{k}{k}{i} + (*\Om)\sum_{i,k}{\sin\phi_i}\RM{(n+i)}{k}{k}{i}(p) \right| \leq c_{13}\left( \left(\fs(q)\right)^2 + \psi(q) \right) ~.
\label{est11} \end{align}

To conclude the lemma, apply the triangle equality on \eqref{est10}, \eqref{est5} and \eqref{est11}, and note that $\RM{(n+i)}{k}{k}{i}(p) = \hmd{(n+i)}{i}{k}{k}(p)$ by \eqref{Codazzi3}.
\end{proof}

\begin{rmk}
The tensor $S^\Sm$ is needed for Lemma \ref{lem_cov2}; otherwise the error term would be bigger.  However, $S^\Sm$ will only be used in some intermediate steps in the proof of Theorem B.
\end{rmk}

\subsubsection{The derivative of $*\Om$ along $\Gamma$}

The following lemma relates the derivative of $*\Om$ along $\Gamma$ and the second fundamental form of $\Gamma$.

\begin{lem} \label{lem_cov*}
Let $\Sm^n\subset(M,g)$ be a compact, oriented minimal submanifold.  Then, there exist a positive constant $c$ which depends on the geometry of $M$ and $\Sm$ and which has the following property.  Suppose that $\Gm\subset U_{\vep}$ is an oriented $n$-dimensional submanifold with $*\Om(q)>\oh$ for any $q\in\Gm$.  Then,
\begin{align*}
|\nabla^{\Gm}(*\Om)|^2 &\leq c \left(\fs(q) (*\Om) \right)^2 |\II^\Gm - \II^\Sm|^2 + c\left( \left(\fs(q)\right)^2 +\psi(q) \right)^2
\end{align*}
for any $q\in\Gm$.
\end{lem}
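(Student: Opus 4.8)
The plan is to differentiate $*\Om$ along $\Gm$ and extract the leading contribution in terms of the two second fundamental forms and the slope. Fix $q\in\Gm$ with associated point $p\in\Sm$, take the adapted frames \eqref{linear1}, \eqref{linear2} at $q$, and extend $\{\te_1,\dots,\te_n\}$ to an oriented orthonormal frame for $T\Gm$ on a neighborhood of $q$ in $\Gm$ that is geodesic at $q$, so that $\namb_{\te_k}\te_j\big|_q=\II^\Gm(\te_k,\te_j)=\htd{\af}{k}{j}\,\te_\af$. Since $*\Om=\Om(\te_1,\dots,\te_n)$ near $q$,
\[
\te_k(*\Om)\big|_q=(\namb_{\te_k}\Om)(\te_1,\dots,\te_n)+\sum_{j,\af}\htd{\af}{k}{j}\,\Om(\te_1,\dots,\te_{j-1},\te_\af,\te_{j+1},\dots,\te_n)~.
\]
First I would evaluate the two pieces using \S\ref{sec_linear}: by \eqref{cov1} and \eqref{linear8} the first term equals $(*\Om)\sum_j\tan\phi_j\,\ta^{(n+j)}_j(\te_k)$, while, moving $\te_\af$ to the front and applying \eqref{linear7}, $\Om(\te_1,\dots,\te_{j-1},\te_\af,\te_{j+1},\dots,\te_n)=-\dt_{\af(n+j)}\tan\phi_j\,(*\Om)$. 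This produces the key identity
\[
\te_k(*\Om)\big|_q=(*\Om)\sum_{j}\tan\phi_j\left(\ta^{(n+j)}_j(\te_k)-\htd{(n+j)}{k}{j}\right)~,
\]
which already shows that $\nabla^\Gm(*\Om)$ vanishes to the order that $\II^\Gm$ matches the ``horizontal'' connection coefficients of the background frame.

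The next step is to recognize $\ta^{(n+j)}_j(\te_k)$ as a component of the parallel-transported tensor $\II^\Sm$ up to errors of order $\fs^2(q)+\sqrt{\psi(q)}$. Writing $\te_k=\cos\phi_k\,\me_k+\sin\phi_k\,\me_{n+k}$, I would use \eqref{conn02} and the first line of \eqref{conn03} together with the vanishing of the $x$-coordinates of $q$: the $y^\bt$-term of \eqref{conn02} is $\CO(\sqrt{\psi(q)})$, the $\me_{n+k}$-contribution is $\CO(\fs(q)\sqrt{\psi(q)})$, and $\cos\phi_k=1+\CO(\fs^2(q))$ by \eqref{est6}, so that $\ta^{(n+j)}_j(\te_k)\big|_q=\hm{(n+j)}{j}{k}(p)+\CO(\fs^2(q)+\sqrt{\psi(q)})$. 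On the other hand, evaluating \eqref{second_Sm} gives $\ip{\II^\Sm(\te_j,\te_k)}{\te_{n+j}}=\cos^2\phi_j\cos\phi_k\,\hm{(n+j)}{j}{k}(p)=\hm{(n+j)}{j}{k}(p)+\CO(\fs^2(q))$, whereas $\htd{(n+j)}{k}{j}=\ip{\II^\Gm(\te_j,\te_k)}{\te_{n+j}}$. Subtracting, $\ta^{(n+j)}_j(\te_k)-\htd{(n+j)}{k}{j}=\ip{(\II^\Sm-\II^\Gm)(\te_j,\te_k)}{\te_{n+j}}+\CO(\fs^2(q)+\sqrt{\psi(q)})$, and the first summand is a single component of $\II^\Gm-\II^\Sm$, hence bounded by $|\II^\Gm-\II^\Sm|$. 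Combined with $|\tan\phi_j|\le2\fs(q)$ from \eqref{est6} and $*\Om\le1$, this gives the pointwise bound
\[
\left|\te_k(*\Om)\big|_q\right|\le c\,\fs(q)\,(*\Om)\left(|\II^\Gm-\II^\Sm|+\fs^2(q)+\sqrt{\psi(q)}\right)~.
\]

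Finally I would square, sum over $k$, and tidy up with Young's inequality: expanding the square of the right-hand side, the cross term $2\,|\II^\Gm-\II^\Sm|(\fs^2(q)+\sqrt{\psi(q)})$ is absorbed half into $\fs^2(q)(*\Om)^2|\II^\Gm-\II^\Sm|^2$ and half into the lower-order part, and using that $\fs(q)$ and $\psi(q)$ are bounded on $U_\vep$ one checks $\fs^2(q)\big(\fs^2(q)+\sqrt{\psi(q)}\big)^2\le c\big(\fs^2(q)+\psi(q)\big)^2$, all of $\fs^6$, $\fs^4\sqrt\psi$, $\fs^2\psi$ being dominated. This yields exactly the asserted inequality.

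I expect the genuine difficulty to lie entirely in the second step: one must correctly pin down which component of the parallel-transported $\II^\Sm$ is paired against $\htd{(n+j)}{k}{j}$ in the key identity, and then verify through \eqref{conn02} and the frame relations \eqref{linear1}--\eqref{linear2}, \eqref{inverse1} that $\ta^{(n+j)}_j(\te_k)$ agrees with it modulo $\CO(\fs^2+\sqrt\psi)$, so that the whole difference can be replaced by a component of $\II^\Gm-\II^\Sm$. Everything else is small-angle bookkeeping (already packaged in \eqref{est6}) plus Young's inequality. In particular $S^\Sm$ plays no role for this lemma, since its contribution to $\ta^{(n+j)}_j(\te_k)$ is only of order $\sqrt{\psi(q)}$, which is harmless after squaring.
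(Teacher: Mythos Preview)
Your proof is correct and follows essentially the same route as the paper: both compute $\te_k(*\Om)=(*\Om)\sum_j\tan\phi_j\bigl(\ta^{(n+j)}_j(\te_k)-\htd{(n+j)}{k}{j}\bigr)$, identify the difference as a component of $\II^\Sm-\II^\Gm$ (via \eqref{second_Sm}) modulo an error coming from \eqref{conn02}, and then square. The only cosmetic difference is bookkeeping: the paper absorbs the $\tan\phi_j$ prefactor into the error first (so the remainder is already $\CO(\fs^2+\psi)$ before squaring), whereas you carry the factor $\fs\cdot\sqrt{\psi}$ through and apply Young's inequality after squaring; both give the same bound.
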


\begin{proof}
We compute
\begin{align*}
\nabla^{\Gm}(*\Om) &= \left[\te_j(\Om(\te_1,\cdots,\te_n)) \right]\tom^j \\
&= \left[ (\namb_{\te_j}\Om)(\te_1,\cdots,\te_n) + \sum_{i=1}^n\Om(\te_1,\cdots,\te_{i-1},\namb_{\te_j}\te_i,\te_{i+1},\cdots,\te_n) \right]\tom^j \\
&= \left[ (\namb_{\te_j}\Om)(\te_1,\cdots,\te_n) + \sum_{i=1}^n\htd{\af}{i}{j}\Om(\te_1,\cdots,\te_{i-1},\te_\af,\te_{i+1},\cdots,\te_n) \right]\tom^j ~.
\end{align*}
Note that the expression is tensorial, and we use the frame \eqref{linear1} and \eqref{linear2} to proceed.

Due to \eqref{cov1} and \eqref{linear8},
\begin{align*}
(\nabla_{\te_j}\Om)(\te_1,\cdots,\te_n) &= \ta_i^\af(\te_j)\,(\om^\af\w\Om^i)(\te_1,\cdots,\te_n) \\
&= \left( \cos\phi_j\,\ta_i^{n+i}(e_j) + \sin\phi_j\,\ta_i^{n+i}(e_{n+j}) \right)\frac{\sin\phi_i}{\cos\phi_i}(*\Om) ~.
\end{align*}
By \eqref{conn02} and \eqref{est6}, at $q$,
\begin{align*}
& \left| (\nabla_{\te_j}\Om)(\te_1,\cdots,\te_n) - (*\Om)\sum_{i=1}^n\left[\frac{\sin\phi_i}{\cos\phi_i}\cos\phi_{n+i}\cos\phi_i\cos\phi_j \hm{(n+i)}{i}{j}(p)\right] \right| \\
\leq\;& c_1 \left( \left(\fs(q)\right)^2 +\psi(q) \right) ~.
\end{align*}
According to \eqref{linear7},
\begin{align*}
\sum_{i=1}^n\htd{\af}{i}{j}\Om(\te_1,\cdots,\te_{i-1},\te_\af,\te_{i+1},\cdots,\te_n)  &= -(*\Om)\sum_{i=1}^n\left[\frac{\sin\phi_i}{\cos\phi_i}\htd{(n+i)}{i}{j}\right] ~.
\end{align*}

To sum up,
\begin{align*}
\left| \nabla^\Gm(*\Om) \right|^2 &= \sum_{j=1}^n \left[\te_j(\Om(\te_1,\cdots,\te_n)) \right]^2 \\
&\leq 2\sum_{j=1}^n \left| (\nabla_{\te_j}\Om)(\te_1,\cdots,\te_n) - (*\Om)\sum_{i=1}^n\left[\frac{\sin\phi_i}{\cos\phi_i}\cos\phi_{n+i}\cos\phi_i\cos\phi_j \hm{(n+i)}{i}{j}(p)\right] \right|^2 \\
&\quad + 2(*\Om)^2 \sum_{j=1}^n \left| \sum_{i=1}^n\frac{\sin\phi_i}{\cos\phi_i}\left( \cos\phi_{n+i}\cos\phi_i\cos\phi_j \hm{(n+i)}{i}{j}(p) -\htd{(n+i)}{i}{j} \right) \right|^2 \\
&\leq 4n c_1^2 \left( \left(\fs(q)\right)^2 +\psi(q) \right)^2 \\
&\quad + 8n \left(\fs(q)\right)^2 (*\Om)^2 \sum_{i,j}     \left| \htd{(n+i)}{i}{j} - \cos\phi_{n+i}\cos\phi_i\cos\phi_j \hm{(n+i)}{i}{j}(p)\right|^2
\end{align*}
By \eqref{second_Gm} and \eqref{second_Sm},
\begin{align*}
|\II^\Gm - \II^\Sm|^2 \geq \sum_{\af,i,j}\left| \htd{\af}{i}{j} - \cos\phi_i\cos\phi_j\cos\phi_\af\hm{\af}{i}{j}(p) \right|^2
\end{align*}
This completes the proof of this lemma.
\end{proof}

\section{Stability of the mean curvature flow}

After the preparation in the last sections, we consider the mean curvature flow. 
We first recall the following proposition from \cite{ref_W4}*{Proposition 3.1}.

\begin{prop} Along the mean curvature flow $\Gm_t$ in $M$, 
$*\Omega =\Omega(\te_1, \cdots, \te_n)$ satisfies
\begin{align}\label{*Omega}
\begin{split}
\frac{\dd}{\dd t} *\Omega &=\Dt *\Omega + *\Omega (\sum_{\alpha ,i,k}\htd{\alpha}{i}{k}^2)\\
&-2\sum_{\alpha, \beta, k}[\OMTD{\alpha \beta 3\cdots n}
\htd{\alpha}{1}{k}\htd{\beta}{2}{k} + \OMTD{\alpha 2 \beta\cdots n}
\htd{\alpha}{1}{k}\htd{\beta}{3}{k} +\cdots +\OMTD{1\cdots (n-2) \alpha\beta} \htd{\alpha}{(n-1)}{k}
\htd{\beta}{n}{k}]\\
&-2(\namb_{\te_k}\Omega) (\te_\alpha, \cdots, \te_{n})\htd{\alpha}{1}{k}
-\cdots-2(\nabla_{\te_k}\Omega) (\te_{1}, \cdots, \te_\alpha
)\htd{\alpha}{n}{k}\\
&-\sum_{\alpha, k}[\OMTD{\alpha 2 \cdots n}\RTD{\alpha}{k}{k}{1}
+\cdots +\OMTD{1\cdots (n-1)\alpha}\RTD{\alpha}{k}{k}{n}]-(\namb^2_{\te_k,\te_k}\Omega)(\te_{1},
\cdots,\te_{n})
\end{split}
\end{align}
where $\Dt$ denotes the time-dependent Laplacian on $\Gm_t$, $\OMTD{\alpha\beta 3\cdots n}=\Omega(\te_\alpha, \te_\beta, \te_3, \cdots, \te_n)$ etc., and $\RTD{\alpha}{k}{k}{1}=\ip{R(\te_k,\te_1)\te_k}{\te_\alpha}$, etc.\ are the coefficients of the curvature operator. 
\end{prop}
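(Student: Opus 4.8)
This is \cite{ref_W4}*{Proposition 3.1}, and the plan is to verify it by a direct computation that compares the $t$-derivative of $*\Om$ along the flow with its intrinsic Laplacian on $\Gm_t$.  First I would parametrize the flow by $F\colon\Gm\times[0,T)\to M$ with $\pl_t F=H$, fix a point $p$ and a time $t_0$, and choose an orthonormal frame $\{\te_1,\dots,\te_n,\te_{n+1},\dots,\te_{n+m}\}$ along $\Gm_t$ near $p$, with $\{\te_i\}$ tangent to $\Gm_t$ and $\{\te_\af\}$ normal, normalized at $(p,t_0)$ so that $\nt_{\te_i}\te_j=0$ and $\nnor_{\te_i}\te_\af=0$ there (their $\te_k$-derivatives still producing the curvatures of $\nt$, $\nnor$, and, via the Gauss equation, of $M$).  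The time evolution of the frame I would fix by $\namb_{\pl_t}\te_i=(\namb_{\te_i}H)^\perp$, which is always achievable and keeps the frame orthonormal and adapted to $\Gm_t$.  Then $*\Om=\Om(\te_1,\dots,\te_n)$ is a scalar function on $\Gm_t$.

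For the time derivative, the Leibniz rule together with $\pl_t F=H$ gives
\begin{align*}
\frac{\dd}{\dd t}(*\Om)=(\namb_H\Om)(\te_1,\dots,\te_n)+\sum_i\Om\big(\te_1,\dots,(\namb_{\te_i}H)^\perp,\dots,\te_n\big)~.
\end{align*}
In the second sum I would write $H=\htd{\gm}{k}{k}\,\te_\gm$, evaluate $(\namb_{\te_i}H)^\perp$ at $p$, and rewrite the trace $\te_i(\htd{\gm}{k}{k})$ using the Codazzi equation \eqref{Codazzi1} of $\Gm_t$; this splits it into a divergence of $\II^\Gm$ and a contraction of the ambient curvature, the latter contributing the terms $-\sum_{\af,k}[\OMTD{\af2\cdots n}\RTD{\af}{k}{k}{1}+\dots]$ of \eqref{*Omega}.

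For the spatial part, since $*\Om$ is a function and the Christoffel symbols of $\nt$ vanish at $p$, one has $\Dt(*\Om)=\sum_k\te_k\big(\te_k(*\Om)\big)$ there.  Expanding $\te_k\te_k\big(\Om(\te_1,\dots,\te_n)\big)$ by the Leibniz rule, using $\namb_{\te_k}\te_i=\nt_{\te_k}\te_i+\htd{\af}{k}{i}\,\te_\af$ and $\namb_{\te_k}\te_\af=-\htd{\af}{k}{i}\,\te_i+\nnor_{\te_k}\te_\af$, produces $\sum_k(\namb^2_{\te_k,\te_k}\Om)(\te_1,\dots,\te_n)$, a term $(\namb_H\Om)(\te_1,\dots,\te_n)$, a term $2\sum_{i,k}(\namb_{\te_k}\Om)(\te_1,\dots,\htd{\af}{k}{i}\te_\af,\dots,\te_n)$, terms quadratic in $\II^\Gm$, and further curvature and divergence terms.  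Subtracting the two expansions, the $(\namb_H\Om)$ terms cancel, the $\namb^2\Om$ term and the $\namb\Om\!\cdot\!\II^\Gm$ term survive with the signs shown in \eqref{*Omega}, the divergence-of-$\II^\Gm$ terms cancel between the two sides, and the surviving $\II^\Gm$-quadratic pieces, using the alternating property of $\Om$ and $H=\htd{\af}{k}{k}\te_\af$, regroup into $*\Om\,\sum_{\af,i,k}\htd{\af}{i}{k}^2$ plus the off-diagonal sum $-2\sum_k[\OMTD{\af\bt3\cdots n}\htd{\af}{1}{k}\htd{\bt}{2}{k}+\dots]$; this is exactly \eqref{*Omega}.

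The computation is elementary, and the main obstacle is purely organizational: keeping track of the many terms generated by the two Leibniz expansions, getting every sign and every omitted slot right, and recognizing the combinatorial identity that collapses the $\II^\Gm$-quadratic contributions into the stated form.  In practice I would isolate the ambient curvature terms first (the only place where the traced Codazzi equation and the Gauss equation enter), then check that all divergence-of-$\II^\Gm$ terms cancel, and finally match the remaining quadratic terms; for the full details I refer to \cite{ref_W4}.
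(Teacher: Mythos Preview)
The paper does not actually prove this proposition; it simply recalls it as \cite{ref_W4}*{Proposition~3.1} without reproducing the argument. Your proposal is consistent with this: you correctly identify the source, give a reasonable outline of the direct computation (time derivative via the Leibniz rule and $\partial_t F=H$, spatial Laplacian via the double Leibniz expansion at a point where the tangential and normal connections have vanishing Christoffel symbols, then matching terms using Codazzi and the alternating property of $\Omega$), and defer the bookkeeping to the same reference. There is nothing to correct.
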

When $\Omega$ is a parallel form in $M$, $\nabla\Omega \equiv 0 $, this recovers an important formula in proving the long time existence result of the graphical mean curvature flow in \cite{ref_W3}.

\subsection{Proof of Theorem B}
A finite time singularity of the mean curvature flow happens exactly when the second fundamental becomes unbounded; see Huisken \cite{ref_Huisken}, also \cite{ref_W4}.  The following theorem shows that if we start with a submanifold which is $\CC^1$ close to a strongly stable minimal submanifold $\Sigma$, then the mean curvature flow exists for all time, and converges smoothly to $\Sigma$.

\begin{thm} \label{longtime} (Theorem B)
Let $\Sm^n\subset(M,g)$ be a compact, oriented, strongly stable minimal submanifold.  Then, there exist positive constants $\kp<\!\!<1$ and $c$ which depend on the geometry of $M$ and $\Sm$ and which have the following significance.  Suppose that $\Gm\subset U_\vep$ is an oriented $n$-dimensional submanifold satisfying
\begin{align}
\sup_{q\in\Gm} \left( 1 - (*\Om) + \psi \right) < \kp ~.
\label{smallC1} \end{align}
Then, the mean curvature flow $\Gm_t$ with $\Gm_0 = \Gm$ exists for all $t>0$.  Moreover, $\sup_{q\in\Gm_t}|\IIt|\leq c$ for any $t>0$, where $\IIt$ is the second fundamental form of $\Gm_t$, and $\Gm_t$ converges smoothly to $\Sigma$ as $t\rightarrow \infty$. 
\end{thm}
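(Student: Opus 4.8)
The plan is to prove Theorem B by a continuity/bootstrap argument driven by the evolution equation \eqref{*Omega} for $*\Om$ together with the convexity of $\psi$ from Proposition \ref{prop_cvx}. First I would set up the parabolic machinery: since $\Gm_0\subset U_\vep$ satisfies \eqref{smallC1}, short-time existence of the mean curvature flow gives a flow $\Gm_t$ for $t\in[0,T)$ which remains in $U_\vep$ and keeps $*\Om>\oh$ for a short time, so the estimates of \S4 (Lemmas \ref{lem_cov1}, \ref{lem_cov2}, \ref{lem_cov*}) apply. The key scalar quantities to track are $\sup_{\Gm_t}\psi$, $\sup_{\Gm_t}(1-*\Om)$, and, at the end, $\sup_{\Gm_t}|\IIt|$. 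I would phrase the argument as: on the maximal time interval on which these stay below suitable multiples of $\kp$, derive strictly improving differential inequalities, conclude by the maximum principle that they decay, hence the interval is all of $[0,\infty)$.

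The heart of the argument is two coupled differential inequalities. For $\psi$: along the mean curvature flow $\frac{\dd}{\dd t}\psi = \Dt\psi - \tr_{T\Gm_t}\Hess(\psi) + \langle \namb\psi, H\rangle$-type terms; since $\psi$ is built from $\sum_\af (y^\af)^2$ and $H$ is tangential-free, the zeroth-order term is controlled by Proposition \ref{prop_cvx}, giving $\frac{\dd}{\dd t}\psi \le \Dt\psi - c\,\psi$ once $\fs$ and $\psi$ are small (the $\fs^2$ contribution only helps). By the maximum principle $\sup_{\Gm_t}\psi \le e^{-ct}\sup_{\Gm_0}\psi$. For $*\Om$: substitute the estimates of Lemmas \ref{lem_cov1} and \ref{lem_cov2} into \eqref{*Omega}. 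The quadratic-in-$\II^\Gm$ terms combine — the $|\II^\Gm|^2$ term from the evolution equation, the $(*\Om)\langle\II^\Gm,\II^\Sm+S^\Sm\rangle$ cross terms from Lemma \ref{lem_cov1}, and the $(*\Om)|\II^\Sm+S^\Sm|^2$ term from Lemma \ref{lem_cov2} — to form (up to controllable errors) a sum of the shape $(*\Om)\sum |\htd{\af}{i}{k} - (\II^\Sm+S^\Sm)_{\cdot}|^2 \ge 0$ plus the genuinely negative curvature/second-fundamental-form term $-(*\Om)\sum_{\af,\bt,i}(\RM{i}{\af}{i}{\bt}+\sum_j\hm{\af}{i}{j}\hm{\bt}{i}{j})(\te_\af)_\cdot(\te_\bt)_\cdot$, which by the strong stability \eqref{sstable} is bounded below by $c_0 \fs^2 (*\Om)$ (this is where $\sin^2\phi_i$ reconstitutes the normal vector $v^\af$). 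All error terms are $O(\fs^2+\psi)|\II^\Gm|$ or $O((\fs^2+\psi))$, and using $1-*\Om \asymp \fs^2$ from \eqref{boundfs}--\eqref{boundfs1} together with Young's inequality to absorb the $|\II^\Gm|$-linear errors into the nonnegative square, one arrives at $\frac{\dd}{\dd t}(1-*\Om) \le \Dt(1-*\Om) - c\,(1-*\Om) + C\psi$. Feeding in the exponential decay of $\psi$ and applying the maximum principle yields $\sup_{\Gm_t}(1-*\Om)\le C\kp\, e^{-ct}$, in particular $*\Om > \oh$ persists; this closes the bootstrap and gives long-time existence, since by \eqref{boundfs1} boundedness away from $*\Om=0$ precludes the blow-up of $|\II|$ only after the next step.

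To upgrade to $\CC^\infty$ convergence I would then run the standard higher-order estimates: once $1-*\Om$ and $\psi$ decay exponentially, $\Gm_t$ is a graph over $\Sm$ with $\CC^1$ norm going to zero; by interior parabolic Schauder estimates (as in \cite{ref_W3,ref_W4}) together with the decay of $*\Om$, one obtains a uniform bound $\sup_{\Gm_t}|\IIt|\le c$, and then uniform bounds on all covariant derivatives $|\bar\nabla^k \IIt|$, hence smooth subconvergence; combined with the exponential $\CC^0$ and $\CC^1$ decay this forces $\Gm_t\to\Sm$ smoothly. The main obstacle is the algebraic bookkeeping in the second paragraph: verifying that the quadratic terms in $\II^\Gm$ coming from the three sources in \eqref{*Omega}, \ref{lem_cov1}, \ref{lem_cov2} assemble into a manifestly nonnegative complete square plus exactly the strong-stability term with the right sign, and that every leftover is genuinely of order $O(\fs^2+\psi)$ (zeroth/first order) so it can be absorbed. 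The auxiliary tensor $S^\Sm$ is introduced precisely to make this cancellation exact, as noted in the remark after Lemma \ref{lem_cov2}; once the square is identified the rest is the by-now-routine maximum principle plus regularity theory.
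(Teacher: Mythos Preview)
Your overall architecture---evolving $\psi$ and $*\Om$, completing the square in the $*\Om$ equation to produce $(*\Om)|\IIt-\II^\Sm-S^\Sm|^2$, and then bootstrapping---matches the paper. But there is a genuine gap at the step you flag as routine: the uniform bound on $|\IIt|$, which is precisely what is needed to rule out finite-time blow-up. You write that ``interior parabolic Schauder estimates (as in \cite{ref_W3,ref_W4})'' yield $\sup|\IIt|\le c$ once the $\CC^1$ quantities are under control, but those references do \emph{not} proceed by Schauder theory; in higher codimension the graphical mean curvature flow is a quasilinear parabolic \emph{system}, for which De Giorgi--Nash--Moser/Schauder estimates are not available without additional structure. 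The mechanism the paper (and \cite{ref_W3}) actually uses is different and is the key idea you are missing: one considers $(*\Om)^p$ for a large constant $p$, whose evolution inequality carries the good term $\tfrac{p}{4}(*\Om)^p|\IIt-\II^\Sm|^2$, and then studies $\eta^{-1}|\IIt|^2$ with $\eta=(*\Om)^p-Kp^2\psi$. In the resulting inequality the quartic term coming from \eqref{|A|^2} is $c_8\,\eta^{-1}|\IIt|^4$, while the favourable term contributes $-\tfrac{p}{8}\eta^{-1}|\IIt|^4$; choosing $p>8c_8$ makes the net quartic coefficient negative and the maximum principle gives the uniform bound. Without this step (or a carefully justified substitute for systems), the decay of $1-*\Om$ and $\psi$ alone does not prevent $|\IIt|$ from blowing up, so long-time existence is not yet established.

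A smaller point: the ``genuinely negative'' strong-stability term you claim to extract from the $*\Om$ evolution is not actually there. In the paper's accounting, the curvature terms $\RTD{\af}{k}{k}{i}$ in \eqref{*Omega} combine with $\namb^2\Om$ via Lemma~\ref{lem_cov2} to produce exactly the $|\II^\Sm+S^\Sm|^2$ piece of the complete square, and nothing more (this is where the Codazzi identity \eqref{Codazzi3} for minimal $\Sm$ is used). Strong stability enters only through Proposition~\ref{prop_cvx} in the $\psi$ equation; the decay of $1-*\Om$ is then obtained by combining $\psi$ and $*\Om$ into the single quantity $1-*\Om+c_6\psi$ (equation \eqref{evol6}), not from the $*\Om$ equation alone. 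Your final inequality $\frac{\dd}{\dd t}(1-*\Om)\le\Dt(1-*\Om)-c(1-*\Om)+C\psi$ is therefore not quite right as stated, though the correct combined version yields the same conclusion.
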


\begin{proof}
The constant $\kp$ will be chosen to be smaller than $\vep^2$ and $\oh$; its precise value will be determined later.  Suppose that the condition \eqref{smallC1} holds for all $\{\Gm_t\}_{0\leq t<T}$.

Denote by $H_t$ the mean curvature vector of $\Gm_t$.  According to Proposition \ref{prop_cvx}
\begin{align}
\frac{\dd}{\dd t}\psi &= H_t(\psi) = \Dt\psi - \tr_{\Gm_t}\Hess\psi \leq \Delta^{\Gm_t}\psi - c_1 (\psi + \fs^2) ~.
\label{evol1} \end{align}
By applying Lemma \ref{lem_cov1}, Lemma \ref{lem_cov2} and the second line of \eqref{linear6} to \eqref{*Omega},
\begin{align*}
\frac{\dd}{\dd t}(*\Om) &\geq \Dt(*\Om) + (*\Om)|\IIt|^2 - 2c_2 \fs^2 |\IIt|^2 \\
&\quad - 2(*\Om)\ip{\IIt}{\II^\Sm + S^\Sm} - c_2 (\fs^2 + \psi) |\IIt| \\
&\quad + (*\Om) |\II^\Sm + S^\Sm|^2 - c_2(\fs^2 + \psi) \\
&\geq \Dt(*\Om) + (*\Om)|\IIt - \II^\Sm - S^\Sm|^2 - c_3(\fs^2 + \psi) |\IIt|^2 - c_3(\fs^2 + \psi) \\
&\geq \Dt(*\Om) + \oh (*\Om)|\IIt - \II^\Sm|^2 - (*\Om)|S^\Sm|^2 \\
&\quad -2c_3(\fs^2 + \psi) |\IIt - \II^\Sm|^2 - c_3(\fs^2 + \psi) (1 + 2|\II^\Sm|^2) ~.
\end{align*}
The last inequality uses the fact that $|\IIt - \II^\Sm|^2 \leq 2|\IIt - \II^\Sm - S^\Sm|^2 + 2|S^\Sm|^2$ and $|\IIt|^2 \leq 2|\IIt - \II^\Sm|^2 + 2|\II^\Sm|^2$.
If $\kp\leq 1/(48c_3)$, it follows from \eqref{boundfs} that $2c_3(\fs^2 + \psi)\leq (*\Om)/6$.  Since $|S^\Sm|^2\leq c_4\psi$ and $|\II^\Sm|^2\leq c_4$,
\begin{align}
\frac{\dd}{\dd t}(*\Om) &\geq \Dt(*\Om) + \frac{1}{3} (*\Om)|\IIt - \II^\Sm|^2 - c_5(\fs^2 + \psi) ~.
\label{evol2} \end{align}
By combining it with \eqref{evol1}, \eqref{boundfs} and \eqref{boundfs1}, we have
\begin{align} \begin{split}
\frac{\dd}{\dd t}\left(1-(*\Om) + c_6\psi\right) &\leq \Dt \left(1-(*\Om) + c_6 \psi\right) - \frac{1}{3}(*\Om)|\IIt - \II^\Sm|^2 - c_7\left( 1-(*\Om) + \psi \right) \\
&\leq \Dt \left(1-(*\Om) + c_6 \psi\right)   - \frac{1}{3}(*\Om)|\IIt - \II^\Sm|^2 - \frac{c_7}{c_6}\left( 1-(*\Om) + c_6\psi \right)
\end{split} \label{evol6} \end{align}
where $c_6 = 1+{c_5}/{c_1}$.  By the maximum principle, $\max_{\Gm_t}\left(1 - (*\Om) + c_6\psi\right)$ is non-increasing.

The evolution equation for the norm of the second fundamental form for a mean curvature flow is derived in \cite{ref_W1}*{Proposition 7.1}.  In particular, $|\IIt|^2 = \sum_{\af, i, k} \htd{\af}{i}{k}^2$ satisfies the following
equation along the flow:
\begin{align} \label{|A|^2} \begin{split}
\frac{\dd}{\dd t}|\IIt|^2 &= \Dt |\IIt|^2 -2|\nabla^{\Gm_t} \IIt|^2
+2\left[ (\namb_{\te_k} R)_{\td{\af}\td{i}\td{j}\td{k}} + (\namb_{\te_j} R)_{\td{\af}\td{k}\td{i}\td{k}} \right] \htd{\af}{i}{j}  \\
& - 4 \RTD{l}{i}{j}{k}\htd{\af}{l}{k}\htd{\af}{i}{j} + 8 \RTD{\af}{\bt}{j}{k}\htd{\bt}{i}{k}\htd{\af}{i}{j}
- 4 \RTD{l}{k}{i}{k}\htd{\af}{l}{j}\htd{\af}{i}{j} + 2 \RTD{\af}{k}{\bt}{k}\htd{\bt}{i}{j}\htd{\af}{i}{j}  \\
& + 2\sum_{\af,\gm, i,j} \left( \sum_k \htd{\af}{i}{k}\htd{\gm}{j}{k} - \htd{\af}{j}{k}\htd{\gm}{i}{k}\right)^2
+ 2 \sum_{i,j,k,l} \left( \sum_{\af} \htd{\af}{i}{j}\htd{\af}{k}{l} \right)^2 ~.
\end{split} \end{align}
It follows that
\begin{align}
\frac{\dd}{\dd t}|\IIt|^2 &\leq \Dt |\IIt|^2 -2|\nabla^{\Gm_t} \IIt|^2 + c_8 \left( |\IIt|^4 + |\IIt|^2 + 1 \right) ~.
\label{evol3} \end{align}

The quartic term $|\IIt|^4$ could potentially lead to the finite time blow-up of $|\IIt|$.  We apply the same method in \cite{ref_W3}: use the evolution equation of $(*\Om)^p$ to help.  Let $p$ be a constant no less than $1$, whose precise value will be determined later.  According to \eqref{evol2},
\begin{align*}
\frac{\dd}{\dd t} (*\Om)^p &= p(*\Om)^{p-1}\frac{\dd}{\dd t}(*\Om) \\
&\geq p(*\Om)^{p-1}\Dt(*\Om) + \frac{p}{3} (*\Om)^p|\IIt - \II^\Sm|^2 - c_5 p (\fs^2 + \psi) \\
&= \Dt(*\Om)^p - p(p-1)(*\Om)^{p-2}|\nabla^{\Gm_t}(*\Om)|^2 + \frac{p}{3} (*\Om)^p|\IIt - \II^\Sm|^2 - c_5 p (\fs^2 + \psi) ~.
\end{align*}
After an appeal to Lemma \ref{lem_cov*},
\begin{align*}
\frac{\dd}{\dd t}(*\Om)^p &\geq \Dt(*\Om)^p + \frac{p}{3}\left(1 - c_9\,p\,\fs^2\right) (*\Om)^p|\IIt - \II^\Sm|^2 - c_9\,p^2 (\fs^2 + \psi) ~.
\end{align*}
If $\kp\leq 1/(24c_9 p)$, it follows from \eqref{boundfs} that $c_9\,p\,\fs^2\leq1/12$.  It together with \eqref{evol1} gives that
\begin{align}
\frac{\dd}{\dd t}\left( (*\Om)^p - K p^2\psi \right) &\geq \Dt\left( (*\Om)^p - K p^2\psi \right) + \frac{p}{4} \left( (*\Om)^p - K p^2\psi \right) |\IIt - \II^\Sm|^2
\label{evol4} \end{align}
where $K = {c_9}/{c_1}$.  The maximum principle implies that if $(*\Om)^p - K p^2\psi > 0$ on $\Gm$, then $\min_{\Gm_t}\left( (*\Om)^p - K p^2\psi \right)$ is non-decreasing.  Moreover, for any $p\geq1$, we may choose $\kp$ such that \eqref{smallC1} implies that $(*\Om)^p - K p^2\psi > 1/2$ on $\Gm$.

Denote $(*\Om)^p - K p^2\psi$ by $\eta$.  Due to \eqref{evol3} and \eqref{evol4},
\begin{align*}
\frac{\dd}{\dd t} (\eta^{-1}|\IIt|^2) &\leq \eta^{-1}\Dt |\IIt|^2 - 2\eta^{-1}|\nabla^{\Gm_t} \IIt|^2 + c_8\eta^{-1} \left( |\IIt|^4 + |\IIt|^2 + 1 \right) \\
&\quad - \eta^{-2}|\IIt|^2 \left( \Dt\eta + \frac{p}{4}\eta |\IIt - \II^\Sm|^2 \right) ~.
\end{align*}
Since
\begin{align*}
\Dt (\eta^{-1}|\IIt|^2) &= \eta^{-1}\Dt|\IIt|^2 + |\IIt|^2\Dt\eta^{-1} + 2\ip{\nabla^{\Gm_t}\eta^{-1}}{\nabla^{\Gm_t}|\IIt|^2} \\
&= \eta^{-1}\Dt|\IIt|^2 - \eta^{-2}|\IIt|^2\Dt\eta + 2\eta^{-3}|\nabla^{\Gm_t}\eta|^2|\IIt|^2- 2\eta^{-2}\ip{\nabla^{\Gm_t}\eta}{\nabla^{\Gm_t}|\IIt|^2}\\
&= \eta^{-1}\Dt|\IIt|^2 - \eta^{-2}|\IIt|^2\Dt\eta - 2\eta^{-1}\ip{\nabla^{\Gm_t}\eta}{\nabla^{\Gm_t}(\eta^{-1}|\IIt|^2)}
\end{align*}
and $|\IIt - \II^\Sm|^2 \geq \oh |\IIt|^2 - c_{10}$, we have
\begin{align}
\frac{\dd}{\dd t} (\eta^{-1}|\IIt|^2) &\leq \Dt (\eta^{-1}|\IIt|^2) + 2\eta^{-1}\ip{\nabla^{\Gm_t}\eta}{\nabla^{\Gm_t}(\eta^{-1}|\IIt|^2)}  \notag \\
&\quad - \frac{p}{8}\eta^{-1}|\IIt|^4 + c_{10}\frac{p}{4}\eta^{-1}|\IIt|^2 + c_{8}\eta^{-1} \left( |\IIt|^4 + |\IIt|^2 + 1 \right)  \notag \\
\begin{split}  &\leq \Dt (\eta^{-1}|\IIt|^2) + 2\eta^{-1}\ip{\nabla^{\Gm_t}\eta}{\nabla^{\Gm_t}(\eta^{-1}|\IIt|^2)} \\
&\quad - \oh\left( \frac{p}{8} - c_8 \right) (\eta^{-1}|\IIt|^2)^2 + \left( c_{8}+c_{10}\frac{p}{4}\right) (\eta^{-1}|\IIt|^2) + 2c_{8}
\end{split} \label{evol5} \end{align}
(provided that $p\geq 8c_8$).  Choose $p>8c_{8}$.  It follows from the maximum principle that $\eta^{-1}|\IIt|^2$ is uniformly bounded, and hence there is no finite time singularity.

\

The $\CC^0$ convergence is easy to come by.  The differential inequality \eqref{evol1} implies that $\psi$ converges to zero exponentially.  Similarly, it follows from \eqref{evol6} that $1-(*\Om) + c_6\psi$ converges to zero exponentially.  Therefore, $*\Om$ converges to $1$ exponentially, and we conclude the $\CC^1$ convergence.

\

For the $\CC^2$ and smooth convergence, consider $\eta = (*\Om)^p - Kp^2\psi$.  It follows from the above discussion that $\eta$ has a positive lower bound.  It is clear that $\eta\leq 1$.  Moreover, $\eta$ converges to $1$ as $t\to\infty$.  Integrating \eqref{evol4} gives
\begin{align*}
\int_{\Gm_t} |\IIt - \II^\Sm|^2 \,\dd\mu_t &\leq c_{12} \int_{\Gm_t} \frac{\dd\eta}{\dd t}\,\dd\mu_t ~.
\end{align*}

Recall that the Lie derivative of $\dd\mu_t$ in $H_t$ is $-|H_t^2|\,\dd\mu_t$; see \cite[\S2]{ref_W1}.  It follows that
\begin{align}
\frac{1}{c_{12}} \int_{\Gm_t} |\IIt - \II^\Sm|^2 \,\dd\mu_t &\leq \frac{\dd}{\dd t} \int_{\Gm_t} \eta\,\dd\mu_t + \int_{\Gm_t} \eta\,|H_t|^2\,\dd\mu_t
\label{evol9} \end{align}

We claim that the improper integral of the right hand side for $0\leq t<\infty$ converges.  To start, note that $\frac{\dd}{\dd t}\int_{\Gm_t}\dd\mu_t = -\int_{\Gm_t}|H_t|^2\,\dd\mu_t \leq 0$.  Thus, $\vol(\Gm_t) = \int_{\Gm_t}\dd\mu_t$ is positive and non-increasing, and must converge as $t\to\infty$.  For the first term on right hand side of \eqref{evol9},
\begin{align*}
\int_0^{t} \left(\frac{\dd}{\dd s} \int_{\Gm_s} \eta\,\dd\mu_s\right)\,\dd s &= \int_{\Gm_{t}} \eta\,\dd\mu_{t} - \int_{\Gm_0} \eta\,\dd\mu_0
\end{align*}
Since $\eta$ converges to $1$ (uniformly) and $\vol(\Gm_t)$ converges as $t\to\infty$, $\int_{\Gm_{t}} \eta\,\dd\mu_{t}$ converges as $t\to\infty$.  For the second term on the right hand side of \eqref{evol9},
\begin{align*}
\int_0^{t} \left( \int_{\Gm_s} \eta\,|H_s|^2\,\dd\mu_s \right)\dd s &\leq \int_0^{t} \left( \int_{\Gm_s} |H_s|^2\,\dd\mu_s \right)\dd s = \int_0^{t} \left( -\frac{\dd}{\dd s}\int_{\Gm_s} \dd\mu_s \right)\dd s \\
&= \vol(\Gm_0) - \vol(\Gm_t) \leq \vol(\Gm_0) ~.
\end{align*}
It is bounded from above, and is clearly non-decreasing in $t$.  Therefore, it converges as $t\to\infty$.

It follows from the claim and \eqref{evol9} that
\begin{align}
\int_0^\infty \left( \int_{\Gm_t} |\IIt - \II^\Sm|^2 \,\dd\mu_t \right)\dd t  &<\infty ~.
\label{evol7} \end{align}
On the other hand, $|\IIt - \II^\Sm|^2$ obeys a differential inequality of the same form as \eqref{evol3}:
\begin{align}
\frac{\dd}{\dd t}|\IIt - \II^\Sm|^2 &\leq \Dt |\IIt - \II^\Sm|^2 + c_{13} \left( |\IIt - \II^\Sm|^4 + |\IIt - \II^\Sm|^2 + 1 \right) ~.
\label{evol8} \end{align}
The derivation for this inequality is in Appendix \ref{apx_evol}.
By \eqref{evol8} and the uniform boundedness of $|\IIt|$,
$\frac{\dd}{\dd t}\int_{\Gm_t} |\IIt - \II^\Sm|^2 \,\dd\mu_t $ is bounded from above uniformly.  Due to Lemma \ref{cal}, which is proved at the end of this subsection, we find that
\begin{align}
\lim_{t\to 0} \int_{\Gm_t} |\IIt - \II^\Sm|^2 \,\dd\mu_t &=0 ~.
\label{int} \end{align}


{Since we have shown long time existence and convergence in $\CC^1$, for $t$ large enough $\Gamma_t$ can be written as a graph (in the geodesic coordinate defined in \S\ref{sec_coordinate}) over $\Sigma$ defined by $y^\alpha=f^\alpha_t, \alpha=n+1,\cdots n+m$ for $\CC^1$ functions $f^\alpha_t$ on $\Sigma$.
With  \eqref{int}, a Moser iteration argument similar to \cite[\S5]{ref_Ilmanen} shows that $f^\af_t$ converges to $0$ in $\CC^2$.  The detail of this argument is included in Appendix \ref{apx_Moser}.
With the $\CC^2$ convergence, standard arguments for a second order quasilinear parabolic system lead to the smooth convergence of $f^\alpha_t$.}
\end{proof}

\begin{lem} \label{cal}
Let $a>0$ and $f(t)$ be a smooth function for $t\in(a,\infty)$.  Suppose that $f(t)\geq0$, $\int_a^\infty f(t)\,\dd t$ converges, and $f'(t)\leq C$ for some constant $C>0$.  Then, $f(t)\to 0$ as $t\to\infty$.
\end{lem}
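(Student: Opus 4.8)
The plan is to argue by contradiction, turning a putative failure of $f(t)\to 0$ into a divergent lower bound for $\int_a^\infty f$. Suppose $f$ does not tend to $0$ as $t\to\infty$. Then there exist $\vep>0$ and a sequence $t_k\to\infty$ with $f(t_k)\ge\vep$ for all $k$; after passing to a subsequence I may assume $t_{k+1}>t_k+1$ for every $k$ and $t_1>a+1$.

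Next I would use the one-sided bound $f'\le C$ to propagate the pointwise lower bound backward over a short interval. Set $\dt=\min\{\vep/(2C),\tfrac12\}$. For $t\in[t_k-\dt,\,t_k]$ one has, by integrating $f'\le C$,
\begin{align*}
f(t_k)-f(t)=\int_t^{t_k} f'(s)\,\dd s\le C(t_k-t)\le C\dt\le \frac{\vep}{2}~,
\end{align*}
so that $f(t)\ge f(t_k)-\tfrac{\vep}{2}\ge\tfrac{\vep}{2}>0$ on each interval $[t_k-\dt,t_k]$. Since consecutive $t_k$ differ by more than $1\ge 2\dt$, these intervals are pairwise disjoint, and (discarding finitely many $k$ if necessary) all lie in $(a,\infty)$. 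Hence
\begin{align*}
\int_a^\infty f(t)\,\dd t\;\ge\;\sum_{k}\int_{t_k-\dt}^{t_k} f(t)\,\dd t\;\ge\;\sum_{k}\frac{\vep}{2}\,\dt\;=\;\infty~,
\end{align*}
contradicting the hypothesis that $\int_a^\infty f$ converges. Therefore $f(t)\to 0$ as $t\to\infty$.

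There is no substantial obstacle here; the only points demanding a little care are the choice of $\dt$ as the minimum of $\vep/(2C)$ and $\tfrac12$, which guarantees simultaneously that the lower bound $\tfrac{\vep}{2}$ holds on each interval and that the intervals are disjoint even when $C$ is large relative to $\vep$, and the harmless discarding of finitely many terms so that $t_k-\dt>a$.
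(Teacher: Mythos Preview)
Your proof is correct and rests on the same key observation as the paper's: the one-sided bound $f'\le C$ lets you propagate a pointwise value of $f$ backward over a short interval via $f(t)\ge f(t_1)-C(t_1-t)$, and then the integrability of $f$ forces the value to be small. The only difference is packaging: you argue by contradiction, extracting infinitely many disjoint intervals each contributing at least $\tfrac{\vep}{2}\dt$ to the integral, whereas the paper gives a direct quantitative estimate, integrating $f(t)\ge f(t_1)-C(t_1-t)$ over a single window $[t_1-\sqrt{\ep},t_1]$ and using $\int_{A_\ep}^\infty f<\ep$ to conclude $f(t_1)<(1+\tfrac{C}{2})\sqrt{\ep}$ for all large $t_1$. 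Both are standard; the paper's version has the minor advantage of yielding an explicit rate in terms of the tail of the integral.
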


\begin{proof}
It follows from $f'(t)\leq C$ that $f(t)\geq f(t_1) - C(t_1 - t)$ for any $t_1>t>a$.  Since $f(t)\geq0$ and $\int_a^\infty f(t)\,\dd t < \infty$, given any $\ep\in(0,1)$, there exists an $A_\ep>a$ such that $\int_{A_\ep}^\infty f(t)\,\dd t<\ep$.  Thus, for any $t_1 > A_\ep + 1 > A_\ep + \sqrt{\ep}$,
\begin{align*}
\ep > \int_{t_1 - \sqrt{\ep}}^{t_1} f(t)\,\dd t &\geq \int_{t_1 - \sqrt{\ep}}^{t_1} \left( f(t_1) - C(t_1 - t) \right)\dd t \\
& = \sqrt{\ep}\left( f(t_1) - C\,t_1\right) + C\left( \sqrt{\ep}\,t_1 - \oh\ep \right) ~.
\end{align*}
It follows that $f(t)<(1+\oh C)\sqrt{\ep}$ for any $t > A_\ep+1$.
\end{proof}

\subsection{With only the stability condition}\label{only_stable}

Theorem \ref{longtime} asserts that a strongly stable minimal submanifold is $\CC^1$ dynanical stable under the mean curvature flow.  Recall that a minimal submanifold is said to be stable if the Jacobi operator, $(\nnor)^*\nnor + \CR - \CA$, is a positive operator.  It is a natural question whether the stability condition already implies the dynamical stability.  This was investigated by Naito in \cite{ref_Naito}.  The answer is yes, but initial submanifold has to be close to the stable one in a \emph{higher} norm.

The approach of Naito is to consider only graphical/sectional type submanifold, i.e.\ $\Gm_t$ is given by a section $\bs$ of the normal bundle of $\Sm$.  Then, the mean curvature flow equation takes the following form
\begin{align}
\frac{\pl\bs}{\pl t} &= -\left( (\nnor)^*\nnor + \CR - \CA \right)(\bs) + \CN(\bs)
\label{MCF_graph} \end{align}
where $\CN$ is a ``small" operator in the sense of \cite[(C3) on p.222]{ref_Naito}.  The positivity of the Jacobi operator takes over the behavior of the parabolic system if one works with a suitable norm.  In the mean curvature flow case, \cite[Theorem 5.3]{ref_Naito} reads as follows.

\begin{thm}\label{stable}
Let $\Sm^n\subset(M,g)$ be a compact, oriented, stable minimal submanifold.  Then, for any $r>\frac{n}{2} + 2$, there exists a positive constant $\kp'<\!\!<1$ such that for any section $\bs_0$ of $N\Sm$ with $\ltn{\bs_0}{r} \leq\kp'$, the mean curvature flow \eqref{MCF_graph} exists for any $t>0$.  Moreover, the solution converges to $0$ exponentially as $t\to\infty$ in $H^{r}$ norm.
\end{thm}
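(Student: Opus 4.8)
The plan is to follow the abstract framework of \cite{ref_Naito}, verifying that its hypotheses hold in the present geometric situation. The first step is to reduce the mean curvature flow near $\Sm$ to the quasilinear parabolic system \eqref{MCF_graph} for a section $\bs$ of $\N\Sm$: since $\Sm$ is compact, the exponential map along $\Sm$ identifies a tubular neighborhood with a disc bundle in $\N\Sm$, so any submanifold $\CC^1$-close to $\Sm$ is a graph $\bs$, and rewriting the flow in this gauge gives $\partial_t\bs = -L\bs + \CN(\bs)$ with $L := (\nnor)^*\nnor + \CR - \CA$ the Jacobi operator and $\CN(0)=0$, $D\CN(0)=0$. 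One should record the structural fact that $\CN(\bs)$ depends smoothly on $\bs$, $\nnor\bs$ and $(\nnor)^2\bs$, and that the coefficient of the top-order term $(\nnor)^2\bs$ in $\CN(\bs)$ is of size $O(|\bs|+|\nnor\bs|)$.

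Next I would assemble the linear theory. On the compact manifold $\Sm$, $L$ is a self-adjoint elliptic operator on $\Gamma(\N\Sm)$, positive by the stability hypothesis, hence with discrete spectrum $0<\lambda_1\le\lambda_2\le\cdots\to\infty$; it generates an analytic semigroup $e^{-tL}$ on every $H^s(\N\Sm)$, and spectral theory together with elliptic regularity furnish the decay--smoothing estimates $\ltn{e^{-tL}\bv}{r}\le C\,e^{-\lambda_1 t}\,\ltn{\bv}{r}$ and $\ltn{e^{-tL}\bv}{r}\le C\,t^{-(r-s)/2}e^{-\lambda_1 t/2}\,\ltn{\bv}{s}$ for $s\le r$ and $t>0$. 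These are the only inputs needed about the linearization.

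Then I would run a Duhamel/fixed-point argument. Choosing $r>\frac{n}{2}+2$ so that $H^r\hookrightarrow\CC^{2,\af}$, the map $\bs\mapsto\CN(\bs)$ sends a small $H^r$-ball into $H^{r-2}$ with a quadratic Lipschitz bound. On the space $X=\{\bs\in\CC^0([0,\infty);H^r(\N\Sm)): \sup_{t\ge0}e^{\mu t}\ltn{\bs(t)}{r}<\infty\}$ for a fixed $\mu\in(0,\lambda_1)$, the operator $\Phi(\bs)(t)=e^{-tL}\bs_0+\int_0^t e^{-(t-\tau)L}\CN(\bs(\tau))\,\dd\tau$ should be a contraction on a small ball once $\ltn{\bs_0}{r}\le\kp'$, because the smoothing estimate with $s=r-2$ makes the relevant time integral $\int_0^t(t-\tau)^{-1}e^{-\lambda_1(t-\tau)/2}\,\dd\tau$ converge and produces a factor that is small when $\bs$ is small. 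The fixed point is the unique mild solution; parabolic smoothing bootstraps it to a smooth solution for $t>0$, and membership in $X$ is precisely the asserted exponential convergence in $H^r$, with rate arbitrarily close to $\lambda_1$ via the choice of $\mu$.

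The main obstacle --- and the substance of condition (C3) in \cite{ref_Naito} --- is that $\CN$ is genuinely \emph{quasilinear} rather than a lower-order perturbation of $L$: its principal part $(\nnor)^2\bs$ carries $\bs$-dependent coefficients, so one cannot simply treat it via the smoothing estimate alone. I would handle this using that those coefficients are $O(|\bs|+|\nnor\bs|)$, hence uniformly small in $\CC^0$ as long as $\ltn{\bs(t)}{r}$ is small (via $H^r\hookrightarrow\CC^{2,\af}$), so that freezing coefficients along the solution yields an operator that stays a small perturbation of $L$ for all $t$; equivalently, one invokes maximal regularity for $L$ and closes a continuity argument. The same quasilinear parabolic theory also supplies short-time existence for merely $H^r$ initial data. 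Beyond these points, everything is the standard spectral-gap argument.
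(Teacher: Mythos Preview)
Your opening sentence matches what the paper actually does: the theorem is not proved from scratch in the paper but is obtained by quoting \cite[Theorem~5.3]{ref_Naito} and then verifying its hypothesis (C3) for the mean curvature flow. Concretely, the paper's work consists of (i) the expansion in \S\ref{sec_nonpara} showing that \eqref{MCF_graph1} has the form \eqref{MCF_graph} with remainder $\CN$, and (ii) the Sobolev multiplication argument in the paragraph following \eqref{C3} showing that $\CN$ obeys
\[
\ltn{\CN(\by)-\CN(\td\by)}{r-1}\le C\bigl(\ltn{\by}{r}\,\ltn{\by-\td\by}{r+1}+\ltn{\by-\td\by}{r}\,\ltn{\td\by}{r+1}\bigr).
\]
Once this is checked, Naito's abstract result applies directly.

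Your proposal instead sketches a direct Duhamel/contraction proof, and there is a concrete error in that sketch. You assert that with $s=r-2$ the time integral $\int_0^t (t-\tau)^{-1}e^{-\lambda_1(t-\tau)/2}\,\dd\tau$ converges; it does not, since the singularity at $\tau=t$ is logarithmically divergent. This is exactly the obstruction that distinguishes genuinely quasilinear problems from semilinear ones, and it cannot be removed by the smallness of $\bs$. Naito's condition (C3) is designed around this: the output of $\CN$ is placed in $H^{r-1}$ (not $H^{r-2}$), so the smoothing factor becomes the integrable $(t-\tau)^{-1/2}$; the price is that the right-hand side of (C3) involves the $H^{r+1}$ norm of the solution, and controlling this along the flow is where Naito's argument requires additional structure beyond a single contraction on an $H^r$-ball. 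Your later remark about maximal regularity is a legitimate alternative route, but it is a different argument from the one you wrote out, and invoking it without detail does not repair the Duhamel step. If you wish to give a self-contained proof rather than quote Naito, you should either carry out the maximal-regularity argument in full or reproduce Naito's two-norm bootstrap using (C3) as stated.
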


According to the Sobolev embedding theorem, $H^{r}\hookrightarrow \CC^{\lfloor r-\frac{n}{2} \rfloor,\af}$ where $\af = r- \frac{n}{2} - \lfloor r-\frac{n}{2} \rfloor\in(0,1]$.  In other words, the assumption implies that $s_0$ has small $\CC^{2,\af}$ norm.

\begin{rmk}
The initial condition in \cite[Proposition 5.2 and Theorem 5.3]{ref_Naito} looks more complicated than above stated.  See \cite[p.223--224]{ref_Naito} for the norm.  The reason is that Naito also discussed the case when the Jacobi operator has non-positive spectrum.  If the Jacobi operator is positive, one can check directly by using the spectral decomposition that the norm condition there is equivalent to the $H^r$ norm ($r$ here corresponds to $m$ in \cite{ref_Naito}).
\end{rmk}

In the rest of this subsection, we will briefly explain how to set up \eqref{MCF_graph} and check the condition \cite[(C3)]{ref_Naito}.  Although this was demonstrated in \cite[p.233--235]{ref_Naito} under a general setting, it is instructive to do it for the mean curvature flow case.

\subsubsection{The expansion in radial direction}\label{sec_nonpara}

To highlighting the computation for a section of $N\Sm$, we build the expansion of the metric coefficients in the radial directions.  Let $\{x^i\}$ be an oriented, local coordinate system of $\Sm$.  Choose a local orthonormal frame $\{e_\mu\}$ for $N\Sm$.  As in section \ref{sec_coordinate}, construct a local coordinate of $M$ by
\begin{align*}
\left( (x^1,\cdots,x^n), (y^{n+1},\cdots,y^{n+m}) \right) &\mapsto \exp_{(x^1,\cdots,x^n)}(y^\bt e_\bt) ~.
\end{align*}
Here, $\{x^i\}$ needs not to be the Gaussian coordinate for $\Sm$.  Denote by $g_{AB}(\bx,\by)$ the coefficients of the Riemannian metric $g$ in this coordinate system.

Denote the normal bundle connection by $A_\mu^\nu$, i.e.\ $\nabla^\perp e_\mu = (A_{\mu\,i}^\nu\,\dd x^i)\,e_\nu$.  By the Jacobi field argument similar to that in section \ref{sec_coordinate},
\begin{align} \begin{split} \begin{cases}
g_{ij}(\bx,\by) = g_{ij} - 2y^\bt\hm{\bt}{i}{j} - y^\mu y^\nu(\RM{j}{\mu}{i}{\nu} - \hm{\mu}{i}{k}\hm{\nu}{j}{\ell}g^{k\ell} - A_{\mu\,i}^\bt A_{\nu\,j}^\bt) + \CO(|\by|^3) ~, & \\
g_{i\mu}(\bx,\by) = y^\nu A_{\nu\,i}^\mu + \CO(|\by|^2) ~, & \\
g_{\mu\nu}(\bx,\by) = \dt_{\mu\nu} + \CO(|\by|^2) &
\end{cases} \end{split} \label{graph1} \end{align}
where all the coefficient functions on the right hand side are evaluated at $(\bx,0)\in\Sm$.

For a section $\Gm = \{y^\mu = y^\mu(\bx)\}$, the $ij$ component of the induced metric is
\begin{align}
\td{g}_{ij} &= g_{ij}(\bx,\by) + g_{i\mu}(\bx,\by)\,\pl_j y^\mu + g_{j\mu}(\bx,\by)\,\pl_i y^\mu + g_{\mu\nu}(\bx,\by)(\pl_i y^\mu)(\pl_j y^\nu) \notag \\
\begin{split} &= g_{ij} - 2y^\bt\hm{\bt}{i}{j} - y^\mu y^\nu(\RM{j}{\mu}{i}{\nu} - \hm{\mu}{i}{k}\hm{\nu}{j}{\ell}g^{k\ell}) + y^\mu_{\;;i}y^\mu_{\;;j} \\
&\qquad + \CO(|\by|^3) + \CO(|\by|^2)(\pl\by) + \CO(|\by|^2)(\pl\by)^2
\end{split} \label{graph2} \end{align}
where $y^\mu_{\;;i} = \pl_i y^\mu + A^\mu_{\nu\,i} y^\nu$.  We will also need
\begin{align}
\td{g}_{\mu\nu} &= \left\langle \left(\frac{\pl~}{\pl y^\mu}\right)^\perp, \left(\frac{\pl~}{\pl y^\nu}\right)^\perp \right\rangle = g_{\mu\nu} - \td{g}^{ij} ( g_{\mu j} + g_{\mu\af}\pl_j y^\af ) ( g_{\nu i} + g_{\nu\bt}\pl_i y^\bt )
\label{graph3} \end{align}
where $\perp$ is the orthogonal projection onto $N\Gm$.

It is convenient to think $\pl_j y^\nu$ as a dummy variable $p^\nu_j$, and \emph{regard them as the same order as $y^\mu$}.  The negative gradient flow of $\vol(\Gm) = \int \sqrt{\det\td{g}} \, \dd x^1\w\cdots\w\dd x^n$ is almost the mean curvature flow.
Recall that
\begin{align*}
\det(I+T) &= 1 + \tr(T) + \oh(\tr^2(T) - \tr(T^2)) + \CO(|T|^3)
\end{align*}
for $T$ small.  With $g^{ij}\hm{\bt}{i}{j} = 0$,
\begin{align*}
\frac{\det\td{g}}{\det g} &= 1 + g^{ij}\left[ y^\mu_{\;;i}y^\mu_{\;;j} - y^\mu y^\nu(R_{j\mu i\nu} - h_{\mu ik}h_{\nu j\ell}g^{k\ell})\right] + \oh (2y^\mu h_{\mu ik}g^{k\ell})(2y^\nu h_{\nu \ell j}g^{ji}) + \CO(3) \\
&= 1 + g^{ij}\left[ y^\mu_{\;;i}y^\mu_{\;;j} - y^\mu y^\nu(R_{j\mu i\nu} + h_{\mu ik}h_{\nu j\ell}g^{k\ell})\right] + \CO(3) ~.
\end{align*}
The notation $\CO(3)$ means $\CO((|\by|^2+|\bp|^2)^{\frac{3}{2}})$ for $\by$, $\bp$ small; see also \eqref{orderk} for the definition of this notation.  Hence,
\begin{align}
\sqrt{\det\td{g}} &= \sqrt{\det g} + \frac{\sqrt{\det g}}{2} g^{ij}\left[ y^\mu_{\;;i}y^\mu_{\;;j} - y^\mu y^\nu(R_{j\mu i\nu} + h_{\mu ik}h_{\nu j\ell}g^{k\ell})\right] + \CE(\bx,\by,\bp)
\label{Lagf} \end{align}
where $\CE = \CO(3)$.
Since the computation is based on the graphical setting, the mean curvature flow equation shall be $(\pl_t\by)^\perp = H_{\Gm_t}$.  With some linear algebraic computations, the mean curvature flow equation reads
\begin{align}
\frac{\pl y^\mu}{\pl t} &= \frac{\td{g}^{\mu\nu}}{\sqrt{\det\td{g}}} \left[ \frac{\dd~}{\dd x^k}\left(\frac{\pl\sqrt{\det\td{g}}}{\pl p^\nu_k}\right) - \frac{\pl\sqrt{\det\td{g}}}{\pl y^\nu} \right] ~.
\label{MCF_graph1} \end{align}
According to \eqref{graph1}, \eqref{graph2}, \eqref{graph3} and \eqref{Lagf}, $\td{g}^{\mu\nu} = \dt^{\mu\nu} + \CO(2)$.

The zero-th order term of \eqref{Lagf} gives the volume of the zero section, and has no contribution in the Euler--Lagrange equation.  The integral of the quadratic term of \eqref{Lagf} is
\begin{align*}
\oh \int_\Sm \left(|\nabla^\perp \by|^2 + \ip{\CR(\by)}{\by} - \ip{\CA(\by)}{\by}\right)\dv_\Sm ~.
\end{align*}
Therefore, its Euler--Lagrange operator is the Jacobi operator $(\nabla^\perp)^*\nabla^\perp + \CR - \CA$ at $\by = 0$, with respect to the volume form $\dv_\Sm$.  It follows that the contribution of the quadratic terms of \eqref{Lagf} to the right hand side of \eqref{MCF_graph1} is
\begin{align*}
& {\td{g}^{\mu\nu}}\sqrt{\frac{\det g}{\det\td{g}}} \left(-[(\nabla^\perp)^*\nabla^\perp + \CR - \CA]\by\right)^\mu \\
=\,& \left(-[(\nabla^\perp)^*\nabla^\perp + \CR - \CA]\by\right)^\mu + \CE^\mu_\nu(\bx,\by,\bp)\left(-[(\nabla^\perp)^*\nabla^\perp + \CR - \CA]\by\right)^\nu
\end{align*}
where $\CE^\mu_\nu = \CO(2)$.

To sum up, the mean curvature flow equation takes the following form
\begin{align*}
\frac{\pl y^\mu}{\pl t} &= \left(-[(\nabla^\perp)^*\nabla^\perp + \CR - \CA]\by\right)^\mu \\
&\quad + \CE^\mu_\nu\cdot\left(-[(\nabla^\perp)^*\nabla^\perp + \CR - \CA]\by\right)^\nu + \frac{\td{g}^{\mu\nu}}{\sqrt{\det\td{g}}} \left[ \frac{\dd~}{\dd x^k}\left(\frac{\pl\CE}{\pl p^\nu_k}\right) - \frac{\pl\CE}{\pl y^\nu} \right] ~.
\end{align*}
The operator in the last line in the remainder operator $\CN$; see \eqref{MCF_graph}.

\subsubsection{The smallness of the remainder operator}

The condition \cite[(C3)]{ref_Naito} says that for any $r>\frac{n}{2}+2$ there exists a constant $C>0$ such that
\begin{align}
\ltn{\CN(\by) - \CN(\td{\by})}{r-1} &\leq C\left( \ltn{\by}{r} \ltn{\by-\td{\by}}{r+1} + \ltn{\by - \td{\by}}{r} \ltn{\td{\by}}{r+1} \right)
\label{C3} \tag{C3} \end{align}
for any two sections $\by,\td{\by}\in L^2_{r+1}$ with $\ltn{\by}{r+1}<1$, $\ltn{\td{\by}}{r+1}<1$.

There are some different types of terms in $\CN$.  What follows is a brief explanation for terms like $f(\bx,\by,\pl\by) (\pl\by) (\pl^2\by)$.  Write
\begin{align*}
& f(\bx,\by,\pl\by) (\pl\by) (\pl^2\by) - f(\bx,\td{\by},\pl\td{\by}) (\pl\td{\by}) (\pl^2\td{\by}) \\
=\,& f(\bx,\by,\pl\by) (\pl\by) \left[ (\pl^2\by) - (\pl^2\td{\by}) \right] \\
&\quad + \left[ h_1(\bx,\by,\td{\by},\pl{\by},\pl\td{\by})\,(\by-\td{\by}) +  h_2(\bx,\by,\td{\by},\pl{\by},\pl\td{\by})\,(\pl\by-\pl\td{\by})\right] (\pl^2\td{\by})
\end{align*}
where $h_1$ and $h_2$ are constructed from the derivatives of $f(\bx,\by,\pl\by) (\pl\by)$ in $\by$ and in $\pl\by$, respectively.  With the following two bullets, $f(\bx,\by,\pl\by) (\pl\by) (\pl^2\by)$ does obey the condition \eqref{C3}.
\begin{itemize}
\item According to \cite[Lemma 9.9]{ref_Palais}, the ``coefficients" $f(\bx,\by,\pl\by)$, $h_1(\bx,\by,\td{\by},\pl{\by},\pl\td{\by})$ and $h_1(\bx,\by,\td{\by},\pl{\by},\pl\td{\by})$ have bounded $H^r$ norm.
\item Due to \cite[Theorem 9.5 and Corollary 9.7]{ref_Palais}, for any $r-1>\frac{n}{2}$, there exists $C'>0$ such that
\begin{align*}
\ltn{\bu\,\bv}{r-1} \leq C'\, \ltn{\bu}{r-1}\, \ltn{\bv}{r-1}
\end{align*}
for any $\bu,\bv\in H^{r-1}$.
\end{itemize}

For other types of terms, the argument is similar.

\begin{appendix}

\section{Computations related to strong stability} \label{apx_stable}

For minimal Lagrangians in a K\"ahler--Einstein manifold and coassociatives in a $G_2$ manifold, the condition \eqref{sstable} can be rewritten as a curvature condition on the submanifold.  One ingredient is the geometric properties of ${\rm U}(n)$ and $G_2$ holonomy.  Another ingredient is the Gauss equation:
\begin{align}
\RM{i}{j}{k}{\ell} - \RM{i}{j}{k}{\ell}^\Sm &= \hm{\af}{i}{\ell}\hm{\af}{j}{k} - \hm{\af}{i}{k}\hm{\af}{j}{\ell} ~.
\end{align}

\subsection{Minimal Lagrangians in K\"ahler--Einstein manifolds} \label{apx_Lag}

Let $(M^{2n},g,J,\om)$ be a K\"ahler--Einstein manifold, where $J$ is the complex structure and $\om$ is the K\"ahler form.  Denote the Einstein constant by $c$; namely,
\begin{align*}
\sum_C\RM{A}{C}{B}{C} = \Ric_{AB} &= c\,g_{AB} ~.
\end{align*}
A submanifold $L^n\subset M^{2n}$ is Lagrangian if $\om|_L$ vanishes.  It implies that $J$ induces an isomorphism between its tangent bundle $TL$ and normal bundle $NL$.  In terms of the notations introduced in \S\ref{sec_notation}, the correspondence is
\begin{align}
v^i e_i \longleftrightarrow v^i\, Je_i ~.
\label{Lag_TN} \end{align}

In particular, if $\{e_1,\cdots,e_n\}$ is an orthonormal frame for $TL$, $\{Je_1,\cdots, Je_n\}$ is an orthonormal frame for $NL$.  Denote $Je_k$ by $e_{J(k)}$, and let
\begin{align*}
C_{kij} &= \hm{J(k)}{i}{j} = \ip{\nabla_{e_i}e_j}{Je_k} ~.
\end{align*}
Since $J$ is parallel, it is easy to verify that $C_{kij}$ is totally symmetric.

Now, suppose that $L$ is also minimal.  By using the correspondence \eqref{Lag_TN}, the strong stability condition \eqref{sstable} can be rewritten as follows.
\begin{align*}
-\RM{i}{J(k)}{i}{J(\ell)}\,v^k\,v^\ell - C_{kij}\,C_{\ell ij}\,v^k\,v^\ell
&= - c\,g_{k\ell}\,v^k\,v^\ell + \RM{J(i)}{J(k)}{J(i)}{J(\ell)}\,v^k\,v^\ell - C_{kij}\,C_{\ell ij}\,v^k\,v^\ell \\
&= -c\,|v|^2 + \RM{i}{k}{i}{\ell}\,v^k\,v^\ell - C_{kij}\,C_{\ell ij}\,v^k\,v^\ell \\
&= -c\,|v|^2 + \RM{i}{k}{i}{\ell}^L\,v^k\,v^\ell + C_{jki}\,C_{j\ell i}\,v^k\,v^\ell - C_{kij}\,C_{\ell ij}\,v^k\,v^\ell \\
&= -c\,|v|^2 + \Ric^L(v,v) ~.
\end{align*}
The first equality uses the K\"ahler--Einstein condition.  The second equality follows from the parallelity of $J$.  The third equality uses the Gauss equation and the minimal condition.  The last equality relies on the fact that $C_{kij}$ is totally symmetric.  This computation says that \eqref{sstable} is equivalent to the condition that $\Ric^L - c$ is a positive definite operator on $TL$.

\subsection{Coassociative submanifolds in $G_2$ manifolds} \label{apx_coa}

In this case, the ambient space is $7$-dimensional, and the submanifold is $4$-dimensional.

\subsubsection{Four dimensional Riemannian geometry}
The Riemann curvature tensor has a nice decomposition in $4$ dimensions.  What follows is a brief summary of the decomposition; readers are directed to \cite{ref_AHS} for more.

Let $\Sm$ be an oriented, $4$-dimensional Riemannian manifold.  The Riemann curvature tensor in general defines a self-adjoint transform on $\Ld^2$ by
\begin{align*}
\fR(e_i\w e_j) &= \oh\RM{k}{\ell}{i}{j}^\Sm\,e_k\w e_\ell ~.
\end{align*}
In $4$ dimensions, $\Ld^2$ decomposes into self-dual, $\Ld_+^2$, and anti-self-dual part, $\Ld_-^2$.  In terms of the decomposition $\Ld^2 = \Ld_+^2\oplus\Ld_-^2$, the curvature map $\fR$ has the form
\begin{align*}
\fR &= \begin{bmatrix}
W_+ + \frac{s}{12}\,\mathbf{I} & B \\ B^T & W_- + \frac{s}{12}\,\mathbf{I}
\end{bmatrix} ~.
\end{align*}
Here, $s = R_{ijij}^\Sm$ is the scalar curvature, $W_\pm$ is the self-dual and anti-self-dual part of the Weyl tensor, $B$ is the traceless Ricci tensor, and $\mathbf{I}$ is the identity homomorphism.

With respect to the basis $\{e_1\w e_2 - e_3\w e_4, e_1\w e_3 + e_2\w e_4, e_1\w e_4 - e_2\w e_3\}$, the lower-right block $W_- + \frac{s}{12}\,\mathbf{I}$ is
\begin{align} {\small \oh \begin{bmatrix}
R_{1212}^\Sm + R_{3434}^\Sm - 2R_{1234}^\Sm & R_{1213}^\Sm + R_{1224}^\Sm - R_{3413}^\Sm - R_{3424}^\Sm & R_{1214}^\Sm - R_{1223}^\Sm - R_{3414}^\Sm + R_{3423}^\Sm \bigskip \\
R_{1312}^\Sm - R_{1334}^\Sm + R_{2412}^\Sm - R_{2434}^\Sm & R_{1313}^\Sm + R_{2424}^\Sm + 2R_{1324}^\Sm & R_{1314}^\Sm - R_{1323}^\Sm + R_{2414}^\Sm - R_{2423}^\Sm \bigskip \\
R_{1412}^\Sm - R_{1434}^\Sm - R_{2312}^\Sm + R_{2334}^\Sm & R_{1413}^\Sm + R_{1424}^\Sm - R_{2313}^\Sm - R_{2324}^\Sm & R_{1414}^\Sm + R_{2323}^\Sm - 2R_{1423}^\Sm
\end{bmatrix}}~.
\label{ASD_curv} \end{align}
The operator will be needed is $W_- - \frac{s}{6}\,\mathbf{I} = (W_- + \frac{s}{12}\,\mathbf{I}) - \frac{s}{4}\,\mathbf{I}$.  One-fourth of the scalar curvature is
\begin{align}
\frac{s}{4} &= \oh\left( R_{1212}^\Sm + R_{3434}^\Sm + R_{1313}^\Sm + R_{2424}^\Sm + R_{1313}^\Sm + R_{2424}^\Sm \right) ~.
\label{4d_scurv} \end{align}

\subsubsection{$G_2$ geometry}

A $7$-dimensional Riemannian manifold $M$ whose holonomy is contained in $G_2$ can be characterized by the existence of a parallel, positive $3$-form $\vph$.  A complete story can be found in \cite[ch.11]{ref_Joyce1}.  In terms of a local orthonormal coframe, the $3$-form and its Hodge star are
\begin{align} \begin{split}
\vph &= \om^{567} + \om^{125} - \om^{345} + \om^{136} + \om^{246} + \om^{147} - \om^{237} ~, \\
*\vph &= \om^{1234} - \om^{1267} + \om^{3467} + \om^{1357} + \om^{3457} - \om^{1456} + \om^{2356}
\end{split} \label{G2_form} \end{align}
where $\om^{123}$ is short for $\om^1\w\om^2\w\om^3$.  It is known that the holonomy is $G_2$ if and only if $\nabla\vph=0$, which is also equivalent to $\dd\vph = 0 = \dd*\vph$.

\begin{rmk}
There are two commonly used conventions for the $3$-form; see \cite{ref_Spiro} for instance.  The convention here is the same as that in \cite{ref_McLean}; the deformation of coassociatives will then be determined by anti-self-dual harmonic forms.  If one use the convention in \cite{ref_Joyce1}, the deformation of coassociatives will be determined by self-dual harmonic forms.
\end{rmk}

The $3$-form $\vph$ determines a product map $\times$ for tangent vectors of $M$.  For any two tangent vectors $X$ and $Y$,
\begin{align*}
X\times Y &= \big( \vph(X,Y,\,\cdot\,) \big)^\sharp ~.
\end{align*}
For instance, $e_1\times e_2 = e_5$.  Since $\vph$ and the metric tensor are both parallel, $\times$ is parallel as well.

As a consequence,
\begin{align*}
R(e_A,e_B)(e_1\times e_2) &= \big(R(e_A,e_B)e_1\big)\times e_2 + e_1\times\big(R(e_A,e_B)e_2\big) ~,
\end{align*}
and its $e_3$-component gives $\RM{5}{3}{A}{B} - \RM{6}{2}{A}{B} - \RM{7}{1}{A}{B} = 0$ for any $A,B\in\{1,\ldots,7\}$.  In total, the parallelity of $\times$ leads to following seven identities:
\begin{align}
\begin{split}
\RM{5}{2}{A}{B} + \RM{6}{3}{A}{B} + \RM{7}{4}{A}{B} &= 0 ~, \\
\RM{5}{1}{A}{B} - \RM{6}{4}{A}{B} + \RM{7}{3}{A}{B} &= 0 ~, \\
\RM{5}{4}{A}{B} + \RM{6}{1}{A}{B} - \RM{7}{2}{A}{B} &= 0 ~, \\
-\RM{5}{3}{A}{B} + \RM{6}{2}{A}{B} + \RM{7}{1}{A}{B} &= 0 ~,
\end{split} \begin{split}
\RM{6}{7}{A}{B} + \RM{1}{2}{A}{B} - \RM{3}{4}{A}{B} &= 0 ~, \\
-\RM{5}{7}{A}{B} + \RM{1}{3}{A}{B} + \RM{2}{4}{A}{B} &= 0 ~, \\
\RM{5}{6}{A}{B} - \RM{1}{4}{A}{B} - \RM{2}{3}{A}{B} &= 0 ~.
\end{split} \label{G2_curv} \end{align}
These identities imply that a $G_2$ manifold is always Ricci flat.

\subsubsection{Coassociative geometry}

According to \cite[\S IV]{ref_HL}, an oriented, $4$-dimensional submanifold $\Sm$ of a $G_2$ manifold is said to be coassociative if $*\vph|_\Sm$ coincides with the volume form of the induced metric.  Harvey and Lawson also proved that if $\vph|_\Sm$ vanishes, there is an orientation on $\Sm$ so that it is coassociative.  Similar to the Lagrangian case, the normal bundle of a coassociative submanifold is canonically isomorphic to an intrinsic bundle.  The following discussion is basically borrowed from \cite[\S4]{ref_McLean}.

\paragraph{\it \;\, Orthonormal frame}
Suppose that $\Sm\subset M$ is coassociative.  One can find a local orthonormal frame $\{e_1,\cdots, e_7\}$ such that $\{e_1,e_2,e_3,e_4\}$ are tangent to $\Sm$, $\{e_5,e_6,e_7\}$ are normal to $\Sm$, and $\vph$ takes the form \eqref{G2_form} in this frame.  Here is a sketch of the construction.  Start with a unit normal vector, $e_5$, and a unit tangent vector, $e_1$, of $\Sm$.  Let $e_2 = e_5\times e_1$.  Then, set $e_3$ to be a unit vector tangent to $\Sm$ and orthogonal to $\{e_1,e_2\}$.  Finally, let $e_4 = e_3\times e_5$, $e_6 = e_1\times e_3$ and $e_7 = e_3\times e_2$.

\paragraph{\it \;\, Normal bundle and second fundamental form}
The normal bundle of $\Sm$ is isomorphic to the bundle of anti-self-dual $2$-forms of $\Sm$ via the following map:
\begin{align}
V &\mapsto (V\lrcorner\,\vph)|_\Sm ~.
\label{coa0} \end{align}
In terms of the above frame, $e_5$ corresponds to $\om^{12}-\om^{34}$, $e_6$ corresponds to $\om^{13}+\om^{24}$, and $e_7$ corresponds to $\om^{14}-\om^{23}$.

As shown in \cite{ref_HL}, a coassociative submanifold must be minimal.  In fact, its second fundamental form has certain symmetry.  For instance,
\begin{align*}
\hm{5}{1}{i} &= \ip{\namb_{e_i}e_1}{e_5} = -\ip{e_1}{\namb_{e_i}(e_6\times e_7)} \\
&= -\ip{e_1}{(\namb_{e_i}e_6)\times e_7} - \ip{e_1}{e_6\times(\namb_{e_i}e_7)} \\
&= -\ip{e_4}{\namb_{e_i}e_6} + \ip{e_3}{\namb_{e_i}e_7} = \hm{6}{4}{i} - \hm{7}{3}{i} ~.
\end{align*}
What follows are all the relations:
\begin{align}  \begin{split}
\hm{5}{2}{i} + \hm{6}{3}{i} + \hm{7}{4}{i} &= 0 ~,  \\
\hm{5}{1}{i} - \hm{6}{4}{i} + \hm{7}{3}{i} &= 0 ~,
\end{split}  \begin{split}
\hm{5}{4}{i} + \hm{6}{1}{i} - \hm{7}{2}{i} &= 0 ~,  \\
-\hm{5}{3}{i} + \hm{6}{2}{i} + \hm{7}{1}{i} &= 0
\end{split}  \label{coa1}  \end{align}
for any $i\in\{1,2,3,4\}$.  These relations imply that the mean curvature vanishes.  They can be encapsulated as $\sum_j e_j\times\II(e_i,e_j) = 0$.

\subsubsection{Strong stability for coassociatives}
For any sections of $N\Sm$, $\bv$, denote the symmetric bilinear form on the left hand side of \eqref{sstable} by $Q(\bv,\bv)$.  Under the identification \eqref{coa0}, $\td{Q}(\bv,\bv) = -2\,\bv^T\,W_-\,\bv + \frac{s}{3}|\bv|^2$ is also a symmetric bilinear form.

We now check that $Q(\bv,\bv) = \td{Q}(\bv,\bv)$ for any unit vector $\bv\in N_p\Sm$ at any $p\in\Sm$.  As explained above, we may take $e_5 = \bv$ and construct the other orthonormal vectors.  With respect such a frame, it follows from \eqref{ASD_curv} and \eqref{4d_scurv} that
\begin{align*}
\td{Q}(\bv,\bv) &= \RM{1}{3}{1}{3}^\Sm + \RM{2}{4}{2}{4}^\Sm + \RM{1}{3}{1}{3}^\Sm + \RM{2}{4}{2}{4}^\Sm + 2\RM{1}{2}{3}{4}^\Sm ~.
\end{align*}
The quantity $Q(\bv,\bv)$ can be rewritten as follows.
\begin{align*}
Q(\bv,\bv) &= -\sum_{i}\RM{i}{5}{i}{5} - \sum_{i,j}(\hm{5}{i}{j})^2 \\
&= \RM{6}{5}{6}{5} + \RM{7}{5}{7}{5} - \sum_{i,j}(\hm{5}{i}{j})^2 \\
&= \RM{1}{3}{1}{3} + \RM{2}{4}{2}{4} + \RM{1}{3}{1}{3} + \RM{2}{4}{2}{4} - 2\RM{1}{4}{2}{3} + 2\RM{1}{3}{2}{4} - \sum_{i,j}(\hm{5}{i}{j})^2 \\
&= \RM{1}{3}{1}{3} + \RM{2}{4}{2}{4} + \RM{1}{3}{1}{3} + \RM{2}{4}{2}{4} +2\RM{1}{2}{3}{4} - \sum_{i,j}(\hm{5}{i}{j})^2 ~.
\end{align*}
The second equality follows from Ricci flatness.  The third equality uses \eqref{G2_curv}.  The last equality is the first Bianchi identity.  With the Gauss equation and some simple manipulation, 
\begin{align}
& Q(\bv,\bv) - \td{Q}(\bv,\bv) \notag \\
=\,& \sum_{\af}\left( (\hm{\af}{1}{4} + \hm{\af}{2}{3})^2 + (\hm{\af}{1}{3} - \hm{\af}{2}{4})^2 - (\hm{\af}{1}{1} + \hm{\af}{2}{2})(\hm{\af}{3}{3} + \hm{\af}{4}{4}) \right) - \sum_{i,j}(\hm{5}{i}{j})^2 ~. \label{coa2}
\end{align}

By appealing to \eqref{coa1},
\begin{align*}  \begin{split}
\hm{6}{1}{4} + \hm{6}{2}{3} &= - \hm{5}{2}{2} - \hm{5}{4}{4} = \hm{5}{1}{1} + \hm{5}{3}{3} ~, \\
\hm{7}{1}{4} + \hm{7}{2}{3} &= - \hm{5}{1}{2} + \hm{5}{3}{4} ~,
\end{split}  \begin{split}
\hm{6}{1}{3} - \hm{6}{2}{4} &= - \hm{5}{1}{2} - \hm{5}{3}{4} ~, \\
\hm{7}{1}{3} - \hm{7}{2}{4} &= - \hm{5}{2}{2} + \hm{5}{3}{3} = - \hm{5}{1}{1} - \hm{5}{4}{4} ~,
\end{split}  \end{align*}
and
\begin{align*}
\hm{6}{1}{1} + \hm{6}{2}{2} = - \hm{6}{3}{3} - \hm{6}{4}{4} &= - \hm{5}{1}{4} + \hm{5}{2}{3} ~, \\
\hm{7}{1}{1} + \hm{7}{2}{2} = - \hm{7}{3}{3} - \hm{7}{4}{4} &= \hm{5}{1}{3} + \hm{5}{2}{4} ~.
\end{align*}
By using these relations, it is not hard to verify that \eqref{coa2} vanishes.  Therefore, the strong stability condition \eqref{sstable} is equivalent to the positivity of $-2W_- + \frac{s}{3}$.

As a final remark, this equivalence can also be seen by combining \cite[Theorem 4.9]{ref_McLean} and the Weitzenb\"ock formula \cite[Appendix C]{ref_FUl}.  Nevertheless it is nice to derive the equivalence directly by highlighting the geometry of $G_2$.

\section{Evolution equation for tensors} \label{apx_evol}

Suppose that $\Psi$ be a tensor defined on $M$ of type $(0,3)$.  The main purpose of this section is to calculate its evolution equation along the mean curvature flow.  Since there will be some different connections, we denote the Levi-Civita connection of $(M,g)$ by $\bn$ to avoid confusions.

Let $\Gm_t$ be the mean curvature flow at time $t$.  The tensor $\Psi$ is a section of $(T^*M\ot T^*M\ot T^*M)|_{\Gm^t}$.  The connection $\bn$ naturally induces a connection $\nres$ on this bundle. The only difference between $\bn$ and $\nres$ is that the direction vector in $\nres$ must be tangent to $\Gm_t$.

\begin{center}\begin{tabular}{ c | l}
connection & bundle and base \\ \hline
$\bn$ & Levi-Civita connection of $(M,g)$ \\
$\nt$ & Levi-Civita connection of $\Gm_t$ with the induced metric \\
$\nnor$ & connection of the normal bundle of $\Gm_t$ \\
$\nres$ & connection of $(T^*M\ot T^*M\ot T^*M)|_{\Gm^t}$ \\
$\namb$ & connection of $T^*\Gm_t\ot T^*\Gm_t\ot N^*\Gm_t$ defined by \eqref{Codazzi2}
\end{tabular}\end{center}
From the construction, $\namb$ is the composition of $\nres$ with the orthogonal projection.


\begin{prop}
Let $\Psi$ be a tensor of type $(0,3)$ defined on the ambient manifold $M$.  Along the mean curvature flow $\Gm_t$ in $M$,
\begin{align}
\frac{\dd}{\dd t}|\IIt-\Psi|^2 \leq \Dt|\IIt - \Psi|^2 - |\nres(\IIt - \Psi)|^2 + c(|\IIt - \Psi|^4 + |\IIt - \Psi|^2 + 1)
\end{align}
where $c>0$ is determined by the Riemann curvature tensor of $M$ and the sup-norm of $\Psi$, $\bn\Psi$, $\bn^2\Psi$.
\end{prop}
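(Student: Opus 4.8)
The strategy is to rerun the computation behind \eqref{|A|^2} from \cite{ref_W1} for the shifted tensor $T := \IIt - \Psi$, where both $\IIt$ and the restriction of $\Psi$ are regarded as sections of the ambient bundle $(T^*M)^{\otimes 3}|_{\Gm_t}$ equipped with the connection $\nres$ and the metric induced from $g$ (so the bundle metric is $\nres$-parallel). For any section $T$ of this bundle one has the elementary identities
\[
\frac{\dd}{\dd t}|T|^2 = 2\Big\langle \frac{\nres}{\pl t}T,\, T\Big\rangle, \qquad
\Dt|T|^2 = 2\langle \Dt T,\, T\rangle + 2|\nres T|^2,
\]
where $\Dt T = \tr_{\Gm_t}\nres^2 T$ is the rough Laplacian; subtracting them reduces the proposition to an estimate for $\langle (\tfrac{\nres}{\pl t} - \Dt)T,\, T\rangle$.

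First I would record the evolution of $\IIt$ itself, in its tensorial form underlying \eqref{|A|^2}: along the mean curvature flow,
\[
\Big(\frac{\nres}{\pl t} - \Dt\Big)\IIt = \IIt\ast\IIt\ast\IIt + R\ast\IIt + \bn R,
\]
where $\ast$ denotes metric contractions and $R$, $\bn R$ are the Riemann curvature of $M$ and its covariant derivative; no term involving $\nres\IIt$ appears, consistent with the clean $-2|\nabla^{\Gm_t}\IIt|^2$ in \eqref{|A|^2} (the difference between $\nres\IIt$ and the covariant derivative $\namb\IIt$ of \eqref{Codazzi2} is only a further $\IIt\ast\IIt$ contribution, absorbed into the cubic part). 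Next I would compute the evolution of the restriction of $\Psi$. Since $\Psi$ is a fixed tensor on $M$ and the flow moves with velocity $H_t$, we have $\frac{\nres}{\pl t}\Psi = \bn_{H_t}\Psi$. For the spatial Laplacian, $\nres_{\te_k}\Psi = (\bn_{\te_k}\Psi)|_{\Gm_t}$, and commuting covariant derivatives while using $\bn_{\te_k}\te_k = \nt_{\te_k}\te_k + \IIt(\te_k,\te_k)$ together with $\sum_k\IIt(\te_k,\te_k) = H_t$ gives $\Dt\Psi = \tr_{\Gm_t}(\bn^2\Psi) + \bn_{H_t}\Psi$. Hence the two $\bn_{H_t}\Psi$ terms cancel and $\big(\tfrac{\nres}{\pl t} - \Dt\big)\Psi = -\tr_{\Gm_t}(\bn^2\Psi)$, which is pointwise bounded by $n\,\|\bn^2\Psi\|_{\infty}$.

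Subtracting, $\big(\tfrac{\nres}{\pl t} - \Dt\big)T = \IIt\ast\IIt\ast\IIt + R\ast\IIt + \bn R + \tr_{\Gm_t}(\bn^2\Psi)$. Along the flow $\Gm_t$ remains inside the tubular neighborhood $U_\vep$ (by the a priori estimates of Theorem \ref{longtime}), so $R$, $\bn R$, $\Psi$, $\bn\Psi$, $\bn^2\Psi$ are all pointwise bounded by constants depending only on the geometry of $M$ near $\Sm$ and on $\|\Psi\|_\infty$, $\|\bn\Psi\|_\infty$, $\|\bn^2\Psi\|_\infty$. Substituting $\IIt = T + \Psi$ and expanding, the right-hand side becomes $T\ast T\ast T$ plus terms at most quadratic in $T$ with bounded coefficients; pairing with $T$ and applying Cauchy--Schwarz and Young's inequality ($|T|^3\le|T|^4+|T|^2$, $|T|\le 1+|T|^2$) gives $\langle(\tfrac{\nres}{\pl t} - \Dt)T,\,T\rangle \le c'\,(|T|^4 + |T|^2 + 1)$. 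Combining with the two identities above,
\[
\frac{\dd}{\dd t}|T|^2 - \Dt|T|^2 = 2\Big\langle\big(\tfrac{\nres}{\pl t} - \Dt\big)T,\,T\Big\rangle - 2|\nres T|^2 \le -|\nres T|^2 + c\,(|T|^4 + |T|^2 + 1),
\]
which is the claimed inequality; the gradient term survives because $\nres T = \nres\IIt - \nres\Psi$ with $\nres\Psi$ bounded.

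The step I expect to be the main obstacle is the first one: establishing the tensorial evolution equation for $\IIt$ as a section of the ambient bundle. This requires carefully tracking the distinctions between $\bn$, $\nres$, $\namb$, $\nt$, $\nnor$ and, above all, the way the tangent/normal splitting of $\Gm_t$ moves in time, so as to confirm that no spurious $\nres\IIt$-linear term appears and that the quadratic and curvature terms match \eqref{|A|^2}. Once this is in hand the remainder is bookkeeping: $\Psi$ enters only through the bounded quantities $\|\Psi\|_\infty$, $\|\bn\Psi\|_\infty$, $\|\bn^2\Psi\|_\infty$, matching the dependence of $c$ asserted in the statement.
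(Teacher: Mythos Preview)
Your overall framework---viewing $T=\IIt-\Psi$ as a section of the ambient bundle $(T^*M)^{\otimes 3}|_{\Gm_t}$ with the connection $\nres$, and reducing everything to an estimate on $\langle(\tfrac{\nres}{\pl t}-\Dt)T,T\rangle$---is essentially the same as the paper's, which expands $|\IIt-\Psi|^2=|\IIt|^2-2\langle\IIt,\Psi\rangle+|\Psi|^2$ and computes each term in components before reassembling. Your treatment of $\Psi$ (the cancellation of $\bn_{H_t}\Psi$ leaving only $\tr_{\Gm_t}\bn^2\Psi$) matches the paper's \eqref{a_PsiH} and \eqref{a_dPsi}.

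There is, however, a real gap, located exactly where you predicted. The assertion that $(\tfrac{\nres}{\pl t}-\Dt)\IIt$ has no $\nres\IIt$-linear term is \emph{false} in the ambient bundle. Your justification---that \eqref{|A|^2} has a clean $-2|\nabla^{\Gm_t}\IIt|^2$---is circular: \eqref{|A|^2} is stated with the intrinsic derivative $\namb$ on $T^*\Gm_t\otimes T^*\Gm_t\otimes N^*\Gm_t$, not with $\nres$. You are right that $\nres\IIt-\namb\IIt$ is of type $\IIt\ast\IIt$, but the difference of the \emph{Laplacians} picks up one more derivative and is of type $(\namb\IIt)\ast\IIt+\IIt\ast\IIt\ast\IIt$. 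The paper computes this explicitly: for instance \eqref{a_dPsi22} gives $(\nres^*\nres\IIt)(\pt{i},\pt{j},\pt{l})=2\htdf{\af}{i}{j}{k}\htd{\af}{k}{l}+\htd{\af}{i}{j}\td{h}_{\af,l}+\htd{\af}{i}{j}\RTD{\af}{k}{k}{l}$, while the corresponding component of $\tfrac{\nres}{\pl t}\IIt$ (coming from the frame evolution \eqref{a_xinH}) is only $-\htd{\af}{i}{j}\td{h}_{\af,l}$. The $\td{h}_{\af,l}$ terms cancel, but $2\htdf{\af}{i}{j}{k}\htd{\af}{k}{l}$ survives in $(\tfrac{\nres}{\pl t}-\Dt)\IIt$. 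Analogous first-order terms appear in the other off-diagonal slots \eqref{a_dPsi23}.

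The fix is easy and is what the paper does: pair these extra terms with $T$ and apply Cauchy--Schwarz, $|(\namb\IIt)\ast\IIt\ast T|\le\tfrac12|\namb\IIt|^2+c\,|\IIt|^2|T|^2$. Since $|\nres T|^2\ge|\nres\IIt|^2-|\bn\Psi|^2\ge|\namb\IIt|^2+c''|\IIt|^4-|\bn\Psi|^2$ (the off-diagonal components of $\nres\IIt$ being $\IIt\ast\IIt$), one copy of $|\nres T|^2$ absorbs the $\tfrac12|\namb\IIt|^2$. This is exactly where your drop from $-2|\nres T|^2$ to $-|\nres T|^2$ is spent, but your text attributes that drop to the wrong mechanism.
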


\begin{proof}

The mean curvature flow can be regarded as a map from $\Gm_0\times[0,\epsilon)\to M$.  For any $p\in\Gm_0$ and $t_0\in[0,\epsilon)$, choose a geodesic coordinate for $\Gm_0$ at $p$: $\{\td{x}^1,\cdots,\td{x}^n\}$.  We also choose a local orthonormal frame $\{\te_{\af}\}$ for $N\Gm_t$.  The following computations on derivatives are \emph{always evaluated at the point $(p,t_0)$}.

Let $H = \td{h}_\af \td{e}_{\af}$ be the mean curvature vector of $\Gm_t$.  The components of the second fundamental form and its covariant derivative are denoted by
\begin{align*}
\htd{\af}{i}{j} &= \ip{\bn_{\pt{i}}\pt{j}}{\te_{\af}} = \II(\pt{i},\pt{j},\te_{\af}) ~, \\
\htdf{\af}{i}{j}{k} &= (\namb_{\pt{k}}\II)(\pt{i},\pt{j},\te_{\af}) ~, \\
\td{h}_{\af,k} &= \ip{\nnor_{\pt{k}} H}{\te_{\af}} ~.
\end{align*}
At $(p,t_0)$, $\td{h}_\af = \htd{\af}{k}{k}$ and $\td{h}_{\af,i} = \htdf{\af}{k}{k}{i}$.

Note that on $\Gm_0\times[0,\epsilon)$, $H$ is $\pl_t$, and thus commutes with $\pt{i}$.  It follows that the evolution of the metric is:
\begin{align}
\frac{\dd}{\dd t} \td{g}_{ij} &= H\ip{\pt{i}}{\pt{j}} = \ip{\bn_{H}\pt{i}}{\pt{j}} + \ip{\pt{i}}{\bn_H\pt{j}} \notag \\
&= -\ip{H}{\bn_{\pt{i}}\pt{j}} - \ip{\bn_{\pt{j}}\pt{i}}{H} = -2\td{h}_\af \htd{\af}{i}{j} ~,  \label{a_dg} \\
\frac{\dd}{\dd t}\td{g}^{ij} &= 2\td{h}_\af\htd{\af}{i}{j} ~.  \label{a_dgi}
\end{align}
The covariant derivative of $\pt{i}$ and $\te_\af$ along $H$ can be expressed as follows:
\begin{align}
\bn_H\pt{i} &= \ip{\bn_H\pt{i}}{\pt{j}}\pt{j} + \ip{\bn_H\pt{i}}{\te_\af}\te_\af \notag \\
&= - \td{h}_\af \htd{\af}{i}{j}\,\pt{j} + \td{h}_{\af,i}\,\te_\af ~,  \label{a_xinH}\\
\bn_H\te_\af &= \ip{\bn_H\te_\af}{\pt{i}}\pt{i} + \ip{\bn_H\te_\af}{\te_\bt}\te_\bt \notag \\
&= -\td{h}_{\af,i}\,\pt{i} + \ip{\bn_H\te_\af}{\te_\bt}\te_\bt ~.  \label{a_einH}
\end{align}

The last part of the preparation is to relate the covariant derivative of $\Psi$ in $H$ to its Bochner--Laplacian in the ambient manifold $M$.
\begin{align}
\bn_H\Psi &= \bn_{(\bn_{\pt{j}}\pt{j})^\perp}\Psi = - \bn_{\nsub_{\pt{j}}\pt{j}}\Psi + \bn_{\bn_{\pt{j}}\pt{j}}\Psi \notag \\
&= \left( \nres_{\pt{j}}\nres_{\pt{j}}\Psi - \nres_{\nsub_{\pt{j}}\pt{j}}\Psi \right) + \left( -\bn_{\pt{j}}\bn_{\pt{j}}\Psi + \bn_{\bn_{\pt{j}}\pt{j}}\Psi \right) \notag \\
&= - \nres^*\nres\Psi + \tr_{\Gm_t}(\bn^2\Psi) ~. \label{a_PsiH}
\end{align}
Indeed, $\nsub_{\pt{j}}\pt{j}$ is zero at $(p,t_0)$.  The tensor $\bn^*\bn\Psi$ is defined in the ambient space, and has nothing to do with the submanifold $\Gm_t$.  It follows from \eqref{a_PsiH} that the evolution of $|\Psi|^2$ is
\begin{align}
\frac{\dd}{\dd t}|\Psi|^2 &= H\big(\ip{\Psi}{\Psi}\big) = 2\ip{\bn_H\Psi}{\Psi} \notag \\
&= -2\ip{\nres^*\nres\Psi}{\Psi} + 2\ip{\tr_{\Gm_t}(\bn^2\Psi)}{\Psi} \notag \\
&= \Dt |\Psi|^2 - 2|\nres\Psi|^2 + 2\ip{\tr_{\Gm_t}(\bn^2\Psi)}{\Psi}  \label{a_dPsi}
\end{align}

The next task is to calculate the evolution equation for $\ip{\IIt}{\Psi} = \td{g}^{ik} \td{g}^{jl} \htd{\af}{k}{l}\td{\Psi}_{\af ij}$ where $\td{\Psi}_{\af ij} = \Psi(\pt{i},\pt{j},\te_\af)$.  According to \eqref{a_xinH} and \eqref{a_einH},
\begin{align}
\frac{\dd}{\dd t}\td{\Psi}_{\af ij} &= H\big(\Psi(\pt{i},\pt{j},\te_\af)\big) \notag \\
&= (\bn_H\Psi)(\pt{i},\pt{j},\te_\af) + \Psi(\pt{i},\pt{j},\bn_H\te_\af) + \Psi(\bn_H\pt{i},\pt{j},\te_\af) + \Psi(\pt{i},\bn_H\pt{j},\te_\af) \notag \\
\begin{split} &= (\bn_H\Psi)(\pt{i},\pt{j},\te_\af) - \td{h}_{\af,k}\td{\Psi}_{kij} + \ip{\bn_H\te_\af}{\te_\bt}\td{\Psi}_{\bt ij} \\
&\quad - \td{h}_\gm \htd{\gm}{i}{k}\td{\Psi}_{\af kj} + \td{h}_{\gm,i}\td{\Psi}_{\af\gm j} - \td{h}_\gm \htd{\gm}{j}{k}\td{\Psi}_{\af ik} +　\td{h}_{\gm,j}\td{\Psi}_{\af i\gm}
\end{split}  \label{a_dPsi1}  \end{align}

The difference between $\nres^*\nres\IIt$ and  $\namb^*\namb\IIt$ is:
\begin{align}
\nres^*\nres\IIt - \namb^*\namb\IIt &= \nres_{\pt{k}}\nres_{\pt{k}}\IIt - \namb_{\pt{k}}\namb_{\pt{k}}\IIt
\label{a_dPsi2} \end{align}
Since
\begin{align*}
(\nres_{\pt{k}}\II^t)(\cdot,\cdot,\cdot) &= (\bn_{\pt{k}}\II^t)(\cdot,\cdot,\cdot) \\
&= \pt{k}\big( \IIt(\cdot,\cdot,\cdot) \big) - \IIt\big((\bn_{\pt{k}}\cdot)^{T},\cdot,\cdot\big) - \IIt\big(\cdot,(\bn_{\pt{k}}\cdot)^{T},\cdot\big) - \IIt\big(\cdot,\cdot,(\bn_{\pt{k}}\cdot)^{\perp}\big) ~,
\end{align*}
the tensor $\nres_{\pt{k}}\IIt$ has only the following components:
\begin{align} \begin{split}
(\nres_{\pt{k}}\IIt)(\pt{i},\pt{j},\te_\af) &= (\namb_{\pt{k}}\IIt)(\pt{i},\pt{j},\te_\af) = \htdf{\af}{i}{j}{k} ~, \\
(\nres_{\pt{k}}\IIt)(\pt{i},\pt{j},\pt{l}) &= -\htd{\af}{i}{j} \htd{\af}{k}{l} ~, \\
(\nres_{\pt{k}}\IIt)(\te_\bt,\pt{j},\te_\af) &= \htd{\bt}{k}{i} \htd{\af}{i}{j} ~, \\
(\nres_{\pt{k}}\IIt)(\pt{i},\te_\bt,\te_\af) &= \htd{\bt}{k}{j} \htd{\af}{i}{j} ~.
\end{split} \label{a_dII} \end{align}
The above four equations hold everywhere, but not only at $(p,t_0)$.  It follows that
\begin{align*}
(\nres_{\pt{k}}\nres_{\pt{k}}\IIt)(\pt{i},\pt{j},\te_\af) &= \pt{k} \left( (\nres_{\pt{k}}\IIt)(\pt{i},\pt{j},\te_\af) \right) - (\nres_{\pt{k}}\IIt)\big( \pt{i},\pt{j},\bn_{\pt{k}}\te_\af \big) \\
&\quad - (\nres_{\pt{k}}\IIt)\big( \bn_{\pt{k}}\pt{i},\pt{j},\te_\af \big) - (\nres_{\pt{k}}\IIt)\big( \pt{i},\bn_{\pt{k}}\pt{j},\te_\af \big) \\
&= (\namb_{\pt{k}}\namb_{\pt{k}}\IIt)(\pt{i},\pt{j},\te_\af) - (\nres_{\pt{k}}\IIt)\big( \pt{i},\pt{j},(\bn_{\pt{k}}\te_\af)^T \big) \\
&\quad - (\nres_{\pt{k}}\IIt)\big( (\bn_{\pt{k}}\pt{i})^\perp,\pt{j},\te_\af \big) - (\nres_{\pt{k}}\IIt)\big( \pt{i},(\bn_{\pt{k}}\pt{j})^\perp,\te_\af \big) \\
&= (\namb_{\pt{k}}\namb_{\pt{k}}\IIt)(\pt{i},\pt{j},\te_\af) - \htd{\bt}{i}{j} \htd{\bt}{k}{l} \htd{\af}{k}{l} - \htd{\bt}{k}{l} \htd{\af}{l}{j} \htd{\bt}{k}{i} - \htd{\bt}{k}{j} \htd{\bt}{k}{l} \htd{\af}{i}{l} ~.
\end{align*}
Use \eqref{a_dPsi2} to rewrite the above computation as
\begin{align}
(\nres^*\nres\IIt - \namb^*\namb\IIt)(\pt{i},\pt{j},\te_\af) &= \htd{\bt}{i}{j} \htd{\bt}{k}{l} \htd{\af}{k}{l} + \htd{\bt}{k}{l} \htd{\af}{l}{j} \htd{\bt}{k}{i} + \htd{\bt}{k}{j} \htd{\bt}{k}{l} \htd{\af}{i}{l}
\label{a_dPsi21}\end{align}

The tensor $\namb^*\namb\IIt$ does not have other components.  However, $\nres^*\nres\IIt$ does.
\begin{align}
(\nres^*\nres\IIt)(\pt{i},\pt{j},\pt{l}) &= -(\nres_{\pt{k}}\nres_{\pt{k}}\IIt)(\pt{i},\pt{j},\pt{l}) \notag \\
&= - \pt{k}\big((\nres_{\pt{k}}\IIt)(\pt{i},\pt{j},\pt{l})\big) + (\nres_{\pt{k}}\IIt)(\bn_{\pt{k}}\pt{i},\pt{j},\pt{l}) \notag \\
&\quad + (\nres_{\pt{k}}\IIt)(\pt{i},\bn_{\pt{k}}\pt{j},\pt{l}) + (\nres_{\pt{k}}\IIt)(\pt{i},\pt{j},\bn_{\pt{k}}\pt{l}) \notag \\
&= \pt{k}(\htd{\af}{i}{j} \htd{\af}{k}{l}) + \htdf{\af}{i}{j}{k} \htd{\af}{k}{l}  \notag \\
&= 2\htdf{\af}{i}{j}{k} \htd{\af}{k}{l} + \htd{\af}{i}{j} \htdf{\af}{k}{l}{k} \notag \\
&= 2\htdf{\af}{i}{j}{k} \htd{\af}{k}{l} + \htd{\af}{i}{j} \td{h}_{\af,l} + \htd{\af}{i}{j}\RTD{\af}{k}{k}{l} ~.  \label{a_dPsi22}
\end{align}
The second last equality uses the fact that $0 = \ip{\nnor_{\pt{k}}\te_\af}{\te_\bt}\htd{\bt}{i}{j}\htd{\af}{k}{l} - \ip{\nnor_{\pt{k}}\te_\af}{\te_\bt}\htd{\af}{i}{j}\htd{\bt}{k}{l}$.  The last equality uses the Codazzi equation \eqref{Codazzi1}.  Similarly,
\begin{align} \begin{split}
(\nres^*\nres\IIt)(\te_\bt,\pt{j},\te_\af) &= - 2\htdf{\af}{l}{j}{k} \htd{\bt}{k}{l} - \htdf{\bt}{k}{l}{k} \htd{\af}{j}{l} \\
&= - 2\htdf{\af}{l}{j}{k} \htd{\bt}{k}{l} - \td{h}_{\bt,l} \htd{\af}{j}{l} - \RTD{\bt}{k}{k}{l} \htd{\af}{j}{l} \\
(\nres^*\nres\IIt)(\pt{i},\te_\bt,\te_\af) &= - 2\htdf{\af}{i}{l}{k} \htd{\bt}{k}{l} - \htdf{\bt}{k}{l}{k} \htd{\af}{i}{l} \\
&= - 2\htdf{\af}{i}{l}{k} \htd{\bt}{k}{l} - \td{h}_{\bt,l} \htd{\af}{i}{l} - \RTD{\bt}{k}{k}{l} \htd{\af}{i}{l} \\
(\nres^*\nres\IIt)(\pt{i},\te_\af,\pt{j})  &= - 2\htd{\af}{k}{l} \htd{\bt}{i}{l} \htd{\bt}{k}{j} \\
(\nres^*\nres\IIt)(\te_\bt,\te_\gm,\te_\af)  &= - 2\htd{\bt}{k}{l} \htd{\gm}{k}{j} \htd{\af}{l}{j} \\
(\nres^*\nres\IIt)(\te_\bt,\pt{j},\pt{i}) &= 2\htd{\bt}{k}{l} \htd{\af}{l}{j} \htd{\af}{k}{i} \\
(\nres^*\nres\IIt)(\te_\bt,\te_\af,\pt{j}) &= 0
\end{split}  \label{a_dPsi23}  \end{align}


The evolution equation for $\htd{\af}{i}{j}$ was derived in \cite[Proposition 7.1]{ref_W1}.  With \eqref{a_dPsi1} and \eqref{a_PsiH}, we have
\begin{align*}
\frac{\dd}{\dd t}\ip{\IIt}{\Psi} &= \frac{\dd}{\dd t}\left(\td{g}^{ik} \td{g}^{jl} \htd{\af}{k}{l}\td{\Psi}_{\af ij}\right) \\
&= 2\htd{\bt}{i}{k}\td{h}_\bt\htd{\af}{k}{j}\td{\Psi}_{\af i j} + 2\htd{\bt}{j}{l}\td{h}_\bt\htd{\af}{i}{l}\td{\Psi}_{\af i j} + \left(\frac{\dd}{\dd t}\htd{\af}{i}{j}\right) \td{\Psi}_{\af ij} + \htd{\af}{i}{j}\left(\frac{\dd}{\dd t}\td{\Psi}_{\af ij}\right) \\
&= 2\htd{\bt}{i}{k}\td{h}_\bt\htd{\af}{k}{j}\td{\Psi}_{\af i j} + 2\htd{\bt}{j}{l}\td{h}_\bt\htd{\af}{i}{l}\td{\Psi}_{\af i j} - \ip{\namb^*\namb\IIt}{\Psi} \\
&\quad + (\bn_{\te_k} R)_{\td{\af}\td{i}\td{j}\td{k}}\td{\Psi}_{\af i j} + (\bn_{\te_j} R)_{\td{\af}\td{k}\td{i}\td{k}}\td{\Psi}_{\af i j} - 2\RTD{l}{i}{j}{k} \htd{\af}{l}{k} \td{\Psi}_{\af i j} + 2\RTD{\af}{\bt}{j}{k} \htd{\bt}{i}{k} \td{\Psi}_{\af i j} \\
&\quad + 2\RTD{\af}{\bt}{i}{k} \htd{\bt}{j}{k} \td{\Psi}_{\af i j}  - \RTD{l}{k}{i}{k} \htd{\af}{l}{j} \td{\Psi}_{\af i j} - \RTD{l}{k}{j}{k} \htd{\af}{l}{i} \td{\Psi}_{\af i j} + \RTD{\af}{k}{\bt}{k} \htd{\bt}{i}{j} \td{\Psi}_{\af i j} \\
&\quad - \htd{\af}{i}{l} \left( \htd{\bt}{l}{j}\td{h}_\bt - \htd{\bt}{l}{k}\htd{\bt}{j}{k} \right) \td{\Psi}_{\af i j} - \htd{\af}{l}{k} \left( \htd{\bt}{l}{j}\htd{\bt}{i}{k} - \htd{\bt}{l}{k}\htd{\bt}{i}{j} \right) \td{\Psi}_{\af i j} \\
&\quad - \htd{\bt}{i}{k} \left( \htd{\bt}{l}{j}\htd{\af}{l}{k} - \htd{\bt}{l}{k}\htd{\af}{l}{j} \right) \td{\Psi}_{\af i j} - \htd{\af}{j}{k}\htd{\bt}{i}{k}\td{h}_\bt \td{\Psi}_{\af i j} + \htd{\bt}{i}{j} \ip{\te_\bt}{\bn_{H}\te_\af} \td{\Psi}_{\af i j} \\
&\quad - \ip{\IIt}{\nres^*\nres\Psi} + \ip{\IIt}{\tr_{\Gm_t}(\bn^*\bn\Psi)} - \htd{\af}{i}{j}\td{h}_{\af,k}\td{\Psi}_{kij} + \htd{\af}{i}{j}\ip{\bn_H\te_\af}{\te_\bt}\td{\Psi}_{\bt ij} \\
&\quad - \htd{\af}{i}{j}\td{h}_\gm \htd{\gm}{i}{k}\td{\Psi}_{\af kj} + \htd{\af}{i}{j}\td{h}_{\gm,i}\td{\Psi}_{\af\gm j} - \htd{\af}{i}{j}\td{h}_\gm \htd{\gm}{j}{k}\td{\Psi}_{\af ik} +　\htd{\af}{i}{j}\td{h}_{\gm,j}\td{\Psi}_{\af i\gm} \\
&= - \ip{\nres^*\nres \IIt}{\Psi} - \ip{\IIt}{\nres^*\nres \Psi} + \ip{\IIt}{\tr_{\Gm_t}(\bn^2\Psi)} \\
&\quad + (\bn_{\te_k} R)_{\td{\af}\td{i}\td{j}\td{k}}\td{\Psi}_{\af i j} + (\bn_{\te_j} R)_{\td{\af}\td{k}\td{i}\td{k}}\td{\Psi}_{\af i j} - 2\RTD{l}{i}{j}{k} \htd{\af}{l}{k} \td{\Psi}_{\af i j} + 2\RTD{\af}{\bt}{j}{k} \htd{\bt}{i}{k} \td{\Psi}_{\af i j} \\
&\quad + 2\RTD{\af}{\bt}{i}{k} \htd{\bt}{j}{k} \td{\Psi}_{\af i j}  - \RTD{l}{k}{i}{k} \htd{\af}{l}{j} \td{\Psi}_{\af i j} - \RTD{l}{k}{j}{k} \htd{\af}{l}{i} \td{\Psi}_{\af i j} + \RTD{\af}{k}{\bt}{k} \htd{\bt}{i}{j} \td{\Psi}_{\af i j} \\
&\quad + \RTD{\af}{k}{k}{l}\htd{\af}{i}{j}\td{\Psi}_{lij} - \RTD{\bt}{k}{k}{l}\htd{\af}{j}{k}\td{\Psi}_{\af\bt j} - \RTD{\bt}{k}{k}{l}\htd{\af}{i}{l}\td{\Psi}_{\af i \bt} \\
&\quad - 2\htd{\af}{i}{l} \left( \htd{\bt}{l}{j}\td{h}_\bt - \htd{\bt}{l}{k}\htd{\bt}{j}{k} \right) \td{\Psi}_{\af i j} - 2 \htd{\bt}{i}{k} \left( \htd{\bt}{l}{j}\htd{\af}{l}{k} - \htd{\bt}{l}{k}\htd{\af}{l}{j} \right) \td{\Psi}_{\af i j} \\
&\quad - 2\htd{\af}{j}{k}\htd{\bt}{i}{k}\td{h}_\bt \td{\Psi}_{\af i j} - 2\htd{\af}{k}{l}\htd{\bt}{i}{l}\htd{\bt}{k}{j}\td{\Psi}_{ji\af} - 2\htd{\bt}{k}{l}\htd{\gm}{k}{j}\htd{\af}{l}{j}\td{\Psi}_{\af\bt\gm} + 2\htd{\bt}{k}{l}\htd{\af}{l}{j}\htd{\af}{k}{i}\td{\Psi}_{i\bt j} \\
&\quad + 2\htdf{\af}{i}{j}{k}\htd{\af}{k}{l}\td{\Psi}_{lij} - 2\htdf{\af}{l}{j}{k}\htd{\bt}{k}{l}\td{\Psi}_{\af \bt j} - \htdf{\af}{i}{l}{k}\htd{\bt}{k}{l}\td{\Psi}_{\af i\bt} ~.
\end{align*}
The last equality uses \eqref{a_dPsi21}, \eqref{a_dPsi22} and \eqref{a_dPsi23} to replace $\namb^*\namb\IIt$ by $\nres^*\nres\IIt$.

By the Cauchy--Schwarz inequality,
\begin{align*}
\left| \frac{\dd}{\dd t}\ip{\IIt}{\Psi} - \Dt\ip{\IIt}{\Psi} + 2\ip{\nres\IIt}{\nres\Psi} \right| &\leq \oh|\namb\IIt|^2 + c(|\IIt|^4 + |\IIt|^2 + 1) ~.
\end{align*}
This together with \eqref{evol3} and \eqref{a_dPsi} imply that
\begin{align*}
\frac{\dd}{\dd t}|\IIt - \Psi|^2 &\leq \Dt|\IIt - \Psi|^2 - 2|\nres(\IIt - \Psi)|^2 + |\namb\IIt|^2 + c'(|\IIt|^4 + |\IIt|^2 + 1) ~.
\end{align*}
According to \eqref{a_dII},
\begin{align*}
|\nres(\IIt - \Psi)|^2 \geq |\nres\II^2| - |\nres\Psi|^2 \geq |\namb\IIt|^2 + c''|\IIt|^4 - |\bn\Psi|^2 ~.
\end{align*}
Hence,
\begin{align*}
\frac{\dd}{\dd t}|\IIt - \Psi|^2 &\leq \Dt|\IIt - \Psi|^2 - |\nres(\IIt - \Psi)|^2 + c'''(|\IIt|^4 + |\IIt|^2 + 1)
\end{align*}
By the triangle inequality $|\IIt|^2\leq |\IIt-\Psi|^2 + |\Psi|^2$, it finishes the proof of the proposition.
\end{proof}

\section{Moser iteration for $\CC^2$ convergence} \label{apx_Moser}

The main purpose of this appendix is to prove the $\CC^2$ convergence part of Theorem \ref{longtime}, in particular $|\IIt - \II^\Sm|^2\rightarrow 0$.  We already show that
\begin{itemize}
\item The mean curvature flow $\{\Gm_t\}$ exists for all time, and the second fundamental form $\IIt$ is uniformly bounded.
\item $\Gm_t$ converges to $\Sm$ in $\CC^1$.
\item The $L^2$ convergence, \eqref{int}, of $\IIt$: $\lim_{t\to 0} \int_{\Gm_t} |\IIt - \II^\Sm|^2 \,\dd\mu_t =0$.
\end{itemize}

When $\IIt$ is uniformly bounded, it is known that all the higher order derivatives of $\IIt$ remains uniformly bounded.  This can be proved by using the evolution equation of $\nabla^{(k)}\IIt$.  See \cite[Proposition 4.8]{ref_Baker}.

With the $\CC^1$ convergence,  for $t$ large enough $\Gm_t$ can be written as the graph of a section of the normal bundle, as in section \ref{sec_nonpara}.  Taking second order derivatives (in space) gives the evolution equation of $\IIt-\II^\Sm$ in the non-parametric form.  The strategy is to apply the Moser iteration argument \cite{ref_Moser,ref_Trudinger} to estimate its sup-norm in terms of the $L^2$ norm.  It together with \eqref{int} would lead to the $\CC^2$ convergence.

The argument will be done over open balls of $\Sm$, and will be demonstrated on the ball of radius $1$.

\subsection{Second fundamental form and second order derivative}
Denote by $\Gm_{**}^*$ the Christoffel symbols of the ambient metric \eqref{graph1}.
It follows from \eqref{graph1} that there are constants $\vep$ and $C$ which have the following significances.
\begin{enumerate}
\item For any section $\by(\bx)$ with $|\by|_{\CC^0}\leq\vep$,
\begin{align*}
\left| \Gm_{ij}^k - \ul{\Gm}_{ij}^k \right| + \left| \Gm_{i\mu}^k + \ul{g}^{kj}h_{\mu ij} \right| + \left| \Gm_{\mu\nu}^k \right| + \left| \Gm_{ij}^\gm - h_{\gm ij} \right| + \left| \Gm_{i\mu}^\gm - A_{\mu i}^\gm \right| + \left| \Gm_{\mu\nu}^\gm \right| &\leq C|\by|_{\CC^0} ~.
\end{align*}
Here, we denote the induced metric on $\Sm$ by $\ul{g}_{ij}$ and its Christoffel symbols by $\ul{\Gm}_{ij}^k$ to avoid confusion.  Underlined geometric quantities depend on $\bx$ only.

\item For any section $\by(\bx)$ with $|\by|_{\CC^1}\leq\vep$, the orthogonal projection of the ambient coordinate vector field $\frac{\pl}{\pl y^\mu}$ to the normal of the section is surjective.  Moreover,
\begin{align*}
\left|\left(\frac{\pl}{\pl x^i}\right)^\perp\right| + \left| \left(\frac{\pl}{\pl y^\mu}\right)^\perp - \frac{\pl}{\pl y^\mu} \right| &\leq C|\by|_{\CC^1} ~.
\end{align*}
\end{enumerate}
Assume $\by = \by(\bx)$ has small $\CC^1$ norm.  Denote its graph, $\{(\bx,\by(\bx))\}$, by $\Gm$.  The tangent space of $\Gm$ is spanned by
\begin{align*}
F_*(\frac{\pl}{\pl x^i}) &= \frac{\pl}{\pl x^i} + \pl_i y^\mu\,\frac{\pl}{\pl y^\mu} ~.
\end{align*}
We compute
\begin{align*}
\nabla_{F_*(\frac{\pl}{\pl x^i})}F_*(\frac{\pl}{\pl x^j}) &= (\pl_i\pl_j y^\mu)\,\frac{\pl}{\pl y^\mu} + \Gm_{ij}^k\frac{\pl}{\pl x^k} + \Gm_{ij}^\mu\frac{\pl}{\pl y^\mu} + \text{terms with coefficients }(\pl_k y) ~,
\end{align*}
and thus
\begin{align*}
\left| \left( \nabla_{F_*(\frac{\pl}{\pl x^i})}F_*(\frac{\pl}{\pl x^j}) \right)^\perp - \left( \pl_i\pl_j y^\mu + h_{\mu ij} \right)\frac{\pl}{\pl y^\mu} \right| &\leq C|\by|_{\CC^1} ~.
\end{align*}
It follows that
\begin{align*}
\left| \II^\Gm - \II^\Sm - \pl^2 \by \right| &\leq C|\by|_{\CC^1} ~.
\end{align*}
Now, suppose that $\Gm_t = \{\by = \by(\bx,t)\}$ is a mean curvature flow which converges to $0$ uniformly in $\CC^1$.  To prove that $\II^{\Gm_t} - \II^\Sm$ converges to zero uniformly, it suffices to show that $\pl^2\by$ converges to zero uniformly.

\subsection{The mean curvature flow equation}
As in section \ref{sec_nonpara}, we introduce the dummy variable $p^\mu_l = \pl_k y^\mu$.  In the following discussion, a function $\CF(\bx,\by,\bp)$ is said to be $\CO(k)$ for some $k\in\BN\cup\{0\}$ if there exist constants $C_{\ell}$ and ${C}_{\ell,\ell'}$ such that
\begin{align} \begin{split}
|\dt^\ell_\bx \CF| &\leq C_\ell\,(|\by|^2+|\bp|^2)^{\frac{k}{2}} ~, \\
|\dt^\ell_\by\dt^{\ell'}_\bp \CF| &\leq {C}_{\ell,\ell'}\,(|\by|^2+|\bp|^2)^{\frac{k-\ell-\ell'}{2}} \quad\text{for any }\ell+\ell'\leq k
\end{split} \label{orderk} \end{align}
(for any $\bx$ in the region of consideration).  The derivative in the variables $(\bx,\by,\bp)$ is denoted by $\dt$ to avoid confusion.

Recall \eqref{Lagf}:
\begin{align}
\sqrt{g} &= \sqrt{\ul{g}} \left( 1 + \oh\left( \ul{g}^{ij}(p^\mu_i + A^\mu_{\nu i}\,y^\nu)(p^\mu_j + A^\mu_{\gm j}\,y^\gm) - \ul{g}^{ij}\,y^\mu\,y^\nu(R_{j\mu i\nu} + \ul{g}^{k\ell}\,h_{\mu ik}\,h_{\nu j\ell}) \right) \right) + \CE(\bx,\by,\bp)
\label{vol_exp} \end{align}
where $\CE(\bx,\by,\bp)$ is $\CO(3)$ in the sense of \eqref{orderk}.

With \eqref{vol_exp}, the mean curvature flow equation \eqref{MCF_graph1} takes the following form:
\begin{align}
\frac{\pl y^\mu}{\pl t} &= (\dt^{\mu\nu} + \CO(2))\left( \frac{\dd}{\dd x^i}\left( \ul{g}^{ij}(\frac{\pl y^\mu}{\pl x^j} + A^\mu_{\nu j}\,y^\nu) + \CO(2) \right) - \CO(1) \right) \notag \\
&= \ul{g}^{ij}(\bx)\cdot\frac{\pl^2 y^\mu}{\pl x^i\pl x^j} + \CE^{\mu ij}_{\nu}(\bx,\by,\bp)\cdot\frac{\pl^2 y^\nu}{\pl x^i\pl x^j} + \CE^\mu(\bx,\by,\bp)
\label{MCF_exp} \end{align}
where both $\CE^{\mu ij}_{\nu}$ and $\CE^\mu$ are of $\CO(1)$.

\subsection{The evolution equation for the second order derivatives}

Denote $\pl_k\pl_\ell y^\mu$ by $q^\mu_{k\ell}$.  We are going to derive the evolution equation for $q^\mu_{k\ell}$.  Remember that the $\CC^1$-norm of $\by$ converges to $0$ as $t\to\infty$, and the $\CC^2$-norm of $\by$ is uniformly bounded.

For the first term on the right hand side of \eqref{MCF_exp},
\begin{align*}
\frac{\pl^2}{\pl x^k\pl x^\ell}\left(\ul{g}^{ij}\frac{\pl^2 y^\mu}{\pl x^i\pl x^j}\right) &= \frac{\pl}{\pl x^i}\left( \ul{g}^{ij}\frac{\pl q^\mu_{k\ell}}{\pl x^j} \right) + \left[ \frac{\pl\ul{g}^{ij}}{\pl x^k}\frac{\pl q^\mu_{ij}}{\pl x^\ell} + \frac{\pl\ul{g}^{ij}}{\pl x^\ell}\frac{\pl q^\mu_{ij}}{\pl x^k} - \frac{\pl\ul{g}^{ij}}{\pl x^i}\frac{\pl q^\mu_{k\ell}}{\pl x^j} \right] + \frac{\pl^2\ul{g}^{ij}}{\pl x^k\pl x^\ell}q^\mu_{ij} \\
&= \frac{\pl}{\pl x^i}\left( \ul{g}^{ij}\frac{\pl q^\mu_{k\ell}}{\pl x^j} \right) + \CO(0)\cdot\pl\bq + \CO(0)\cdot\bq
\end{align*}
For the second term on the right hand side of \eqref{MCF_exp}, one performs the commuting derivatives to find that
\begin{align*}
&\frac{\pl^2}{\pl x^k\pl x^\ell}\left( \CE^{\mu ij}_\nu(\bx,\by,\bp)\cdot\frac{\pl^2 y^\nu}{\pl x^i\pl x^j} \right) \\
=\,& \frac{\pl}{\pl x^i}\left( \CE^{\mu ij}_{\nu}(\bx,\by,\bp)\frac{\pl q^\nu_{k\ell}}{\pl x^j} \right) + \tilde{\CE}_1(\bx,\by,\bp,\bq)\cdot\pl\bq + \tilde{\CE}_0(\bx,\by,\bp,\bq)\cdot\bq
\end{align*}
for some smooth function $\td{\CE}_1$ and $\td{\CE}_0$ in $\bx,\by,\bp = \pl\by,\bq = \pl^2\by$.  Since the $\CC^2$ norm of $\by$ is uniformly bounded, the coefficient functions $\td{\CE}_1$ and $\td{\CE}_0$ are uniformly bounded.
For the last term on the right hand side of \eqref{MCF_exp},
\begin{align*}
\frac{\pl^2}{\pl x^k\pl x^\ell}\CE^\mu &= \left[ \frac{\dt^2\CE^\mu}{\dt x^k\dt x^\ell} + \frac{\dt^2\CE^\mu}{\dt x^\ell\dt y^\nu}\frac{\pl y^\nu}{\pl x^k} + \frac{\dt^2\CE^\mu}{\dt x^k\dt y^\nu}\frac{\pl y^\nu}{\pl x^\ell} + \frac{\dt\CE^\mu}{\dt y^\nu\dt y^\gm}\frac{\pl y^\gm}{\pl x^k}\frac{\pl y^\nu}{\pl x^\ell}\right] \\
&\quad + \hat{\CE}_1(\bx,\by,\bp)\cdot\pl\bq + \hat{\CE}_0(\bx,\by,\bp,\bq)\cdot\bq ~.
\end{align*}
Since the $\CC^1$ norm of $\by$ converges to zero uniformly and $\CE^\mu = \CO(1)$, the first line on the right hand side converges to zero uniformly as $t\to\infty$.  Similar to above, $\hat{\CE}_1$ and $\hat{\CE}_0$ are uniformly bounded.

To sum up, write $q^\mu_{k\ell}$ as $u^A$.  Its evolution equation takes the following form:
\begin{align}
\frac{\pl u^A}{\pl t} &= \frac{\pl}{\pl x^i}\left( \ul{g}^{ij}\frac{\pl u^A}{\pl x^j} + \CP^{Aj}_{Bi}\frac{\pl u^B}{\pl x^j} \right) + \CS^{Aj}_{B}\,\frac{\pl u^B}{\pl x^j} + \CT^A_B\,u^B + \CR^A
\label{MCF_2nd} \end{align}
where $\CS,\CT$ are uniformly bounded, and $\CP,\CR$ converges to $0$ uniformly as $t\to\infty$.  The solution and the coefficient functions are all smooth.  As mentioned in beginning of this appendix, $\nabla^{\Gm_t}\II^t$ is uniformly bounded, and thus $\pl_ju^A$ is uniformly bounded.

\subsection{Moser iteration}

We now apply the Moser iteration to estimate the sup-norm of $u^A$ in terms of its $L^2$ norm.  The formulation here is modified from the argument of Trudinger \cite{ref_Trudinger}.

Let $B$ the open ball of radius $1$, and let $D_T = B\times[T-1,T]$.  Denote by $B_t$ be the slice $B\times\{t\}$ for any $t\in[T-1,T]$.  Introduce the $V^2(D_T)$ norm \cite[(1.5)]{ref_Trudinger}:
\begin{align*}
||f||_{V^2(D_T)} &= \sup_{t\in[T-1,T]}||f||_{L^2(B_t)} + ||f||_{L^2(D_T)} ~.
\end{align*}
The following Sobolev lemma is a fundamental tool for the iteration.
\begin{lem}{\cite[Lemma 1.1]{ref_Trudinger}}
There exist constants $\kp>1$ and $C>0$ (which are independent of $T$) such that for any $f$ with finite $V^2(D_T)$ norm and with compact support in $B_t$ (a.e.\ $t$), $f$ must have finite $L^{2\kp}(D_T)$ norm.  Moreover,
\begin{align}
||f||_{L^{2\kp}(D_T)} &\leq C\,||f||_{V^2(D_T)}
\label{Sobolev} \end{align}
\end{lem}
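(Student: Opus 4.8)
The plan is to recognize \eqref{Sobolev} as the classical parabolic Sobolev inequality of Ladyzhenskaya--Solonnikov--Ural'tseva and Trudinger, and to reconstruct its short proof: interpolate in the space variables between $L^2$ and the elliptic Sobolev exponent on each time slice, then integrate in time. First I would fix a time $t\in[T-1,T]$ and apply the Gagliardo--Nirenberg--Sobolev inequality on the slice $B_t\subset\Sm$: since $f(\cdot,t)$ has compact support, one has $||f(\cdot,t)||_{L^{2^*}(B_t)}\leq C_n\,||\nabla f(\cdot,t)||_{L^2(B_t)}$, where $2^*=\tfrac{2n}{n-2}$ when $n=\dim\Sm\geq 3$, and $2^*$ may be chosen to be any sufficiently large finite exponent when $n\leq 2$. (Here the relevant ball $B$ lives in $\Sm$, so all the spatial inequalities are on an $n$-dimensional domain.)

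Next I would pick the parabolic exponent $q=\tfrac{2(n+2)}{n}$, which satisfies $2<q<2^*$, and set $\kp=q/2=\tfrac{n+2}{n}>1$; this is the $\kp$ of the statement. With $\theta\in(0,1)$ determined by $\tfrac{1}{q}=\tfrac{1-\theta}{2}+\tfrac{\theta}{2^*}$ one checks that $\theta q=2$ and $(1-\theta)q=q-2$, so H\"older's inequality on $B_t$ gives
\begin{align*}
||f(\cdot,t)||_{L^{q}(B_t)}^{q} &\leq ||f(\cdot,t)||_{L^2(B_t)}^{(1-\theta)q}\,||f(\cdot,t)||_{L^{2^*}(B_t)}^{\theta q} \\
&\leq C_n\,\Big(\sup_{s\in[T-1,T]}||f(\cdot,s)||_{L^2(B_s)}\Big)^{q-2}\,||\nabla f(\cdot,t)||_{L^2(B_t)}^{2}~.
\end{align*}
Integrating over $t\in[T-1,T]$ and recalling $q=2\kp$, the left side becomes $||f||_{L^{2\kp}(D_T)}^{2\kp}$ and the right side becomes $C_n\big(\sup_s||f(\cdot,s)||_{L^2(B_s)}\big)^{2\kp-2}||\nabla f||_{L^2(D_T)}^{2}$; since the $V^2(D_T)$ norm dominates both $\sup_s||f(\cdot,s)||_{L^2(B_s)}$ and $||\nabla f||_{L^2(D_T)}$, this is at most $C_n\,||f||_{V^2(D_T)}^{2\kp}$. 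In particular the $L^{2\kp}(D_T)$ norm is finite, and taking $2\kp$-th roots yields \eqref{Sobolev}.

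I do not anticipate any real obstacle: the argument is standard, and the only points needing care are verifying that $q$ lies in the admissible range $(2,2^*)$ for every $n\geq 1$ (immediate from the explicit value of $q$) and that the constants $\kp,C$ are independent of $T$ --- which is clear because $D_T$ is a time-translate of $D_1$ and each step is invariant under translation in the $t$-variable. Alternatively, one may simply invoke \cite[Lemma 1.1]{ref_Trudinger} verbatim.
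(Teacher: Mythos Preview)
Your argument is correct and is precisely the standard proof of the parabolic Sobolev inequality; the paper does not give its own proof but simply quotes the result from \cite[Lemma~1.1]{ref_Trudinger}, so your reconstruction is in full agreement with the cited source. One remark: the definition of $||\cdot||_{V^2(D_T)}$ displayed in the paper has a typo --- the second summand should be $||\nabla f||_{L^2(D_T)}$ rather than $||f||_{L^2(D_T)}$ (as in Trudinger's (1.5), and as is clear from the way the estimate is applied to \eqref{Moser03}); you implicitly used the correct version, which is fine.
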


We proceed with the Moser iteration argument for solutions $u^A$ of \eqref{MCF_2nd}.
There exists $c_0>0$ such that
\begin{align}
\sum_{i,j}\ul{g}^{ij}v^i v^j \geq c_0 |v|^2
\label{Moser00} \end{align}
at any $x\in B$ and for any vector $\{v^i\}\in\BR^n$.  For any $\dt>0$, consider
\begin{align*}
f &= \left(\sum_A (u^A)^2 + \sup_{D_T}\sum_A|\CR^A|^2 + \sup_{D_T}\sum_{i,j,A,B}|\CP^{Aj}_{Bi}| + \dt\right)^\oh ~.
\end{align*}
For any $\bt\geq1$, the partial derivative (in space) of its $(\bt+1)/2$ power is
\begin{align*}
\pl_i f^{\frac{\bt+1}{2}} &= \frac{\bt+1}{2} f^{\frac{\bt-3}{2}}\sum_A(u^A\,\pl_i u^A) ~.
\end{align*}
By the Cauchy--Schwarz inequality,
\begin{align*}
\left| \pl_i f^{\frac{\bt+1}{2}} \right| &\leq \frac{\bt+1}{2} f^{\frac{\bt-3}{2}}\,(\sum_A (u^A)^2)^\oh\,\left(\sum_A(\pl_i u^A)^2\right)^\oh ~.
\end{align*}
It follows that
\begin{align} \begin{split}
\left| \nabla f^{\frac{\bt+1}{2}} \right|^2
&= \frac{(\bt+1)^2}{4}\,f^{{\bt-3}}\,\left|\oh\nabla f^2\right|^2 \\
&\leq \frac{(\bt+1)^2}{4}\,f^{{\bt-1}}\sum_{A,i}(\pl_i u^A)^2 ~.
\end{split} \label{Moser02} \end{align}

Let $B(\rho)$ the open ball of radius $\rho$, and denote $B(\rho)\times[T-\rho^2,T]$ by $R_\rho$.  Note that $R_1 = D_T$, and $R_{\rho'}\subset R_\rho$ for any $\rho'<\rho\leq 1$.  Fix $\rho$ and $\rho'$ with $\oh\leq\rho'<\rho\leq1$.  Let $\eta$ be a cut-off function which is $1$ on $B(\rho')\times[T-(\rho')^2,\infty)$, and vanishes outside $B(\rho)\times[T-\rho^2,\infty)$.  
We compute
\begin{align*}
\int\int \eta^2\frac{\pl}{\pl t}f^{\bt+1} &= (\bt+1) \int\int \eta^2 f^{\bt-1}(u^A\,\pl_t u^A) \\
&= (\bt+1)\int\int \eta^2 f^{\bt-1}u^A \left[ \pl_i(\ul{g}^{ij}\pl_j u^A + \CP^{Aj}_{Bi}\,\pl_j u^B) + \CS^{Aj}_{B}\,\pl_j u^B + \CT^A_B\,u^B + \CR^A \right] ~.
\end{align*}
We estimate each term on the right hand side of the last expression. For the first term, we consider
\begin{align*}
&-\int\int \eta^2 f^{\bt-1}u^A \pl_i(\ul{g}^{ij}\pl_j u^A) \\
=\,& \int\int\pl_i(\eta^2 f^{\bt-1}u^A)\,\ul{g}^{ij}\,\pl_j u^A \\
=\,& \int\int \eta^2 \left[ f^{\bt-1}\ul{g}^{ij}\pl_i u^A\pl_j u^A + {(\bt-1)} f^{\bt-3}\,\ul{g}^{ij}\pl_i(\frac{f^2}{2})\pl_j(\frac{f^2}{2}) \right] + 2\eta\pl_i\eta\ul{g}^{ij}f^{\bt-1} u^A\pl_j u^A \\
\geq\,& \frac{1}{c_1} \int\int \eta^2 \left[ \frac{1}{\bt+1}|\nabla f^{\frac{\bt+1}{2}}|^2 + f^{\bt-1}\sum_A|\nabla u^A|^2 \right] - c_1\int\int |\nabla\eta|^2f^{\bt+1}
\end{align*}
For the second term on the right hand side,
\begin{align*}
& \int\int \eta^2 f^{\bt-1} u^A \pl_i(\CP^{Aj}_{Bi}\,\pl_j u^B) \\
=\, & -\int\int \pl_i(\eta^2 f^{\bt-1} u^A)\,\CP^{Aj}_{Bi}\,\pl_j u^B \\
=\, & -\int\int \eta^2\left[ (\bt-1)f^{\bt-3}\,u^C\,\pl_i u^C\,u^A\,\CP^{Aj}_{Bi}\,\pl_j u^B + f^{\bt-1}\,\pl_i u^A\,\CP^{Aj}_{Bi}\,\pl_j u^B \right] \\
\leq\,& c_2(\bt+1) \int\int\eta^2f^{\bt+1} ~,
\end{align*}
where we have use the uniform boundedness of $\pl_j u^A$.
For the rest terms on the right hand side,
\begin{align*}
& \int\int \eta^2 f^{\bt-1}u^A \left[\CS^{Aj}_{B}\,\pl_j u^B + \CT^A_B\,u^B + \CR^A \right] \\
\leq\, & \frac{1}{c_1}\int\int \eta^2f^{\bt-1}\sum_A|\nabla u^A|^2 + c_3\int\int\eta^2 f^{\bt+1}
\end{align*}

Putting these computations together gives
\begin{align}
\int\int\frac{\pl}{\pl t}(\eta^2f^{\bt+1}) + \int\int|\nabla (\eta f^{\frac{\bt+1}{2}})|^2 &\leq c_4(\bt+1)^2\int\int(\eta^2+|\nabla\eta|^2+\eta\pl_t\eta)\, f^{\bt+1}
\label{Moser03} \end{align}
By definition, there exists $t'\in(T-1,T)$ such that
\begin{align*}
\int_{B_{t'}}\eta^2 f^{\bt+1} &> \oh\sup_{t\in[T-1,T]}\int_{B_t}\eta^2f^{\bt+1} ~.
\end{align*}
By considering \eqref{Moser03} on $B\times[T-1,t']$,
\begin{align*}
\sup_{t\in[T-1,T]}\int_{B_t}\eta^2f^{\bt+1} &\leq 2c_4(\bt+1)^2\int\int_{R_\rho}(\eta^2+|\nabla\eta|^2+\eta\pl_t\eta)\, f^{\bt+1} ~.
\end{align*}
Combining it with \eqref{Moser03} for $R_\rho$ gives
\begin{align*}
\sup_{t\in[T-1,T]}\int_{B_t}\eta^2f^{\bt+1} + \int\int_{D_T}|\nabla (\eta f^{\frac{\bt+1}{2}})|^2 &\leq 3c_4(\bt+1)^2\int\int_{D_T}(\eta^2+|\nabla\eta|^2+\eta\pl_t\eta)\, f^{\bt+1} ~.
\end{align*}

Due to \eqref{Sobolev} and the choice of $\eta$, we find that
\begin{align}
|| f^{\frac{\bt+1}{2}} ||_{L^{2\kp}(R_{\rho'})} &\leq c_5\frac{\bt+1}{\rho-\rho'} ||f^{\frac{\bt+1}{2}}||_{L^2(R_\rho)}
\label{Moser04} \end{align}
for some $\kp>1$.  Equivalently,
\begin{align*}
|| f ||_{L^{\kp(\bt+1)}}(R_{\rho'}) \leq \left(c_5\frac{\bt+1}{\rho-\rho'}\right)^\frac{2}{\bt+1}||f||_{L^{\bt+1}}(R_\rho)
\end{align*}
Let $\rho_n = \oh + 2^{-(n+1)}$ for $n\in\{0,1,2,\ldots\}$.  Denote by $\Phi(n)$ the $L^{2\kp^n}$ norm of $f$ on $R_{\rho_n})$.  It follows that
\begin{align*}
\Phi(n+1) &\leq (c_5\,2^{n+2}\,\kp^n)^{\frac{1}{\kp^n}}\Phi(n) \leq\cdots\leq (4c_5)^{\sum_{j=0}^n\frac{1}{\kp^j}}(2\kp)^{\sum_{j=0}^n\frac{j}{\kp^j}}\Phi(0) ~.
\end{align*}
It follows that the sup-norm of $f$ on $B(\oh)\times[T-1/4,T]$ is bounded by some multiple of its $L^2$ norm on $B(1)\times[T-1,T]$.  Since $\dt>0$ is arbitrary, the $L^2$ norm of $u^A$, $\CR^A$ and $\CP^{Aj}_{Bi}$ are uniformly small, this implies that $u^A$ converges to zero uniformly.

\end{appendix}

\begin{bibdiv}
\begin{biblist}

\bib{ref_AHS}{article}{
   author={Atiyah, M. F.},
   author={Hitchin, N. J.},
   author={Singer, I. M.},
   title={Self-duality in four-dimensional Riemannian geometry},
   journal={Proc. Roy. Soc. London Ser. A},
   volume={362},
   date={1978},
   number={1711},
   pages={425--461},
}

\bib{ref_Baker}{thesis}{
   author={Baker, Charles},
   title={The mean curvature flow of submanifolds of high codimension},
   type={Ph.D. Thesis},
   organization={Australian National University},
   date={2010}, 
}

\bib{ref_BS}{article}{
   author={Bryant, Robert L.},
   author={Salamon, Simon M.},
   title={On the construction of some complete metrics with exceptional holonomy},
   journal={Duke Math. J.},
   volume={58},
   date={1989},
   number={3},
   pages={829--850},
}

\bib{ref_Calabi}{article}{
   author={Calabi, E.},
   title={M\'etriques k\"ahl\'eriennes et fibr\'es holomorphes},
   language={French},
   journal={Ann. Sci. \'Ecole Norm. Sup. (4)},
   volume={12},
   date={1979},
   number={2},
   pages={269--294},
}

\bib{ref_CB}{book}{
   author={Cheeger, Jeff},
   author={Ebin, David G.},
   title={Comparison theorems in Riemannian geometry},
   publisher={North-Holland Publishing Co., Amsterdam-Oxford; American
   Elsevier Publishing Co., Inc., New York},
   date={1975},
   pages={viii+174},
}

\bib{ref_Chen}{book}{
   author={Chen, Bang-Yen},
   title={Geometry of submanifolds and its applications},
   publisher={Science University of Tokyo, Tokyo},
   date={1981},
   pages={iii+96},
}

\bib{ref_Dkk}{article}{
   author={Deckelnick, Klaus},
   title={Parametric mean curvature evolution with a Dirichlet boundary
   condition},
   journal={J. Reine Angew. Math.},
   volume={459},
   date={1995},
   pages={37--60},
}

\bib{ref_doCarmo}{book}{
   author={do Carmo, Manfredo Perdig{\~a}o},
   title={Riemannian geometry},
   series={Mathematics: Theory \& Applications},
   publisher={Birkh\"auser Boston, Inc., Boston, MA},
   date={1992},
   pages={xiv+300},
}

\bib{ref_FUl}{book}{
   author={Freed, Daniel S.},
   author={Uhlenbeck, Karen K.},
   title={Instantons and four-manifolds},
   series={Mathematical Sciences Research Institute Publications},
   volume={1},
   edition={2},
   publisher={Springer-Verlag, New York},
   date={1991},
   pages={xxii+194},
}

\bib{ref_HL}{article}{
   author={Harvey, Reese},
   author={Lawson, H. Blaine, Jr.},
   title={Calibrated geometries},
   journal={Acta Math.},
   volume={148},
   date={1982},
   pages={47--157},
}

\bib{ref_Huisken}{article}{
   author={Huisken, Gerhard},
   title={Asymptotic behavior for singularities of the mean curvature flow},
   journal={J. Differential Geom.},
   volume={31},
   date={1990}
   number={1}, 
   pages={285--299},
}

\bib{ref_Ilmanen}{article}{
   author={Ilmanen, Tom},
   title={Singularities of mean curvature flow of surfaces},
   journal={},
   volume={},
   date={}, 
   number={}, 
   pages={},
	 status={preprint},
}


\bib{ref_Joyce1}{book}{
   author={Joyce, Dominic D.},
   title={Riemannian holonomy groups and calibrated geometry},
   series={Oxford Graduate Texts in Mathematics},
   volume={12},
   publisher={Oxford University Press, Oxford},
   date={2007},
   pages={x+303},
}

\bib{ref_Spiro}{article}{
   author={Karigiannis, Spiro},
   title={Some notes on $G_2$ and ${\rm Spin}(7)$ geometry},
   conference={
      title={Recent advances in geometric analysis},
   },
   book={
      series={Adv. Lect. Math. (ALM)},
      volume={11},
      publisher={Int. Press, Somerville, MA},
   },
   date={2010},
   pages={129--146},
}

\bib{ref_LS}{article}{
	 author={Lotay, Jason D.},
	 author={Schulze, Felix},
	 title={Consequences of strong stability of minimal submanifolds},
	 status={‎to appear in Int. Math. Res. Notices},
	 eprint={arXiv:1802.03941},
	 url={https://arxiv.org/abs/1802.03941},
}

\bib{ref_McLean}{article}{
   author={McLean, Robert C.},
   title={Deformations of calibrated submanifolds},
   journal={Comm. Anal. Geom.},
   volume={6},
   date={1998},
   number={4},
   pages={705--747},
}

\bib{ref_Moser}{article}{
   author={Moser, J\"urgen},
   title={A Harnack inequality for parabolic differential equations},
   journal={Comm. Pure Appl. Math.},
   volume={17},
   date={1964},
   pages={101--134},
}

\bib{ref_Naito}{article}{
   author={Naito, Hisashi},
   title={A stable manifold theorem for the gradient flow of geometric
   variational problems associated with quasi-linear parabolic equations},
   journal={Compositio Math.},
   volume={68},
   date={1988},
   number={2},
   pages={221--239},
}

\bib{ref_Oh}{article}{
   author={Oh, Yong-Geun},
   title={Second variation and stabilities of minimal Lagrangian
   submanifolds in K\"ahler manifolds},
   journal={Invent. Math.},
   volume={101},
   date={1990},
   number={2},
   pages={501--519},
}

\bib{ref_Palais}{book}{
   author={Palais, Richard S.},
   title={Foundations of global non-linear analysis},
   publisher={W. A. Benjamin, Inc., New York-Amsterdam},
   date={1968},
   pages={vii+131},
}

\bib{ref_Simon}{article}{
   author={Simon, Leon},
   title={Asymptotics for a class of nonlinear evolution equations, with applications to geometric problems},
   journal={Ann. of Math. (2)},
   volume={118},
   date={1983},
   number={3},
   pages={525--571},
}

\bib{ref_Simons}{article}{
   author={Simons, James},
   title={Minimal varieties in riemannian manifolds},
   journal={Ann. of Math. (2)},
   volume={88},
   date={1968},
   pages={62--105},
}

\bib{ref_SW}{article}{
   author={Smoczyk, Knut},
   author={Wang, Mu-Tao},
   title={Mean curvature flows of Lagrangians submanifolds with convex potentials},
   journal={J. Differential Geom.},
   volume={62},
   date={2002}
   number={2}, 
   pages={243--257},
}

\bib{ref_St}{article}{
   author={Stenzel, Matthew B.},
   title={Ricci-flat metrics on the complexification of a compact rank one symmetric space},
   journal={Manuscripta Math.},
   volume={80},
   date={1993},
   number={2},
   pages={151--163},
}

\bib{ref_Trudinger}{article}{
   author={Trudinger, Neil S.},
   title={Pointwise estimates and quasilinear parabolic equations},
   journal={Comm. Pure Appl. Math.},
   volume={21},
   date={1968},
   pages={205--226},
}

\bib{ref_TsaiW}{article}{
   author={Tsai, Chung-Jun},
   author={Wang, Mu-Tao},
   title={Mean curvature flows in manifolds of special holonomy},
   journal={J. Differential Geom.},
   volume={108},
   date={2018},
   number={3},
   pages={531--569},
}

\bib{ref_TW}{article}{
   author={Tsui, Mao-Pei},
   author={Wang, Mu-Tao},
   title={Mean curvature flows and isotopy of maps between spheres,}
   journal={Comm. Pure Appl. Math.},
   volume={57},
   date={2004}, 
   number={8}, 
   pages={1110--1126},
}


\bib{ref_W1}{article}{
   author={Wang, Mu-Tao},
   title={Mean curvature flow of surfaces in Einstein four-manifolds},
   journal={J. Differential Geom.},
   volume={57},
   date={2001},
   number={2},
   pages={301--338},
}

\bib{ref_W2}{article}{
   author={Wang, Mu-Tao},
   title={Deforming area preserving diffeomorphism of surfaces by mean curvature flow},
   journal={Math. Res. Lett.},
   volume={8},
   date={2001},
   number={5-6},
   pages={651--661},
}

\bib{ref_W3}{article}{
   author={Wang, Mu-Tao},
   title={Long-time existence and convergence of graphic mean curvature flow in arbitrary codimension},
   journal={Invent. Math.},
   volume={148},
   date={2002},
   number={3},
   pages={525--543},
   issn={0020-9910},
}

\bib{ref_W4}{article}{
   author={Wang, Mu-Tao},
   title={Subsets of Grassmannians preserved by mean curvature flows},
   journal={Comm. Anal. Geom.},
   volume={13},
   date={2005},
   number={5},
   pages={981--998},
}

\end{biblist}
\end{bibdiv}

\end{document}